\documentclass[12pt]{smfart}

\usepackage{amssymb,amsthm,amsmath,amsxtra}
\usepackage[all]{xy}
\usepackage{fullpage}
\usepackage{comment}
\usepackage{hyperref}
\usepackage{color}
\usepackage{graphicx}
\usepackage{booktabs}

\numberwithin{equation}{section}

\theoremstyle{plain}
\newtheorem{thm}[equation]{Theorem}
\newtheorem{prop}[equation]{Proposition}
\newtheorem{lem}[equation]{Lemma}

\newtheorem{ques}[equation]{Question}

\newtheorem*{cor*}{Corollary}
\newtheorem*{prob*}{Problem}
\newtheorem*{thm*}{Theorem}
\newtheorem*{thma*}{Theorem A}
\newtheorem*{thmb*}{Theorem B}
\newtheorem*{thmc*}{Theorem C}

\theoremstyle{remark}
\newtheorem{exm}[equation]{Example}

\newtheorem{rmk}[equation]{Remark}

\newenvironment{enumroman}
{\begin{enumerate}}
{\end{enumerate}}

\setlength{\hfuzz}{4pt}

\DeclareMathOperator{\Aut}{Aut}

\DeclareMathOperator{\Gal}{Gal}
\DeclareMathOperator{\Hol}{Hol}

\DeclareMathOperator{\Mon}{Mon}
\DeclareMathOperator{\N}{N}
\DeclareMathOperator{\opdiv}{div}

\DeclareMathOperator{\PGL}{PGL}
\DeclareMathOperator{\PSL}{PSL}
\DeclareMathOperator{\PSU}{PSU}

\DeclareMathOperator{\SL}{SL}
\DeclareMathOperator{\Sym}{Sym}
\DeclareMathOperator{\GL}{GL}

\newcommand{\C}{\mathbb C}
\newcommand{\F}{\mathbb F}
\newcommand{\PP}{\mathbb P}
\newcommand{\Q}{\mathbb Q}
\newcommand{\R}{\mathbb R}
\newcommand{\Z}{\mathbb Z}
\newcommand{\Qbar}{\overline{\mathbb Q}}

\newcommand{\Belyi}{Bely\u{\i}}

\newcommand{\frakp}{\mathfrak{p}}

\newcommand{\calA}{\mathcal{A}}
\newcommand{\calD}{\mathcal{D}}

\newcommand{\calH}{\mathcal{H}}
\newcommand{\calM}{\mathcal{M}}

\newcommand{\psmod}[1]{~(\textup{\text{mod}}~{#1})}

\newcommand{\la}{\langle}
\newcommand{\ra}{\rangle}

\newcommand{\sbnum}{{}_{\textup{num}}}

\newcommand{\defi}{\textsf}

\begin{document}

\title{On computing Belyi maps}

\author{Jeroen Sijsling}
\address{Mathematics Institute, Zeeman Building, University of Warwick, Coventry
  CV4 7AL, UK}
\email{sijsling@gmail.com}
\author{John Voight}
\address{Department of Mathematics and Statistics, University of Vermont, 16
  Colchester Ave, Burlington, VT 05401, USA; Department of Mathematics,
  Dartmouth College, 6188 Kemeny Hall, Hanover, NH 03755, USA}
\email{jvoight@gmail.com}
\date{\today}

\begin{abstract}
  We survey methods to compute three-point branched covers of the projective
  line, also known as \Belyi\ maps. These methods include a direct approach,
  involving the solution of a system of polynomial equations, as well as complex
  analytic methods, modular forms methods, and $p$-adic methods.  Along the way,
  we pose several questions and provide numerous examples.\\

  Nous donnons un aper\c{c}u des m\'ethodes actuelles pour le calcul des
  rev\^etements de la droite projective ramifi\'es sur au plus trois points, aussi
  appel\'es les morphismes de \Belyi. Ces m\'ethodes comprennent une approche
  directe, qui revient \`a la solution d'un syst\`eme d'\'equations polynomiales,
  ainsi que des m\'ethodes analytiques complexes, m\'ethodes de formes modulaires, et m\'ethodes $p$-adiques. En chemin,
  nous posons quelques questions et donnons de nombreux exemples.
\end{abstract}

\maketitle
\setcounter{tocdepth}{1}

\tableofcontents

\section*{Introduction}

Every compact Riemann surface $X$ is an algebraic curve over $\C$, and every
meromorphic function on $X$ is an algebraic function.  This remarkable fact,
generalized in the GAGA principle, links the analytic with the algebraic in a
fundamental way.  A natural problem is then to link this further with
arithmetic; to characterize those Riemann surfaces that can be defined by
equations over $\Qbar$ and to study the action of the absolute Galois group
$\Gal(\Qbar/\Q)$ on these algebraic curves.  To this end, \Belyi\
\cite{Belyi,Belyi2} proved that a Riemann surface $X$ over $\C$ can be defined
over $\Qbar$ if and only if $X$ admits a \defi{\Belyi\ map}, a map $f:X \to
\PP^1_\C$ that is unramified away from $\{0,1,\infty\}$.  Grothendieck, in his
\emph{Esquisse d'un Programme} \cite{Grothendieck}, called this result ``deep
and disconcerting.''  

Part of Grothendieck's fascination with \Belyi 's theorem was a consequence of
the simple combinatorial and topological characterization that follows from it.
Given a \Belyi\ map $f:X \to \PP^1_\C$, the preimage $f^{-1}([0,1])$ of the real
interval $[0,1]$ can be given the structure of a \defi{dessin} (or \defi{dessin
d'enfant}, ``child's drawing''): a connected graph with bicolored vertices (so
the two vertices of an edge are colored differently) equipped with a cyclic
ordering of the edges around each vertex.  Conversely, a
dessin determines the corresponding \Belyi\ map uniquely up to isomorphism over
$\C$ or $\Qbar$.  The idea that one can understand the complicated group
$\Gal(\Qbar/\Q)$ by looking at children's pictures casts an alluring spell
indeed.  As a consequence, hundreds of papers have been written on the subject,
several books have appeared, and the topic remains an active area of research
with many strands.

In a number of these papers, computation of particular examples plays a key role
in understanding phemonena surrounding \Belyi\ maps; arguably, part of the
richness of the subject lies in the beauty in these examples.  Shabat and
Voevodsky \cite[0.1.1, 0.3]{SV} say on this point: 
\begin{quote}
  Here we have no general theory and only give a number of examples.  The
  completeness of our results decrease rapidly with growing genus; we are able
  to give some complete lists (of non-trivial experimental material) for genus
  $0$, but for genera exceeding $3$ we are able to give only some general
  remarks. [...] The main reasons to publish our results in the present state is
  our eagerness to invite our colleagues into the world of the divine beauty and
  simplicity we have been living in since we have been guided by the Esquisse.
\end{quote}

In spite of this important role, no survey of computational methods for \Belyi\
maps has yet appeared, and in our own calculations we found many techniques,
shortcuts, and some tricks that others had also (re)discovered.  In this
article, we collect these results in one place in the hope that it will be
useful to others working in one of the many subjects that touch the theory of
\Belyi\ maps. We also give many examples; to our knowledge, the larger examples
are new, unless mentioned otherwise.  We assume that the reader has some
familiarity with algebraic curves and with computation, but not necessarily with
the theory of \Belyi\ maps or dessins; at the same time, we hope that this paper
will also be a useful and comprehensive reference, so we will also make some
remarks for the experts.

We take as input to our methods the simple group theoretic description of
\Belyi\ maps: there is a bijection between \defi{permutation triples}
\begin{center}
  $\sigma = (\sigma_0, \sigma_1, \sigma_{\infty}) \in S_d^3$ that
  satisfy $\sigma_0 \sigma_1 \sigma_{\infty} = 1$
\end{center}
up to simultaneous conjugation in the symmetric group $S_d$, and
\begin{center}
  \Belyi\ maps $f:X \to \PP^1$ of degree $d$
\end{center}
up to isomorphism over $\Qbar$.  In this bijection, the curve $X$ can be
disconnected, such as the trivial cover of degree $d > 1$; the cover $X$ is
connected if and only if (the dessin is connected if and only if) the
corresponding permutation triple $\sigma$ generates a transitive subgroup of
$S_d$, in which case we call $\sigma$ \defi{transitive}.  If $\sigma$
corresponds to $f$ in this bijection, we say that $f$ has \defi{monodromy
representation $\sigma$}.

Given the description of a \Belyi\ map $f$ in the compressed form of a
permutation triple, it has proven difficult in general to determine explicitly
an algebraic model for $f$ and the curve $X$.  As a result, many authors have
written on this subject of explicit computation of \Belyi\ maps, usually subject
to certain constraints or within a certain class of examples.  That this is a
difficult problem is a common refrain, and the following quote by Magot and
Zvonkin \cite[\S 1]{MagotZvonkin} is typical:
\begin{quote}
  An explicit computation of a Belyi function corresponding to a given map is
  reduced to a solution of a system of algebraic equations.  It may turn out to
  be extremely difficult.  To give an idea of the level of difficulty, we
  mention that our attempts to compute Belyi functions for some maps with only
  six edges took us several months, and the result was achieved only after using
  some advanced Gr\"obner bases software and numerous consultations given by its
  author J.C. Faug\`ere. 
\end{quote} 

The paper is organized as follows.  In Section 1, we collect the basic
background (including a discussion of fields of definition), and mention some
applications and generalizations.  In Section 2, we discuss a direct method
using Gr\"obner methods, augmented by the Atkin--Swinnerton-Dyer trick.  We then
turn to other, more practical methods.  We begin in Section 3 with complex
analytic methods; in Section 4, we consider methods using modular forms; in
Section 5, we consider $p$-adic methods.  In Section 6, we briefly discuss
alternative methods for Galois \Belyi\ maps.  In Section 7, we discuss the
delicate subjects of field of moduli and field of definition with an eye to its
implications for computation.  In Section 8, we treat simplification and
verification of \Belyi\ maps, and finally in Section 9 we conclude by
considering some further topics and generalizations.  Along the way, we give
explicit examples and pose several questions.  

The authors would like to thank Noam Elkies, Ariyan Javanpeykar, Curtis
McMullen, John McKay, David Roberts, Steffen Rohde, Sam Schiavone, Matthias
Sch\"utt, Marco Streng, Bernd Sturmfels, Mark Watkins and Bruce Westbury for their comments on
this work, as well as the referee for his or her many suggestions.  The first
author was supported by Marie Curie grant IEF-GA-2011-299887, and the second
author was supported by an NSF CAREER Award (DMS-1151047).

\section{Background and applications}\label{sec:backg}

The subject of explicit characterization and computation of ramified covers of
Riemann surfaces is almost as old as Riemann himself. Klein \cite{Klein} and
Fricke--Klein \cite{FrickeKlein} calculated some explicit \Belyi\ maps, most
notably the icosahedral Galois \Belyi\ map $\PP^1 \rightarrow \PP^1$ of degree
$60$ \cite[I, 2, \S 13--14]{Klein}. These appeared when constructing what we
would today call modular functions associated with the triangle groups
$\Delta(2,3,5)$ and $\Delta(2,4,5)$ (see Section \ref{sec:mod}). This in turn
allowed them to find a solution to the quintic equation by using analytic
functions. Around the same time, Hurwitz \cite{Hurwitz} was the first to
consider ramified covers in some generality: besides considering covers of small
degree, he was the first to give the classical combinatorical description of
covers of the projective line minus a finite number of points, which would later
result in Hurwitz spaces being named after him.

Continuing up to the modern day, the existing literature on \Belyi\ maps with an
explicit flavor is extremely rich: surveys include Birch \cite{Birch},
Jones--Singerman \cite{JonesSingerman,JonesSingerman2}, Schneps \cite{Schneps},
and Wolfart \cite{WolfartDessins}; textbooks include the seminal conference
proceedings \cite{Schnepsbook}, work of Malle--Matzat \cite{MalleMatzat}, Serre
\cite{Serre}, and V\"olklein \cite{Volklein}, mainly with an eye toward
applications to inverse Galois theory, the tome on graphs on surfaces by
Lando--Zvonkin \cite{LandoZvonkin}, and the book by Girondo--Gonzalez-Diaz
\cite{GDG}, which interweaves the subject with an introduction to Riemann
surfaces.

We begin this section by reviewing basic definitions; we conclude by mentioning
applications and generalizations as motivation for further study.  (We postpone
some subtle issues concerning fields of moduli and fields of definition until
Section \ref{sec:fomfod}.)

\subsection*{Definitions, and equivalent categories}

Let $K$ be a field with algebraic closure $\overline{K}$.  An \defi{(algebraic)
curve} $X$ over $K$ is a smooth proper separated scheme of finite type over $K$
that is pure of dimension $1$.  

We now define precisely the main category of this paper whose objects we wish to
study. A \defi{\Belyi\ map over $K$} is a morphism $f:X \to \PP^1$ of curves
over $K$ that is unramified outside $\{0,1,\infty\}$. Given two \Belyi\ maps
$f_1,f_2:X_1,X_2 \to \PP^1$, a \defi{morphism} of \Belyi\ maps from $f_1$ to
$f_2$ is a morphism $g : X_1 \to X_2$ such that $f_1 = f_2 g$. We thereby obtain
a category of \Belyi\ maps over $K$. 

A curve $X$ that admits a \Belyi\ map is called a \defi{\Belyi\ curve}.  \Belyi\
\cite{Belyi,Belyi2} proved that a curve $X$ over $\C$ can be defined over
$\Qbar$ if and only if $X$ is a \Belyi\ curve.  Consequently, in what follows,
we may pass freely between \Belyi\ maps over $\Qbar$ and over $\C$: we will
simply refer to both categories as the category of \defi{\Belyi\ maps}.  The
absolute Galois group $\Gal (\Qbar /\Q)$ acts naturally on (the objects and
morphisms in) the category of \Belyi\ maps (over $\Qbar$); this action is
faithful, as one can see by considering the $j$-invariant of elliptic curves.
We denote the action by a superscript on the right, so the conjugate of a curve
$X$ over $\Qbar$ by an automorphism $\tau \in \Gal (\Qbar/\Q)$ is denoted by
$X^{\tau}$, and that of a \Belyi\ map $f$ by $f^{\tau}$.

Let $f:X \to \PP^1$ be a \Belyi\ map of degree $d$.  The ramification of $f$
above $\{0,1,\infty\}$ is recorded in its \defi{ramification type}, the triple
consisting of the set of ramification multiplicities above $0,1,\infty$,
respectively. Such a ramification type is therefore given by a triple of
partitions of $d$, or alternatively by a triple of conjugacy classes in the
symmetric group $S_d$. 

Part of the beauty of subject of \Belyi\ maps is the ability to pass seamlessly
between combinatorics, group theory, algebraic geometry, topology, and complex
analysis: indeed, one can define categories in these domains that are all
equivalent.  In the remainder of this subsection, we make these
categories and equivalences precise; the main result is Proposition
\ref{prop:cateq1} below.

To begin, we record the ramification data, or more precisely the monodromy.  A \defi{permutation triple} of
\defi{degree} $d$ is a triple $\sigma=(\sigma_0 , \sigma_1 ,
\sigma_{\infty}) \in S_d^3$ such that $\sigma_0 \sigma_1 \sigma_{\infty} = 1$.
Let $\sigma'=(\sigma'_0 , \sigma'_1 , \sigma'_{\infty})$ be another such triple
of degree $d'$. Then a \defi{morphism} of permutation triples from $\sigma$ to
$\sigma'$ is a map $t : \left\{ 1 , \dots , d \right\} \to \left\{ 1 , \dots ,
d' \right\}$ such that  $t(\sigma_0(x))=\sigma_0'(t(x))$ for all $x \in S$ and
the same for $\sigma_1,\sigma_\infty$.  In particular, two permutation triples
$\sigma,\sigma'$ are isomorphic, and we write $\sigma \sim \sigma'$ and say they
are \defi{simultaneously conjugate}, if and only if they have the same degree
$d=d'$ and there exists a $\tau \in S_d$ such that 
\begin{align*}
  \sigma^\tau = \tau^{-1}(\sigma_0,\sigma_1,\sigma_{\infty})\tau
  = (\tau^{-1}\sigma_0\tau,\tau^{-1}\sigma_1\tau,\tau^{-1}\sigma_{\infty}\tau)
  = (\sigma'_0,\sigma'_1,\sigma'_{\infty}).
\end{align*}

It is a consequence of the Riemann existence theorem that the category of
\Belyi\ maps is equivalent to the category of permutation triples. More
precisely, let
\begin{equation} \label{eqn:F2}
  F_2 = \langle x,y,z \mid xyz = 1 \rangle
\end{equation}
be the free group on two generators. Given a group $G$, a \defi{finite
$G$-set} is a homomorphism $\alpha:G \to \Sym(S)$ on a finite set $S$, and a
\defi{morphism} between finite $G$-sets from $\alpha$ to $\alpha'$ is a map of
sets $t : S \to S'$ such that $\alpha'(g)(t(x))=t(\alpha(g)(x))$ for all $g \in
G$ and $x \in S$. We see that giving a permutation triple is the same as giving
a finite $F_2$-set, by mapping $x,y,z \in F_2$ to $\sigma_0 , \sigma_1 ,
\sigma_{\infty}$, and that two permutation triples are isomorphic if and only if
the corresponding $F_2$-sets are isomorphic.

Returning to covers and topological considerations, we have an isomorphism 
\begin{align*}
  F_2 \cong \pi_1(\PP^1 \setminus \{0,1,\infty\}) ;
\end{align*}
the generators $x,y,z$ chosen above can be taken to be simple counterclockwise
loops around $0,1,\infty$.  We abbreviate $\PP^1_*=\PP^1 \setminus
\{0,1,\infty\}$.  The category of finite topological covers of $\PP^1_*$ is
equivalent to the category of finite $\pi_1( \PP^1_*)$-sets; to a cover, we
associate one of its fibers, provided with the structure of $\pi_1(
\PP^1_*)$-set defined by path lifting.  Therefore, a \Belyi\ map gives rise to a
cover of $\PP^1_*$ by restriction, and conversely a finite topological cover of
$\PP^1_*$ can be given the structure of Riemann surface by lifting the complex
analytic structure and thereby yields a map from an algebraic curve to $\PP^1$
unramified away from $\{0,1,\infty\}$.

Let $f$ be a \Belyi\ map, corresponding to a permutation triple $\sigma$.  The
corresponding $F_2$-set $\rho : F_2 \rightarrow S_d$ is called the
\defi{monodromy representation} of $f$, and its image is called the
\defi{monodromy group} of $f$. The monodromy group, as a subgroup of $S_d$, is
well-defined up to conjugacy and in particular up to isomorphism, and we denote
it by $G=\Mon (f)$. By the correspondences above, the automorphism group of a
\Belyi\ map is the centralizer of its monodromy group (as a subgroup of $S_d$).

We consider a final category, introduced by Grothendieck \cite{Grothendieck}.  A
\defi{dessin} $D$ is a triple $(\Gamma,C,O)$ where:
\begin{itemize}
  \item[(D1)] $\Gamma$ is a finite graph with vertex set $V$, edge set $E$, and
    vertex map $v: E \to V \times V$;
  \item[(D2)] $C : V \to \left\{ 0,1 \right\}$ is a bicoloring of the vertices
    such that the two vertices of an edge are colored differently, i.e., $C( v
    (e)) = \left\{ 0,1 \right\}$ (and not a proper subset) for all edges $e \in
    E$; and
  \item[(D3)] $O$ is a cyclic orientation of the edges around every vertex.
\end{itemize}
Due to the presence of the bicoloring $C$, the cyclic orientation in (D3) is
specified by two permutations $O_0 , O_1 \in \Sym (E)$ specifying the orderings
around the edges marked with $0$ and $1$ respectively. Note that once the
bicoloring $C$ is given, the possible orientations $O = (O_0 , O_1)$ can be
chosen to be any pair of permutations with the property that two edges $e,e'$
are in the same orbit under $O_0$ (resp.\ $O_1$) if and only if the
corresponding vertices marked $0$ (resp.\ $1$) coincide.  A \defi{morphism} of
dessins is a morphism of graphs $\varphi:\Gamma \to \Gamma'$ such that $\varphi$
takes the bicoloring $C$ to $C'$ (i.e., $C'(\varphi(v))=C(v)$) and similarly the
cyclic orientation $O$ to $O'$.

The category of dessins is also equivalent to that of \Belyi\ maps. Indeed,
associated to a \Belyi\ map $f$ is the graph given by $f^{-1}([0,1])$, with the
bicoloring on the vertices given by $f$ and with the cyclic ordering induced by
the orientation on the Riemann surface. Conversely, given a dessin we can
algebraize the topological covering induced by sewing on $2$-cells as specified
by the ordering $O$.

Dessins were introduced by Grothendieck \cite{Grothendieck} to study the action
of $\Gal (\Qbar/\Q)$ on \Belyi\ maps through combinatorics. So far, progress has
been slow, but we mention one charming result \cite{Schneps}; the Galois action
is already faithful on the dessins that are \defi{trees} (as graphs). 

We summarize the equivalences obtained in the following proposition and refer to
Lenstra \cite{Lenstra} for further exposition and references.

\begin{prop}\label{prop:cateq1}
  The following categories are equivalent:
  \begin{enumroman}
    \item \Belyi\ maps;
    \item permutation triples;
    \item finite $F_2$-sets; and
    \item dessins.
  \end{enumroman}
\end{prop}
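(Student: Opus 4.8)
The plan is to establish a chain of equivalences linking the four categories: two of the links, among the combinatorial categories, are essentially formal bookkeeping (in fact isomorphisms of categories), while the remaining link — with \Belyi\ maps (i) — is the analytic heart of the statement and the one genuine obstacle. Concretely I would prove (ii) $\Leftrightarrow$ (iii), (iv) $\Leftrightarrow$ (ii), and (iii) $\Leftrightarrow$ (i), and then conclude by transitivity.

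First I would dispatch (ii) $\Leftrightarrow$ (iii). A finite $F_2$-set is a homomorphism $\alpha : F_2 \to \Sym(S)$ with $S$ finite, and since $F_2 = \la x,y,z \mid xyz = 1\ra$ is free on $x,y$, such an $\alpha$ is determined freely by the images $\sigma_0 = \alpha(x)$ and $\sigma_1 = \alpha(y)$, with $\sigma_\infty = \alpha(z) = (\sigma_0\sigma_1)^{-1}$ forced by the relation. After choosing a bijection $S \cong \{1,\dots,d\}$ this is exactly a permutation triple, and the two notions of morphism coincide: a map $t$ intertwines the action of all of $F_2$ if and only if it does so for the generators $x,y,z$, that is, for $\sigma_0,\sigma_1,\sigma_\infty$. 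Thus the assignment is an isomorphism of categories.

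Next I would treat (iv) $\Leftrightarrow$ (ii). The content of (D1)--(D3) is that, once the bicoloring is fixed, a dessin is precisely the datum of a finite edge set $E$ together with the two permutations $O_0, O_1 \in \Sym(E)$ recording the cyclic orderings, the $0$- and $1$-colored vertices being recovered as the cycles of $O_0$ and $O_1$ respectively. Sending a dessin to the permutation triple $(O_0, O_1, (O_0 O_1)^{-1})$ on $E$, and conversely building the bicolored graph on edge set $\{1,\dots,d\}$ whose colored vertices are the cycles of $\sigma_0$ and $\sigma_1$, gives mutually inverse constructions. A morphism of dessins is a color- and orientation-preserving graph morphism, which on edge sets is exactly a map commuting with $O_0$ and $O_1$; so morphisms again match and this too is an isomorphism of categories.

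The crux is the equivalence of (iii) with \Belyi\ maps (i), which rests on the Riemann existence theorem. Fixing the isomorphism $F_2 \cong \pi_1(\PP^1_*)$ under which $x,y,z$ are the three counterclockwise loops, the Galois theory of covering spaces gives an equivalence between finite topological covers of $\PP^1_*$ and finite $\pi_1(\PP^1_*)$-sets, hence finite $F_2$-sets. It then remains to identify finite covers of $\PP^1_*$ with \Belyi\ maps: restriction of a \Belyi\ map $f : X \to \PP^1$ to $f^{-1}(\PP^1_*) \to \PP^1_*$ is such a cover, and for the converse one transports the complex-analytic structure along a finite cover $Y \to \PP^1_*$, fills in the finitely many punctures over $0,1,\infty$ (with the cycle types of the local monodromy prescribing the ramification multiplicities) to obtain a compact Riemann surface $X \supseteq Y$, and invokes GAGA to realize $X \to \PP^1$ as a morphism of algebraic curves over $\C$ unramified outside $\{0,1,\infty\}$; \Belyi's theorem then descends this from $\C$ to $\Qbar$, matching the convention fixed earlier. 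The main difficulties to handle with care are the local analysis at the punctures — checking that the cover extends uniquely across each removed point, with sheets glued according to the monodromy cycles — and the verification that all of these constructions are functorial and mutually quasi-inverse up to natural isomorphism, so that one obtains genuine equivalences of categories rather than mere bijections on isomorphism classes.
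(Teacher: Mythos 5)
Your proof is correct, and two of its three legs --- the formal identification of permutation triples with finite $F_2$-sets, and the appeal to covering-space theory plus the Riemann existence theorem and \Belyi's theorem for the link with \Belyi\ maps --- are exactly the paper's argument. Where you diverge is in the handling of dessins: the paper connects (iv) directly to (i), associating to a \Belyi\ map the bicolored graph $f^{-1}([0,1])$ with cyclic orderings induced by the orientation of the Riemann surface, and conversely recovering the topological cover from a dessin by sewing on $2$-cells as prescribed by $O$; you instead connect (iv) to (ii) purely combinatorially, reading off $(O_0,O_1,(O_0O_1)^{-1})$ from the orientation data and recovering the colored vertices as the cycles of $O_0$ and $O_1$. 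Your route is more elementary --- it turns the dessins leg into a second piece of bookkeeping and concentrates all of the topology in the single link (iii) $\Leftrightarrow$ (i) --- whereas the paper's route is what justifies the picture of the dessin as a graph actually drawn on the curve $X$, which is how one reads monodromy off a drawing in practice; both are standard and both succeed. One cosmetic caveat: passing from a finite $F_2$-set on an abstract set $S$ to a permutation triple requires a choice of bijection $S \cong \{1,\dots,d\}$, so the functor is an equivalence rather than an isomorphism of categories (and similarly for the dessins leg); this does not affect the statement.
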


In particular, the equivalence in Proposition \ref{prop:cateq1} yields the key
bijection considered in this paper:
\begin{equation} \label{eq:corresp}
  \begin{gathered} 
    \left\{ \text{permutation triples $\sigma = (\sigma_0, \sigma_1,
      \sigma_{\infty}) \in S_d^3$} \right\} /
      \sim
\\    \updownarrow \text{\small{1:1}} \\
    \left\{ \text{\Belyi\ maps $f:X \to \PP^1$ of degree $d$} \right\} /
      \cong_{\Qbar}
  \end{gathered}
\end{equation}
where the notions of isomorphism are taken in the appropriate categories.
Concretely, under the correspondence \eqref{eq:corresp}, the cycles of the
permutation $\sigma_0$ (resp.\ $\sigma_1,\sigma_{\infty}$) correspond to the
points of $X$ above $0$ (resp.\ $1,\infty$) and the length of the cycle
corresponds to the ramification index of the corresponding point under the morphism $f$.  Note in
particular that because the first set of equivalence classes in
\eqref{eq:corresp} is evidently finite, there are only finitely many
$\Qbar$-isomorphism classes of curves $X$ with a \Belyi\ map of given degree.  


It is often useful, and certainly more intuitive, to consider the subcategories
in Proposition \ref{prop:cateq1} that correspond to \Belyi\ maps $f : X \to
\PP^1$ whose source is connected (and accordingly, we say the map is
\defi{connected}).  A \Belyi\ map is connected if and only if the corresponding
permutation triple $\sigma$ is \defi{transitive}, i.e., the subgroup $\langle
\sigma_0,\sigma_1,\sigma_{\infty} \rangle$ is a transitive group.  Restricting
to transitive permutations gives a further equivalent category of finite index
subgroups of $F_2$: the objects are subgroups $H \leq F_2$ of finite index and
morphisms $H \to H'$ are restrictions of inner automorphisms of $F_2$ that map
$H$ to $H'$.  The category of finite index subgroups of $F_2$ is equivalent to
that of finite transitive $F_2$-sets (to a subgroup $H$ of $F_2$, one associates
the $F_2$-set $F_2 / H$).  Proposition \ref{prop:cateq1} now becomes the
following.

\begin{prop}\label{prop:cateq2}
  The following categories are equivalent:
  \begin{enumerate}
    \item[(i)] connected \Belyi\ maps;
    \item[(ii)] transitive permutation triples;
    \item[(iii)] transitive finite $F_2$-sets;
    \item[(iii$'$)] subgroups of $F_2$ of finite index; and
    \item[(iv)] dessins whose underlying graph is connected.
  \end{enumerate}
\end{prop}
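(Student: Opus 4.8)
The plan is to build on Proposition \ref{prop:cateq1}, which already establishes the four equivalences in the disconnected case, and to verify that each of those equivalences restricts compatibly to the connected objects singled out in (i), (ii), (iii), and (iv). Since the equivalences of Proposition \ref{prop:cateq1} are already in hand, the essential content is twofold: first, that connectedness is preserved by each functor (so that the functors restrict to the stated subcategories); and second, the genuinely new item (iii$'$), namely that finite-index subgroups of $F_2$ form a category equivalent to transitive finite $F_2$-sets. I would treat these two points in turn.

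**First I would** check that the notions of connectedness match up across (i)--(iv). A \Belyi\ map $f:X\to\PP^1$ has connected source exactly when $X$ is connected as a Riemann surface; under the fiber functor, path-lifting shows the monodromy acts transitively on a fiber precisely when one cannot separate the cover into two nonempty pieces, which is transitivity of $\langle\sigma_0,\sigma_1,\sigma_\infty\rangle$, giving (i)$\Leftrightarrow$(ii). Since a permutation triple is the same datum as a finite $F_2$-set (via $x,y,z\mapsto\sigma_0,\sigma_1,\sigma_\infty$), transitivity of the triple is by definition transitivity of the $F_2$-action, yielding (ii)$\Leftrightarrow$(iii). For (iv), the underlying graph $f^{-1}([0,1])$ is connected if and only if $X\setminus f^{-1}(\infty)$ is, and one argues this is equivalent to connectedness of $X$; conversely the sewing-on-$2$-cells construction shows a connected dessin algebraizes to a connected cover. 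In each case I would note that a morphism in the connected subcategory is just a morphism in the ambient category whose objects happen to be connected, so no separate check on morphisms is needed beyond observing that the functors carry connected objects to connected objects.

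**The new ingredient** is (iii$'$), and I would establish (iii)$\Leftrightarrow$(iii$'$) directly via the standard dictionary between transitive $G$-sets and subgroups. Given a transitive finite $F_2$-set $S$, pick a basepoint $s_0\in S$ and let $H=\mathrm{Stab}(s_0)\leq F_2$; transitivity forces $[F_2:H]=\#S<\infty$, and the orbit map $F_2/H\to S$ is an isomorphism of $F_2$-sets. Conversely a finite-index subgroup $H$ gives the transitive $F_2$-set $F_2/H$. The subtle point, and the one I would handle most carefully, is the matching of morphisms: a morphism $F_2/H\to F_2/H'$ of $F_2$-sets must send the coset $H$ to some coset $gH'$, and $F_2$-equivariance forces $g^{-1}Hg\subseteq H'$, so that the morphism is conjugation by $g$ restricted to cosets; this is exactly the description of $\Hom(H,H')$ as restrictions of inner automorphisms of $F_2$ carrying $H$ into $H'$, as stated just before the proposition. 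One must verify this correspondence respects composition and identities, and that changing the basepoint $s_0$ corresponds to conjugating $H$, so the equivalence is well-defined at the level of categories (not merely isomorphism classes).

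**The main obstacle** I anticipate is bookkeeping rather than conceptual depth: the morphisms in (iii$'$) are defined only up to the inner-automorphism ambiguity, and one must check that this ambiguity is precisely what equivariant maps of $F_2$-sets see, so that the fiber functor $H\mapsto F_2/H$ is fully faithful and essentially surjective onto transitive objects. Concretely I would spell out that two elements $g,g'$ with $g^{-1}Hg,\,g'^{-1}Hg'\subseteq H'$ induce the same morphism $F_2/H\to F_2/H'$ exactly when $g'\in Hg$, matching the left-coset count $[H\backslash\{g:g^{-1}Hg\subseteq H'\}]$ with $\#\Hom_{F_2\text{-set}}(F_2/H,F_2/H')$. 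Everything else reduces to the already-proved Proposition \ref{prop:cateq1} together with the elementary observation that each equivalence is connectedness-preserving, so I would present the proof as a short verification rather than a from-scratch argument.
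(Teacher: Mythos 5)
Your proposal is correct and follows essentially the same route as the paper, which offers no proof beyond observing that Proposition \ref{prop:cateq1} restricts to connected/transitive objects and that $H \mapsto F_2/H$ gives the equivalence between (iii$'$) and (iii). One small slip in your bookkeeping for the morphism count: two elements $g,g'$ with $g^{-1}Hg,\ g'^{-1}Hg' \subseteq H'$ induce the same $F_2$-equivariant map $F_2/H \to F_2/H'$ exactly when $g' \in gH'$, not when $g' \in Hg$; since $Hg \subseteq gH'$ under the hypothesis $g^{-1}Hg \subseteq H'$, your condition is sufficient but not necessary, and the Hom-set is indexed by the right cosets $\{gH' : g^{-1}Hg \subseteq H'\}$ rather than by left cosets of $H$.
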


Unless stated otherwise (e.g., Section \ref{sec:fomfod}), in the rest of this
article we will assume without further mention that a \Belyi\ map is
\emph{connected}; this is no loss of generality, since any disconnected \Belyi\
map is the disjoint union of its connected components.  

\subsection*{Geometric properties and invariants}

Let $f:X \to \PP^1$ be a (connected) \Belyi\ map over $\Qbar$. If the cover $f$
is \defi{Galois}, which is to say that the corresponding extension of function
fields $\Qbar (X) / \Qbar (\PP^1)$ is Galois, then we call $f$ a \defi{Galois
\Belyi\ map}. More geometrically, this property boils down to the demand that a
subgroup of $\Aut (X)$ act transitively on the sheets of the cover; and
combinatorially, this is nothing but saying that $\Mon (f) \subseteq S_d$ has
cardinality $\#\Mon(f)=d$.  Indeed, the monodromy group of a \Belyi\ map can
also be characterized as the Galois group of its \defi{Galois closure}, which is
the smallest Galois cover of which it is a quotient.

The genus of $X$ can be calculated by using the Riemann--Hurwitz formula.  If we
define the \defi{excess} $e(\tau)$ of a cycle $\tau \in S_d$ to be its length
minus one, and the excess $e(\sigma)$ of a permutation to be the sum of the
excesses of its constituent disjoint cycles (also known as the \defi{index} of
the permutation, equal to $n$ minus the number of orbits), then the genus of a
\Belyi\ map of degree $n$ with monodromy $\sigma$ is
\begin{equation} \label{eq:genus}
  g = 1-n + \frac{e(\sigma_0) + e(\sigma_1) + e(\sigma_{\infty})}{2}.
\end{equation}
In particular, we see that the genus of  \Belyi\ map is zero if and only if
$e(\sigma_0) + e(\sigma_1) + e(\sigma_{\infty}) = 2n - 2$.

We employ exponential notation to specify both ramification types and conjugacy
classes in $S_d$. So for example, if $d = 10$, then $3^2 2^1 1^2$ denotes both
the conjugacy class of the permutation $(1\ 2\ 3)(4\ 5)(6\ 7\ 8)$ and the
corresponding ramification type; two points of ramification index $3$, one of
index $2$, and two (unramified) of index $1$.

The \defi{passport} of a \Belyi\ map $f:X \to \PP^1$ is the triple $(g,G,C)$
where $g$ is the genus of $X$ and $G \subseteq S_d$ is the monodromy group of
$f$, and $C = (C_0 , C_1, C_{\infty})$ is the triple of conjugacy classes of
$(\sigma_0 , \sigma_1 , \sigma_{\infty})$ in $S_d$, respectively
\cite[Definition 1.1.7]{LandoZvonkin}.  Although the genus of the \Belyi\ map is
determined by the conjugacy classes by equation \eqref{eq:genus}, we still
include it in the passport for clarity and ease.  The \defi{size} of a passport
$(g,G,C)$ is the number of equivalence classes of triples $\sigma =
(\sigma_0,\sigma_1,\sigma_{\infty})$ such that $\langle \sigma \ra = G$ and
$\sigma_i \in C_i$ for $i=0,1,\infty$.

We will occasionally need slightly altered notions of passport. The
\defi{ramification passport} of $f$ is the pair $(g,C)$ with conjugacy classes
in $S_d$. Another version of the passport will be considered in Section
\ref{sec:fomfod}. The passport has the following invariance property
\cite{JonesStreit}.

\begin{thm}\label{thm:galinv}
  The passport and the ramification passport of a \Belyi\ map are invariant
  under the action of $\Gal (\Qbar/\Q)$.
\end{thm}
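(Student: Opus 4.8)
The plan is to establish the Galois-invariance by showing that each of the three ingredients of the passport---the genus $g$, the monodromy group $G$, and the triple of conjugacy classes $C = (C_0, C_1, C_\infty)$---is preserved under the action of any $\tau \in \Gal(\Qbar/\Q)$. Since the ramification passport $(g, C)$ is just a coarsening of the full passport, its invariance will follow immediately once the full passport is handled. Throughout, I fix a connected \Belyi\ map $f : X \to \PP^1$ over $\Qbar$ and its conjugate $f^\tau : X^\tau \to \PP^1$; note that $\tau$ fixes $\PP^1$ and the three branch points $\{0,1,\infty\}$, since these are all defined over $\Q$.

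First I would dispose of the genus. Conjugation by $\tau$ is an isomorphism of schemes $X \to X^\tau$ (compatible with structure maps to $\Spec \Qbar$ via $\tau$), so it preserves all geometric invariants computed from coherent cohomology; in particular $g(X^\tau) = g(X)$. Equivalently, one may invoke formula \eqref{eq:genus}: once the invariance of the conjugacy classes is known, the genus is determined by the excesses $e(\sigma_i)$, which depend only on the cycle types, and hence is automatically preserved. Next, for the degree and the local ramification data, the key point is that $f^\tau$ again has degree $d$ and is unramified outside $\{0,1,\infty\}$, because ramification is detected by the vanishing of a different (or by nonseparability of residue extensions), and these conditions are algebraic and thus commute with the Galois action. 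The crux is then to compare the ramification above each branch point $P \in \{0,1,\infty\}$: the fiber $f^{-1}(P)$ consists of finitely many closed points with multiplicities, and $\tau$ carries this fiber bijectively onto $(f^\tau)^{-1}(P)$, preserving each ramification index because the index can be read off from the valuations in the complete local rings, which are intertwined by the isomorphism induced by $\tau$. Consequently the partition of $d$ recording the ramification type above $P$ is unchanged, which says exactly that the conjugacy class $C_P$ in $S_d$ is preserved.

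The main obstacle, and the part requiring the most care, is the invariance of the monodromy group $G = \Mon(f)$ as a \emph{subgroup of $S_d$ up to conjugacy}, rather than merely as an abstract group. The difficulty is that the monodromy representation $\rho : F_2 \to S_d$ is defined topologically, via path-lifting in $\pi_1(\PP^1_*)$ over $\C$, and this construction has no visible compatibility with the purely algebraic action of $\tau$. To bridge this gap I would pass through the algebraic incarnation of the monodromy group: by the discussion preceding Proposition \ref{prop:cateq1}, $\Mon(f)$ is the Galois group of the Galois closure $\widetilde{f}$ of $f$, i.e.\ the automorphism group of the splitting field of $\Qbar(X)/\Qbar(\PP^1)$, acting on the $d$ embeddings of $\Qbar(X)$ over $\Qbar(\PP^1)$. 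This description is entirely algebraic, and the isomorphism induced by $\tau$ on function fields carries the Galois closure of $f$ to that of $f^\tau$ compatibly with the permutation action on the $d$ sheets; hence $\Mon(f^\tau) \cong \Mon(f)$ as permutation groups, i.e.\ as subgroups of $S_d$ up to conjugacy.

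Assembling these three steps---the invariance of $g$, of the permutation conjugacy classes $C_0, C_1, C_\infty$, and of the monodromy group $G$ as a permutation group---yields that the entire passport $(g, G, C)$ is fixed by $\tau$, and therefore by all of $\Gal(\Qbar/\Q)$. The ramification passport $(g, C)$ is the image of $(g,G,C)$ under forgetting $G$, so its invariance is an immediate corollary. One practical simplification worth noting is that, via the equivalence of categories in Proposition \ref{prop:cateq1}, the Galois action on \Belyi\ maps transports to an action on permutation triples up to simultaneous conjugation; from this viewpoint the theorem asserts that this combinatorial action preserves the cycle types of $\sigma_0, \sigma_1, \sigma_\infty$ and the isomorphism type of the generated subgroup---so the heart of the matter is precisely the comparison, carried out above, between the analytic and algebraic definitions of the monodromy.
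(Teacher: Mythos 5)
Your proposal is correct. Note that the paper itself gives no proof of Theorem \ref{thm:galinv}: it simply cites Jones--Streit \cite{JonesStreit}, and your argument is essentially the standard one from that reference. The three ingredients are exactly right: (i) the genus is a geometric invariant preserved by base change along $\tau$; (ii) since $0,1,\infty$ are rational and hence fixed by $\tau$, the fiber of $f^\tau$ over each branch point is the $\tau$-conjugate of the fiber of $f$ with the same ramification indices, so the partitions of $d$ --- and hence the conjugacy classes in $S_d$, which are determined by cycle type --- are unchanged; (iii) the monodromy group, reinterpreted algebraically as the Galois group of the Galois closure of $\Qbar(X)/\Qbar(\PP^1)$ acting on the $d$ embeddings (an identification the paper itself records just before the dessin discussion), is manifestly carried to the corresponding object for $f^\tau$ as a permutation group up to conjugacy. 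You correctly identify (iii) as the only point requiring care, since the topological definition of monodromy via path lifting has no direct Galois equivariance. One remark worth appending: the argument in (ii) works precisely because the passport takes conjugacy classes in $S_d$; for the \emph{refined} passport of Section \ref{sec:fomfod}, where classes are taken in $G$ itself, invariance genuinely fails and the classes are instead permuted by the cyclotomic powering operation of Fried, so your proof does not (and should not) extend to that setting.
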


One can calculate the set of isomorphism classes of permutation triples with
given passport using the following lemma with $G = S_d$.

\begin{lem}\label{lem:doubcos}
  Let $G$ be a group and let $C_0,C_1$ be conjugacy classes in $G$ represented
  by $\tau_0,\tau_1 \in G$. Then the map
\begin{align*}
    C_G (\tau_0) \backslash G / C_G (\tau_1) &\to \{ (\sigma_0,\sigma_1) : 
    \sigma_0 \in C_0 , \sigma_1 \in C_1 \} /\! \sim_G \\
    C_G(\tau_0) g C_G(\tau_1) &\mapsto (\tau_0, g \tau_1 g^{-1}) \nonumber
  \end{align*}
  is a bijection, where $C_G (\tau)$ denotes the centralizer of $\tau$ in $G$ and $\sim_G$ denotes
  simultaneous conjugation in $G$.
\end{lem}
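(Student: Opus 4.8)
The plan is to verify directly that the stated map is well defined and then to establish injectivity and surjectivity by elementary manipulations in $G$, exploiting throughout the single structural fact that conjugating $\tau_i$ by an element of its own centralizer $C_G(\tau_i)$ leaves it fixed. Since the source of the map is a set of \emph{double} cosets, the first step is to confirm that the class of the image pair does not depend on the chosen representative $g$.

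For well-definedness, I would write a general element of the double coset of $g$ as $g' = c_0 g c_1$ with $c_0 \in C_G(\tau_0)$ and $c_1 \in C_G(\tau_1)$, and compute
\begin{equation*}
  g' \tau_1 (g')^{-1} = c_0 g (c_1 \tau_1 c_1^{-1}) g^{-1} c_0^{-1} = c_0 (g \tau_1 g^{-1}) c_0^{-1},
\end{equation*}
using that $c_1$ centralizes $\tau_1$. Combined with $c_0 \tau_0 c_0^{-1} = \tau_0$, this shows that conjugation by $c_0$ carries $(\tau_0, g\tau_1 g^{-1})$ to $(\tau_0, g'\tau_1 (g')^{-1})$, so the two pairs lie in the same $\sim_G$-class and the map is well defined.

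For surjectivity, given any pair $(\sigma_0,\sigma_1)$ with $\sigma_0 \in C_0$ and $\sigma_1 \in C_1$, I would use that $\sigma_0$ is conjugate to $\tau_0$ to replace the pair, within its $\sim_G$-class, by one of the form $(\tau_0, \sigma_1')$ with $\sigma_1' \in C_1$; then $\sigma_1' = g \tau_1 g^{-1}$ for some $g \in G$, exhibiting the class as the image of the double coset of $g$. For injectivity, suppose $(\tau_0, g\tau_1 g^{-1})$ and $(\tau_0, g'\tau_1 (g')^{-1})$ are conjugate by some $h \in G$. The requirement $h \tau_0 h^{-1} = \tau_0$ forces $h \in C_G(\tau_0)$, while $h(g\tau_1 g^{-1})h^{-1} = g'\tau_1 (g')^{-1}$ rearranges to $(g')^{-1} h g \in C_G(\tau_1)$; setting $c_1 = (g')^{-1}hg$ gives $g' = h g c_1^{-1}$ with $h \in C_G(\tau_0)$ and $c_1^{-1} \in C_G(\tau_1)$, so $g$ and $g'$ determine the same double coset.

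No genuine obstacle arises here: the argument is pure bookkeeping in $G$, and the only point demanding a little care is keeping straight, in each of the three verifications, which of the two centralizer factors acts on which coordinate of the pair.
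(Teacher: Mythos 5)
Your proof is correct in all three steps (well-definedness, surjectivity, and injectivity), and the verifications are carried out accurately; the paper itself states Lemma \ref{lem:doubcos} without proof, and your argument is precisely the standard one that is being taken for granted there. Nothing is missing.
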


The virtue of this lemma is that double-coset methods in group theory are quite
efficient; by using this bijection and filtering appropriately \cite[Lemma
1.11]{KMSV}, this allows us to enumerate \Belyi\ maps with a given passport
relatively quickly up to moderate degree $d$.  One can also estimate the size of
a passport using character theory; for more on this, see Section
\ref{sec:fomfod}.

\subsection*{Applications}

Having introduced the basic theory, we now mention some applications of the
explicit computation of \Belyi\ maps.  

We began in the introduction with the motivation to uncover the mysterious
nature of the action of $\Gal(\Qbar/\Q)$ on dessins following Grothendieck's
\emph{Esquisse}. Dessins of small degree tend to be determined by their passport
in the sense that the set of dessins with given passport forms a full Galois
orbit.  However, even refined notions of passport do not suffice to distinguish
Galois orbits of dessins of high degree in general: a first example was
\defi{Schneps' flower} \cite[\S IV, Example I]{Schneps}. Some further examples
of distinguishing features of non-full Galois orbits have been found by Wood
\cite{Wood} and Zapponi \cite{Zapponi}, but it remains a challenge to determine
the Galois structure for the set of dessins with given passport. Even statistics
in small degree are not known yet; an important project remains to construct
full libraries of dessins.  The original ``flipbook'' of dessins, due to
B\'etr\'ema--P\'er\'e--Zvonkin \cite{BPZ}, contained only dessins that were
plane trees but was already quite influential, and consequently systematic
tabulation promises to be just as inspiring.

Further applications of the explicit study of \Belyi\ maps have been found in
inverse Galois theory, specifically the regular realization of Galois groups
over small number fields: see the tomes of Matzat \cite{Matzat}, Malle--Matzat
\cite{MalleMatzat}, and Jensen--Ledet--Yui \cite{JensenLedetYui}.  Upon
specialization, one obtains Galois number fields with small ramification set:
Roberts \cite{Roberts1,Roberts2,Robertsx}, Malle--Roberts \cite{MalleRoberts},
and Jones--Roberts \cite{JonesRoberts} have used the specialization of
three-point covers to exhibit number fields with small ramification set or root
discriminant. The covering curves obtained are often interesting in their own
right, spurring further investigation in the study of low genus curves (e.g.,
the decomposition of their Jacobian \cite{Paulhus}).  Finally, a \Belyi\ map
$f:\PP^1 \to \PP^1$, after precomposing so that $\{0,1,\infty\} \subseteq
f^{-1}(\{0,1,\infty\})$, is an example of a rigid \defi{post-critically finite
map}, a map of the sphere all of whose critical points have finite orbits.
(Zvonkin calls these maps \defi{dynamical \Belyi\ functions} \cite[\S
6]{Zvonkin}.)  These maps are objects of central study in complex dynamics
\cite{Bartholdi,Pilgrim}: one may study the associated Fatou and Julia sets.  

\Belyi\ maps also figure in the study of \defi{Hall polynomials}, (also called
\defi{Davenport-Stothers triples}) which are those coprime solutions $X(t),Y(t),Z(t) \in
\C[t]$ of the equations in polynomials 
\begin{align*}
  X(t)^3 - Y(t)^2 = Z(t)
\end{align*}
with $\deg(X(t)) = 2 m$, $\deg (Y(t)) = 3 m$ and $\deg (Z(t)) = m + 1$. These
solutions are extremal in the degree of $Z$ and are analogues of \defi{Hall
triples}, i.e.\ integers $x , y \in \Z$ for which $|x^3 - y^2| =O(\sqrt{|x|})$.
Hall polynomials have been studied by Watkins \cite{Watkins} and by
Beukers--Stewart \cite{BeukersStewart}; Montanus \cite{Montanus} uses the
link with dessins ($X^3 (t) / Y^2 (t)$ is a \Belyi\ map) to find a formula for
the number of Hall polynomials of given degree.  Hall polynomials also lead to
some good families of classical Hall triples \cite{ElkiesWatkins}, as 
the following example illustrates.

\begin{exm}
  Taking $m = 5$ above, one obtains the following Hall polynomials due to Birch:
  \begin{align*}
    X(t) & = \frac{1}{9} (t^{10} +6 t^7 +15 t^4 + 12 t)  , \\
    Y(t) & =
    \frac{1}{54} (2 t^{15} + 18 t^{12} + 72 t^9 + 144 t^6 + 135 t^3 + 27)  , \\
    Z(t) & = -\frac{1}{108} (3 t^6 + 14 t^3 + 27).
  \end{align*}

  Choosing $t \equiv 3$ mod $6$, we get some decent Hall triples, notably
  \begin{align*}
    |384242766^3 - 7531969451458^2| &= 14668 \\
    |390620082^3 - 7720258643465^2| &= 14857 
  \end{align*}
  for $t=\pm 9$; remarkably, in both cases the constant factor $|x^3 -
  y^2| / \sqrt{|x|}$ is approximately equal to the tiny number $3/4$.
\end{exm}

\Belyi\ maps also give rise to interesting algebraic surfaces.  The \Belyi\ maps
of genus $0$ and degree $12$ (resp.\ $24$) with ramification indices above $0,1$
all equal to $3,2$ correspond to elliptic fibrations of rational (resp.\ K3)
surfaces with only $4$ (resp.\ $6$) singular fibers; given such a fibration, the
associated \Belyi\ map is given by taking its $j$-invariant. By work of
Beauville \cite{Beauville} (resp.\ Miranda and Persson \cite{MP}), there are $6$
(resp.\ $112$) possible fiber types for these families. This result comes down
to calculating the number of \Belyi\ maps of given degrees with specified conjugacy
classes with cycle type $(3, \dots , 3)$ and $(2, \dots , 2)$ for $\sigma_0$ and
$\sigma_1$.

Especially in the degree $24$ case, the explicit calculation of these \Belyi\ maps is
quite a challenge. By developing clever methods specific to this case, this
calculation was accomplished by Beukers--Montanus \cite{BeukersMontanus}.  They
find $191$ \Belyi\ maps, exceeding the $112$ ramification types determined by Miranda
and Persson: this is an instance of the phenomenon mentioned above, that the
passport may contain more than one \Belyi\ map, so that to a given ramification
triple there may correspond multiple isomorphism classes of \Belyi\ maps.

One can also specialize \Belyi\ maps to obtain $abc$ triples: this connection is
discussed by Elkies \cite{ElkiesABC} and Frankenhuysen \cite{Frankenhuysen} to
show that the $abc$ conjecture implies the theorem of Faltings, and it is also
considered by van Hoeij--Vidunas \cite[Appendix D]{vHV2}.

Modular curves and certain Shimura curves possess a natural \Belyi\ map.
Indeed, Elkies has computed equations for Shimura curves in many cases using
only the extant structure of a \Belyi\ map \cite{ElkiesSCC,Elkies237}. Another
such computation was made by Hallouin in \cite{HallouinComputation}, where a
more elaborate argument using Hurwitz spaces of four-point covers is used.
Explicit equations are useful in many contexts, ranging from the resolution of
Diophantine equations to cryptography \cite{Schoof}.  Reducing these equations
modulo a prime also yields towers of modular curves that are useful in coding
theory.  Over finite fields of square cardinality $q$, work of Ihara
\cite{Ihara} and Tsfasman--Vl\u{a}du\c{t}--Zink \cite{TVZ} shows that modular
curves have enough \defi{supersingular points} that their total number of
rational points is asymptotic with $(\sqrt{q}-1)g$ as their genus grows; this is
asymptotically optimal by work of Drinfeld--Vl\u{a}du\c{t}
\cite{DrinfeldVladut}. By a construction due to Goppa \cite{Goppa}, one obtains
the asympotically best linear error-correcting codes known over square fields.
But to construct and use these codes we need explicit equations for the curves
involved.  A few of these modular towers were constructed by Elkies
\cite{ElkiesModularTowers}.  There are extensions to other arithmetic triangle
towers, using the theory of Shimura curves, which give other results over prime
power fields of larger exponent \cite{DucetThesis}.  For the cocompact triangle
quotients, the modular covers involved are \Belyi\ maps, and in fact many
congruence towers are unramified (and cyclic) after a certain point, which makes
them particularly pleasant to work with.

There are also applications of explicit \Belyi\ maps to algebraic solutions of
differential equations \cite{LarussonSadykov}: as we will see in Section
\ref{sec:mod}, subgroups of finite index of triangle groups correspond to
certain \Belyi\ maps, and the uniformizing differential equations for these
groups (resp.\ their solutions) can be obtained by pulling back suitable
hypergeometric differential equations (resp.\ hypergeometric functions).  Kitaev
\cite{Kitaev} and Vidunas--Kitaev \cite{VK} consider branched covers at $4$
points with all ramification but one occuring above three points (``almost
\Belyi\ coverings'') and apply this to algebraic Painlev\'e VI functions.
Vidunas--Filipuk \cite{VidFil} classify coverings yielding transformations
relating the classical hypergeometric equation to the Heun differential
equation; these were computed by van Hoeij--Vidunas \cite{vHV2,vHV}.  

There are applications to areas farther from number theory. Eyral--Oka
\cite{EyralOka} explicitly use dessins (and their generalizations to covers of
$\PP^1$ branched over more than $3$ points) in their classification of the
fundamental groups of the complement in the projective plane of certain
join-type sextic curves of the form $a \prod_i (X - \alpha_i Z) = b \prod_j (X -
\beta_j Z)$. Boston \cite{Boston} showed how three-point branched covers arise
in control theory, specifically with regards to a certain controller design
equation.  Finally, dessins appear in physics in the context of brane tilings
\cite{Hananyetal} and there is a moonshine correspondence between genus $0$
congruence subgroups of $\SL_2(\Z)$, associated with some special dessins,
and certain representations of sporadic
groups, with connections to gauge theory \cite{HeMcKay1,HeMcKay2,HeMcKayRead}.  

\section{Gr\"obner techniques}\label{sec:groeb}

We now begin our description of techniques for computing \Belyi\ maps.  We start
with the one that is most straightforward and easy to implement, involving the
solutions to an explicit set of equations over $\Q$. For \Belyi\ maps of small
degree, this method works quite well, and considerable technical effort has made
it work in moderate degree. However, for more complicated \Belyi\ maps, it will be
necessary to seek out other methods, which will be described in the sections that follow.

\subsection*{Direct calculation} \label{sec:directcalc}

The direct method has been used since the first \Belyi\ maps were written down,
and in small examples (typically with genus $0$), this technique works well
enough.  A large number of authors describe this approach, with some variations
relevant to the particular case of interest.  Shabat--Voevodsky \cite{SV} and
Atkin--Swinnerton-Dyer \cite{ASD} were among the first.  Birch \cite[Section
4.1]{Birch} computes a table for covers of small degree and genus.  Schneps
\cite[III]{Schneps} discusses the case of clean dessins of genus $0$ and trees.
Malle \cite{MalleTPR} computed a field of definition for many \Belyi\ maps of
small degree and genus $0$ using Gr\"obner methods, with an eye toward
understanding the field of definition of regular realizations of Galois groups
and a remark that such fields of definition also give rise to number fields
ramified over only a few very small primes.  Malle--Matzat \cite[\S
I.9]{MalleMatzat} use a direct method to compute several \Belyi\ maps in the
context of the inverse Galois problem, as an application of rigidity.
Granboulan studied the use of Gr\"obner bases for genus $0$ \Belyi\ maps in
detail in his Ph.D. thesis \cite{GranboulanThesis}. Elkies \cite{ElkiesSCC} used
this technique to compute equations for Shimura curves.  Other authors who have
used this method are Hoshino \cite{Hoshino} (and Hoshino--Nakamura
\cite{HoshinoNakamura}), who computed the non-normal inclusions of triangle
groups (related to the \Belyi{}-extending maps of Wood \cite{Wood}).  Couveignes
\cite[\S 2]{CouveignesTools} also gives a few introductory examples.

We explain how the method works by example in the simplest nontrivial case.

\begin{exm} \label{exm:firstex}
  Take the transitive permutation triple $\sigma=((1\ 2), (2\ 3), (1\ 3\ 2))$
  from $S_3$, with passport $(0,S_3,(2^1 1^1,2^1 1^1,3^1))$.  Since these
  permutations generate the full symmetric group $S_3$, the monodromy group of
  this \Belyi\ map is $S_3$.  The Riemann--Hurwitz formula \eqref{eq:genus} gives the
  genus as
  \begin{align*}
    g = 1 - 3 + \frac{1}{2}(1 + 1 + 2) = 0.
  \end{align*}
  So the map $f : X\cong\PP^1 \to \PP^1$ is given by a rational function $f(t)
  \in K(t)$ where $K \subset \Qbar$ is a number field.  There are two points
  above $0$, of multiplicities $2,1$, the same holds for $1$, and there is a
  single point above $\infty$ with multiplicity $3$.  The point above $\infty$
  is a triple pole of $f(t)$; since it is unique, it is fixed by
  $\Gal(\overline{K}/K)$; therefore we take this point also to be $\infty$,
  which we are free to do up to automorphisms of $\PP_K^1$, and hence $f(t) \in
  K[t]$.  Similarly, the ramified points above $0$ and $1$ are also unique, so
  we may take them to be $0$ and $1$, respectively.  Therefore, we have
  \begin{align*}
    f(t) = ct^2(t+a) 
  \end{align*}
  for some $a,c \in K \setminus \{0\}$ and
  \begin{align*}
    f(t)-1 = c(t-1)^2(t+b) 
  \end{align*}
  for some $b \in K \setminus \{0,-1\}$.  Combining these equations, we get
  \begin{align*}
    c t^2(t+a) - 1 = c(t^3+at^2) - 1 = c(t-1)^2(t+b) = c(t^3 + (b-2)t^2 +
    (1-2b)t + b) 
  \end{align*}
  and so by comparing coefficients we obtain $b=1/2$, $c=-2$, and $a=-3/2$.  In
  particular, we see that the map is defined over $K=\Q$ and is unique up to
  $\Aut(\PP_\Q^1) \cong \PGL_2(\Q)$.  Thus
  \begin{align*}
    f(t) = -t^2(2t-3) = -2t^3+3t^2, \quad f(t)-1 = -(t-1)^2(2t+1). 
  \end{align*}
  If we relax the requirement that the ramification set be $\{0,1,\infty\}$ and
  instead allow $\{0,r,\infty\}$ for some $r \neq 0,\infty$, then the form of
  $f$ can be made more pleasing. For example, by taking $f(t)=t^2(t+3)$ and
  $r=4$ we obtain $f(t)-4=(t-1)^2(t+2)$.  
\end{exm}

It is hopefully clear from this example (see Schneps \cite[Definition
8]{Schneps}) how to set up the corresponding system of equations for a \Belyi\
map on a curve of genus $g=0$: with variable coefficients, we equate the two
factorizations of a rational map with factorization specified by the cycle types
in the permutations triple $\sigma$.  We illustrate this further in the
following example; for a large list of examples of this kind, see Lando--Zvonkin
\cite[Example 2.3.1]{LandoZvonkin}.

\begin{exm} \label{exm:largegrob}
  To get a small taste of how complicated the equations defining a passport can get, consider the case
  $G=\PGL_2(\F_7)$ with permutation triple $\sigma=(\sigma_0,\sigma_1,\sigma_{\infty})$ given by
  \begin{align*}
    \sigma_0 = \begin{pmatrix} -1 & 0 \\ 0 & 1 \end{pmatrix} , \quad 
    \sigma_1 = \begin{pmatrix} -1 & 1 \\ -1 & 0 \end{pmatrix} , \quad 
    \sigma_{\infty} = \begin{pmatrix} 0 & 1 \\ 1 & 1 \end{pmatrix}.  
  \end{align*}
  The permutation representation of $G$ acting on the set of $8$ elements
  $\PP^1(\F_7)$ is given by the elements
  \begin{align*}
    (1\ 6)(2\ 5)(3\ 4), \quad (0\ \infty\ 1)(2\ 4\ 6), \quad (0\ 1\ 4\ 3\ 2\ 5\
    6\ \infty). 
  \end{align*}
  The corresponding degree $8$ \Belyi\ map $f:X \to \PP^1$ has passport 
  \[ (0,
  \PGL_2 (\F_7), ( 2^3 1^2, 3^2 1^2 , 8^1)). \]
  After putting the totally ramified
  point at $\infty$, the map $f$ is given by a polynomial $f(t) \in \Qbar[t]$
  such that
  \begin{equation}\label{eqn:ff1}
    f(t) = c a(t)^2 b(t) \quad \text{ and } \quad f(t) - 1 = c d(t)^3 e(t)
  \end{equation}
  where $c \in \Qbar^\times$ and $a(t),b(t),d(t),e(t) \in \Qbar[t]$ are monic
  squarefree polynomials with $\deg a(t)=3$ and $\deg b(t)=\deg d(t)=\deg e(t) =
  2$.  We write $a(t)=t^3+a_2t^2+a_1t+a_0$, etc.  

  Equating coefficients in (\ref{eqn:ff1}) we obtain the following system of $8$
  vanishing polynomials in $10$ variables:
  \begin{align*}
    &    a_0^2 b_0 c - c d_0^3 e_0, \\
    &    2 a_1 a_0 b_0 c + a_0^2 b_1 c - 3 c d_1 d_0^2 e_0 - c d_0^3 e_1, \\
    &    2 a_2 a_0 b_0 c + a_1^2 b_0 c + 2 a_1 a_0 b_1 c + a_0^2 c - 3 c d_1^2
         d_0 e_0 - 3 c d_1 d_0^2 e_1 - c d_0^3 - 3 c d_0^2 e_0, \\
    &    2 a_2 a_1 b_0 c + 2 a_2 a_0 b_1 c + a_1^2 b_1 c + 2 a_1 a_0 c + 2 a_0
         b_0 c - c d_1^3 e_0 - 3 c d_1^2 d_0 e_1 - 3 c d_1 d_0^2 \\
    &    \qquad
         - 6 c d_1 d_0 e_0 - 3 c d_0^2 e_1, \\
    &    a_2^2 b_0 c + 2 a_2 a_1 b_1 c + 2 a_2 a_0 c + a_1^2 c + 2 a_1 b_0 c + 2
         a_0 b_1 c -  c d_1^3 e_1 - 3 c d_1^2 d_0 - 3 c d_1^2 e_0 \\
    &    \qquad      - 6 c d_1 d_0 e_1 - 3 c d_0^2 -  3 c d_0 e_0, \\
    &    a_2^2 b_1 c + 2 a_2 a_1 c + 2 a_2 b_0 c + 2 a_1 b_1 c + 2 a_0 c - c
         d_1^3 - 3 c d_1^2 e_1 - 6 c d_1 d_0 - 3 c d_1 e_0 - 3 c d_0 e_1, \\
    &    a_2^2 c + 2 a_2 b_1 c + 2 a_1 c + b_0 c - 3 c d_1^2 - 3 c d_1 e_1 - 3 c
         d_0 - c e_0, \\
    &    2 a_2 c + b_1 c - 3 c d_1 - c e_1.
  \end{align*}

  Using a change of variables $t \leftarrow t-r$ with $r \in \Qbar$ we may
  assume that $b_1=0$, so $b_0 \neq 0$.  Note that if $f(t) \in K[t]$ is defined
  over $K$ then we may take $r \in K$, so we do not unnecessarily increase the
  field of definition of the map. Similarly, if $d_1 \neq 0$, then with $t
  \leftarrow ut$ and $u \in K^\times$ we may assume $d_1=b_0$; similarly if
  $e_1 \neq 0$, then we may take $e_1=b_0$. If $d_1=e_1=0$, then $f(t)=g(t^2)$
  is a polynomial in $t^2$, whence $a_0=0$ so $a_1 \neq 0$, and thus we may take
  $a_1=b_0$.  This gives a total of three cases: (i) $d_1=b_0 \neq 0$, (ii)
  $d_1=0$ and $e_1=b_0 \neq 0$, and (iii) $d_1=e_1=0$ and $a_1=b_0 \neq 0$.  We
  make these substitutions into the equations above, adding $c \neq 0$ and
  $b_1=0$ in all cases. Note that the equation $c \neq 0$ can be added
  algebraically by introducing a new variable $c'$ and adding the equation $c c'
  = 1$.

  These equations are complicated enough that they cannot be solved by hand, but
  not so complicated that they cannot be solved by a Gr\"obner basis.  There are
  many good references for the theory of Gr\"obner bases
  \cite{AdamsLoustaunau,CLO,CLO2,GreuelPfister,KreuzerRobbiano}.

  In the degenerate cases (ii) and (iii) we obtain the unit ideal, which does
  not yield any solutions. In the first case, we find two conjugate solutions
  defined over $\Q(\sqrt{2})$.  After some simplification, the first of the
  solutions becomes
  \begin{align*}
    f(t) = \bigl(2\sqrt{2}t^3-2(2\sqrt{2}+1)t^2+(-4+7\sqrt{2})t+1\bigr)^2
    \bigl(14t^2+6(\sqrt{2}+4)t - 8\sqrt{2}+31\bigr) 
  \end{align*}
  with
  \begin{align*}
    f(t)-432(4\sqrt{2}-5) = \bigl(2t^2-2\sqrt{2}+1 \bigr)^3 \bigl(14t^2
    -8(\sqrt{2}+4)t - 14\sqrt{2}+63 \bigr). 
  \end{align*}
\end{exm}

The direct method does not give an obvious way to discriminate among \Belyi\
maps by their monodromy groups, let alone to match up which Galois conjugate
corresponds to which monodromy triple: all covers with a given ramification type
are solutions to the above system of equations.  

To set up a similar system of equations in larger genus $g \geq 1$, one can for
example write down a general (singular) plane curve of degree equal to $\deg
\varphi$ and ask that have sufficiently many nodal singularities so that it has
geometric genus $g$; the \Belyi\ map can then be taken as one of the
coordinates, and similar techniques apply, though many non-solutions will
still be obtained in this way by cancellation of numerator and denominator. 

\begin{rmk}
  Any explicitly given quasiprojective variety $X$ with a surjective map to the
  moduli space $\calM_g$ of curves of genus $g$ will suffice for this purpose;
  so for those genera $g$ where the moduli space $\calM_g$ has a simpler
  representation (such as $g\leq 3$), one can use this representation instead.
  The authors are not aware of any \Belyi\ map computed in this way with genus
  $g \geq 3$.
\end{rmk}

The direct method can be used to compute the curves $X$ with \Belyi\ maps of
small degree. The curve $\PP^1$ is the only curve with a \Belyi\ map of degree
$2$ (the squaring map), and the only other curve that occurs in degree $3$ is
the genus $1$ curve with $j$-invariant $0$ and equation  $y^2 = x^3 + 1$,  for
which the \Belyi\ map is given by projecting onto the $y$-coordinate.  In degree
$4$, there is the elliptic curve of $j$-invariant 1728 with equation $y^2=x^3-x$
with \Belyi\ map given by $x^2$ and one other given by the the elliptic curve
$y^2 = 4(2x + 9)(x^2 + 2x + 9)$ and regular function $y + x^2 +4 x + 18$.  Both
were described by Birch \cite{Birch}. 

In the direction of tabulating the simplest dessins in this way, all clean
dessins (i.e.\ those for which all ramification indices above $1$ are equal to
$2$) with at most $8$ edges were computed by Adrianov et al.\
\cite{Adrianovetal}.   Magot--Zvonkin \cite{MagotZvonkin} and
Couveignes--Granboulan \cite{CouveignesGranboulan} computed the genus $0$
\Belyi\ maps corresponding to the Archimedean solids, including the Platonic
solids, using symmetry and Gr\"obner bases.  For a very complete discussion of
trees and Shabat polynomials and troves of examples, see Lando--Zvonkin \cite[\S
2.2]{LandoZvonkin}.

In general, we can see that these Gr\"obner basis techniques will present
significant algorithmic challenges.  Even moderately-sized examples, including
all but the first few of genus $1$, do not terminate in a reasonable time.  (In
the worst case, Gr\"obner basis methods have running time that is doubly
exponential in the input size, though this can be reduced to singly exponential
for zero-dimensional ideals; see the surveys of Ayad \cite{Ayad} and Mayr
\cite{Mayr}.) One further differentiation trick, which we introduce in the next
section, allows us to compute in a larger range.  However, even after this
modification, another obstacle remains: the set of solutions can have
positive-dimensional degenerate components. These components correspond to situations where
roots coincide or there is a common factor and are often called \defi{parasitic
solutions} \cite{Kreines03,Kreines08}.  The set of parasitic solutions have been
analyzed in some cases by van Hoeij--Vidunas \cite[\S 2.1]{vHV}, but they remain
a nuisance in general (as can be seen already in Example \ref{exm:largegrob}
above).

\begin{rmk}\label{rmk:naive1}
  Formulated more intrinsically, the naive equations considered in this section
  determine a scheme in the coefficient variables that is a naive version of the
  Hurwitz schemes that will be mentioned in Section \ref{sec:gens}.  Besides
  containing degenerate components, this naive scheme is usually very
  non-reduced. We will revisit this issue in Remark \ref{rmk:naive2}.

  When calculating a \Belyi\ map $f : \PP^1 \to \PP^1$, one usually fixes points
  on the source and the target. As we saw most elaborately when working out
  equation (\ref{eqn:ff1}), this reduces the problem of calculating a \Belyi\
  map in genus $0$ to finding the points on an affine scheme. The families of
  solutions in which numerator and denominator cancel give rise to some of the
  degenerate components mentioned in the previous paragraph.
\end{rmk}  

\subsection*{The ASD differentiation trick}

There is a trick, due to Atkin--Swinnerton-Dyer \cite[\S 2.4]{ASD} that uses the
derivative of $f$ to eliminate a large number of the indeterminates (``the
number of unknowns $c$ can be cut in half at once by observing that $dj/d\zeta$
has factors $F_3^2 F_2$'').  Couveignes \cite{CouveignesTools} implies that this
trick was known to Fricke; it has apparently been rediscovered many times.
Hempel \cite[\S 3]{Hempel} used differentiation by hand to classify subgroups of
$\SL_2(\Z)$ of genus $0$ with small torsion and many cusps.  Couveignes \cite[\S
2,\S 10]{CouveignesCRF} used this to compute examples in genus $0$ of
\defi{clean} dessins.  Schneps \cite[\S III]{Schneps} used this trick to
describe a general approach in genus $0$.  Finally, Vidunas \cite{Vidunas1}
applied the trick to differential equations, and Vidunas--Kitaev \cite{VK}
extended this to covers with $4$ branch points. 

\begin{exm}
  Again we illustrate the method by an example.  Take 
  \begin{align*}
    \sigma=((1\ 2), (2\ 4\ 3), (1\ 2\ 3\ 4))
  \end{align*}
  with passport $(0,S_4,(2^1 1^2, 3^1 1^1 , 4^1))$. Choosing the points $0$ and
  $1$ again to be ramified, this time of degrees $2,3$ above $0,1$ respectively,
  and choosing $\infty$ to be the ramified point above $\infty$, we can write
  \begin{align*}
    f(t) = c t^2(t^2+at+b)
  \end{align*}
  and
  \begin{align*}
    f(t)-1 = c(t-1)^3(t+d). 
  \end{align*}
  The trick is now to differentiate these relations, which yields
  \begin{align*} 
    f'(t) = ct\left( 2(t^2+at+b) + t(2t+a)\right) &= c(t-1)^2\left( (t-1) +
    3(t+d)\right) \\
    t(4t^2+3at+2b) &= (t-1)^2\left(4t+(3d-1)\right).
  \end{align*}
  By unique factorization, we must have $4t^2+3at+2b=4(t-1)^2$ and $4t =
  4t+(3d-1)$, so we instantly get $a=-8/3$, $b=2$, and $d=1/3$.  Substituting
  back we see that $c=3$, and obtain
  \begin{align*}
    f(t) = t^2(3t^2-8t+6) = (t-1)^3(3t+1) + 1. 
  \end{align*}
\end{exm}

More generally, the differentiation trick is an observation on divisors that
extends to higher genus, as used by Elkies \cite{Elkies237} in genus $g=1$. 

\begin{lem} \label{lem:belyiram}
  Let $f: X \to \PP^1$ be a \Belyi\ map with ramification type $\sigma$.  Let
  \begin{align*}
    \opdiv f = \sum_P e_P P - \sum_R e_R R 
    \quad \text{and} \quad
    \opdiv(f-1) = \sum_Q e_Q Q - \sum_R e_R R 
  \end{align*}
  be the divisors of $f$ and $f-1$.  Then the divisor of the differential $df$
  is
  \begin{align*}
    \opdiv df = \sum_P (e_P-1) P + \sum_Q (e_Q-1) Q - \sum_R (e_R+1) R. 
  \end{align*}
\end{lem}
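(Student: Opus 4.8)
The plan is to compute the order of vanishing $\ord_x(df)$ of the differential $df$ at every closed point $x\in X$ in a local uniformizer, working over $\Qbar$ so that we are in characteristic zero. The one structural observation I would lean on is that $df = d(f-1)$, so a single local analysis handles both the fiber over $0$ and the fiber over $1$; this is exactly what produces the symmetric roles of $P$ and $Q$ in the formula. Accordingly I would split the argument into the fibers over $0$, over $1$, over $\infty$, and the generic case of a point lying over none of these.

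Over $0$: at a point $P$ with uniformizer $t$, the dictionary between cycle lengths and ramification indices recorded after \eqref{eq:corresp} gives $f = t^{e_P} g$ with $g$ a local unit, $g(P)\neq 0$. Differentiating produces $df = t^{e_P-1}(e_P g + t g')\,dt$, and the second factor has value $e_P\,g(P)$ at $P$, which is nonzero in characteristic zero; hence $\ord_P(df)=e_P-1$. The identical computation applied to $f-1 = t^{e_Q}g$ at a point $Q$ over $1$, using $df=d(f-1)$, yields $\ord_Q(df)=e_Q-1$.

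Over $\infty$: writing $f = t^{-e_R}g$ with $g(R)\neq 0$, differentiation gives $df = t^{-e_R-1}(-e_R g + t g')\,dt$, whose second factor has nonzero value $-e_R\,g(R)$ at $R$, so $\ord_R(df)=-(e_R+1)$. At a point $x$ over none of $0,1,\infty$ the \Belyi\ map is unramified and $f(x)\neq\infty$, so $f-f(x)$ is itself a uniformizer and $df$ is a local generator of the differentials, contributing $0$. Collecting all local contributions gives precisely $\opdiv df = \sum_P (e_P-1)P + \sum_Q (e_Q-1)Q - \sum_R (e_R+1)R$.

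The calculation is routine; the one step I would flag is the repeated appeal to characteristic zero (tameness) to guarantee that the leading coefficients $\pm e_P,\pm e_R$ do not vanish, since otherwise the computed orders could jump. As an alternative, one-line derivation I would record that $df = f^*(du)$ for the standard coordinate $u$ on $\PP^1$ with $\opdiv(du) = -2[\infty]$, whence $\opdiv(df) = f^*\opdiv(du) + \sum_x (e_x-1)x$; the contributions over $\infty$ then combine as $-2e_R + (e_R-1) = -(e_R+1)$, reproducing the formula, with the same tameness hypothesis underlying the ramification-divisor identity.
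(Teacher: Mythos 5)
Your proof is correct, but it proceeds differently from the paper's. You carry out a complete local computation of $\ord_x(df)$ at every point of $X$ --- including the points away from $f^{-1}(\{0,1,\infty\})$ --- and in each case you verify that the leading coefficient ($\pm e_P\,g(P)$, etc.) is nonzero, which is where the characteristic-zero (tameness) hypothesis enters explicitly. The paper instead only extracts the easy inequality $\opdiv df \geq D$ from the Leibniz rule (where $D$ is the claimed divisor), then computes $\deg D = 2g-2$ via the Riemann--Hurwitz formula \eqref{eq:genus} together with $\sum_R e_R = n$; since any nonzero meromorphic differential has divisor of degree $2g-2$, equality follows and $df$ can have no further zeros. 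The trade-off: your argument is self-contained and makes the tameness assumption visible at the exact step where it is used, but requires the separate case analysis for unramified points and the non-vanishing checks; the paper's degree count dispenses with both of these at once (no leading-coefficient verification, and extra zeros anywhere on $X$ are excluded for free), at the cost of invoking Riemann--Hurwitz, which itself encodes the tameness. Your closing alternative via $\opdiv(df) = f^*\opdiv(du) + \sum_x (e_x - 1)x$ is a third correct route, essentially a repackaging of your local computation through the ramification-divisor identity.
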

\begin{proof}
  Let
  \begin{align*}
    D= \sum_P (e_P-1) P + \sum_Q (e_Q-1) Q - \sum_{R} (e_R+1)R. 
  \end{align*}
  Then $\opdiv df \geq D$ by the Leibniz rule.  By Riemann--Hurwitz, we have
  \begin{align*}
    2g-2=-2n+\sum_{P} (e_P-1) + \sum_{Q} (e_Q-1) + \sum_{R} (e_R-1) 
  \end{align*}
  so 
  \begin{align*}
    \deg(D)=2g-2+2n-2\sum_R e_R = 2g-2 
  \end{align*}
  since $\sum_R e_R = n$.  Therefore $\opdiv df$ can have no further zeros.
\end{proof}

Combined with unique factorization, this gives the following general algorithm
in genus $0$. Write
\begin{align*}
  f(t) = \frac{p(t)}{q(t)} = 1 + \frac{r(t)}{q(t)}
\end{align*}
for polynomials $p(t),q(t),r(t) \in \Qbar [t]$. Consider the derivatives
$p'(t),q'(t),r'(t)$ with respect to $t$ and let $p_0(t)=\gcd(p(t),p'(t))$ and
similarly $q_0(t),r_0(t)$.  Write
\begin{align*}
  P(t) = \frac{p(t)}{p_0(t)} \text{ and } \widetilde{P}(t) =
  \frac{p'(t)}{p_0(t)}
\end{align*}
and similarly $Q$, etc.  Then by unique factorization, and the fact that $P , Q
, R$ have no common divisor, evaluation of the expressions $p(t) - q(t) = r(t)$
and $p'(t) - q'(t) = r'(t)$ yields that $Q(t) \widetilde{R}(t) -
\widetilde{Q}(t) R(t)$ is a multiple of $p_0(t)$, and similarly $P(t)
\widetilde{R}(t) - \widetilde{P}(t)R(t)$ (resp.\ $P(t) \widetilde{Q}(t) -
\widetilde{P}(t) Q(t)$) is a multiple of $q_0(t)$ (resp.\  $r_0(t)$).

These statements generalize to higher genus, where they translate to inclusions
of divisors; but the usefulness of this for concrete calculations is limited and
do not pass to relations of functions, since the coordinate rings of higher
genus curves are usually not UFDs. Essentially, one has to be in an especially
agreeable situation for a statement on functions to fall out, and usually one
only has a relation on the Jacobian (after taking divisors, as in the lemma
above). A concrete and important situation where a relation involving functions
does occur is considered by Elkies \cite{Elkies237}. The methods in his example
generalize to arbitrary situations where the ramification is uniform (all
ramification indices equal) except at one point of the \Belyi\ curve: Elkies himself treats the \Belyi\ maps with
passport $(1, \PSL_2 (\F_{27}), (3^9 1^1, 2^{14}, 7^4))$.

The differentiation trick does not seem to generalize extraordinarily well to
higher derivatives; we can repeat the procedure above and further differentiate
$p'(t),q'(t),r'(t)$, but experimentally this not seem to make the ideal grow
further than in the first step. 

\begin{ques}
  Is the ideal obtained by adding all higher order derivatives equal to the one
  obtained from just adding equations coming from first order derivatives (in
  genus $0$)?
\end{ques}

However, Shabat \cite[Theorem 4.4]{Shabat} does derive some further information
by considering second-order differentials; and Dremov \cite{Dremov}
calculates \Belyi\ maps using the quadratic differential
\begin{align*}
  MP(f) = \frac{df^2}{f(1 - f)}
\end{align*}
for a regular function $f$ and considering the equalities following from the
relation 
\[ MP(f^{-1}) = -MP(f) / f. \] 
 It is not immediately clear from these
paper how to use this strategy in general, though.

\begin{ques}
  How generally does the method of considering second-order differentials apply?
\end{ques}

The additional equations coming from the differentiation trick not only speed up
the process of calculating \Belyi\ maps, but they also tend to give rise to a
Jacobian matrix at a solution that is often of larger rank than the direct
system. This is important when trying to Hensel lift a solution obtained over
$\C$ or over a finite field, where the non-singularity of the Jacobian involved
is essential.  (We discuss these methods in sections that follow.)

\begin{rmk}\label{rmk:naive2}
  Phrased in the language of the naive moduli space in Remark \ref{rmk:naive1},
  the additional ASD relations partially saturate the corresponding equation
  ideal, so that the larger set of equations defines the same set of geometric
  points, but with smaller multiplicities.  (We thank Bernd Sturmfels for this
  remark.)  Reducing this multiplicity all the way to $1$ is exactly the same as
  giving the Jacobian mentioned above full rank.
\end{rmk}

\begin{exm}
  The use of this trick for reducing multiplicities is best illustrated by some
  small examples.

  The first degree $d$ in which the ASD differentiation trick helps to give the
  Jacobian matrix full rank is $d=6$; it occurs for the ramification triples
  $(2^3,2^3,3^2)$, $(2^2 1^2, 3^2, 4^1  2^1 )$, $(3^2, 3^1  2^1  1, 3^1  2^1
  1^1)$, $(3^1 1^3, 4^1 2^1, 4^1  2^1)$, $(4^1  2^1, 4^1  1^2, 3^1  2^1  1^1)$,
  and $(4^1  2^1, 3^1  2^1  1^1, 3^1 2^1  1^1)$, where it reduces the
  multiplicity of the corresponding solutions from $9,3,3,3,4,3$ respectively to
  $1$. Note the tendency of \Belyi\ maps with many automorphisms to give rise to
  highly singular points, as for curves with many automorphisms in the
  corresponding moduli spaces.

  On the other hand, there are examples where even adding the ASD relations does
  not lead to a matrix of full rank. Such a case is first found in degree $7$;
  it corresponds to the ramification triples $(4^1  2^1  1^1, 3^1  2^1  1^2 ,
  4^1 3^1)$, and throwing in the ASD relations reduces the multiplicity from $8$
  to $2$.  Unfortunately, iterating the trick does not make the ideal grow
  further in this case.

  More dramatically, for the ramification triples $(2^4, 3^2 2^1, 3^2
  2^1)$ and $(2^3 1^2, 4^2 , 3^2 2^1)$, differentiation reduces some
  multiplicities from $64$ to $1$ (resp.\ $64$ to $4$). In the latter case,
  these multiplicities are in fact not determined uniquely by the corresponding
  ramification type, so that considering these multiplicities gives a way to
  split the solutions into disjoint Galois orbits.
\end{exm}

\begin{ques}
  How close is the ideal obtained from the differentiation trick (combined with
  the direct method) to being radical?  Can one give an upper bound for the
  multiplicity of isolated points?
\end{ques}

\subsection*{Further extensions}

There can be several reasons why a Gr\"obner basis calculation fails to
terminate.  One problem is coefficient blowup while calculating the elimination
ideals.  This can be dealt by first reducing modulo a suitable prime $p$,
calculating a Gr\"obner basis for the system modulo $p$, then lifting the good
solutions (or the Gr\"obner basis itself) $p$-adically, recognizing the
coefficients as rational numbers, and then verifying that the basis over $\Q$ is
correct.  This was used by Malle \cite{Malle15,MalleTrinks} to compute covers
with passports $(0 , \Hol(E_8) , (4^1 2^1 1^2,4^1 2^1 1^2,6^1 2^1))$ and $(0 ,
\PGL(\F_{11}) , (2^5 1^2,4^3,11^1 1^1))$ and similarly Malle--Matzat
\cite{MalleMatzatPSL2Fp} to compute covers for $(0 ,
\PSL_2(\F_{11}) , (2^4 1^3, 6^1 3^1 2^1, 6^1 3^1 2^1))$ and $(0 ,
\PSL_2(\F_{13}) , (2^7, 4^3 1^2, 6^2 1^2))$.  This idea was also used by
Vidunas--Kitaev \cite[\S 5]{VK}.  For further developments on $p$-adic methods
to compute Gr\"obner bases, see Arnold \cite{Arnold} or Winkler \cite{Winkler}.
One can also lift a solution modulo $p$ directly, and sometimes such solutions
can be obtained relatively quickly without also $p$-adically lifting the
Gr\"obner bases: this is the basic idea presented in Section \ref{sec:padic}.

In the work of van Hoeij--Vidunas \cite{vHV2,vHV} mentioned in Section
\ref{sec:backg}, genus $0$ \Belyi\ functions are computed by using pullbacks of
the hypergeometric differential equation and their solutions.  This method works
well when the order of each ramification point is as large as possible, e.g.,
when the permutations $\sigma_0,\sigma_1,\sigma_{\infty}$ contain (almost) solely
cycles of order $n_0,n_1,n_{\infty}$ say, and only a few cycles of smaller
order. For example, this occurs when the cover is Galois, or slightly weaker,
when it is \defi{regular}, that is to say, when the permutations
$\sigma_0,\sigma_1,\sigma_\infty$ are a product of disjoint cycles of equal
cardinality.

The method of van Hoeij--Vidunas to calculate a \Belyi\ map $f : X \to \PP^1$ is
to consider the $n$ \defi{exceptional} ramification points in $X$ of $f$ whose
ramification orders do not equal the usual orders $a,b,c$. One then equips the
base space $\PP^1$ with the hypergeometric equation whose local exponents at
$0,1,\infty$ equal $a,b,c$. Pulling back the hypergeometric equation by $f$, one
obtains a Fuchsian differential equation with singularities exactly in the $n$
exceptional points. The mere fact that this pullback exists implies equations on
the undetermined coefficients of $f$.

For example, when the number of exceptional points is just $n=3$, the
differential equation can be renormalized to a Gaussian hypergeometric
differential equation, which completely determines it. When $n=4$, one obtains a
form of Heun's equation \cite{MovasatiReiter, vHV2}. Heun's equation depends on
the relative position of the fourth ramification point, as well as on an
\defi{accessory parameter}; still, there are only two parameters remaining in
the computation.

One shows that for fixed $n$ and genus $g$ (taken as $g=0$ later), there are
only finitely many hyperbolic \Belyi\ functions with $n$ exceptional points.
For small $n$, van Hoeij and Vidunas show that this differential method is
successful in practice, and they compute all (hyperbolic) examples with $n \leq
4$ (the largest degree of such a \Belyi\ map was $60$).  

\begin{ques}
  Are there other sources of equations (such as those arising from differential
  equations, algebraic manipulation, etc.)\ that further simplify the
  scheme obtained from the direct method?
\end{ques}

\section{Complex analytic methods}\label{sec:coan}

In this section we consider complex analytic methods for finding equations for
\Belyi\ maps. These methods are essentially approximative; a high precision
solution over $\C$ is determined, from which one reconstructs an exact
solution over $\Qbar$.

\subsection*{Newton approximation}

We have seen in the previous section how to write down a system of equations
which give rise to the \Belyi\ map.  These equations can be solved numerically
in $\C$ using multidimensional Newton iteration, given an approximate solution
that is correct to a sufficient degree of precision and a subset of equations of
full rank whose Jacobian has a good condition number (determinant bounded away
from zero).  Then, given a complex approximation that is correct to high
precision, one can then use the LLL lattice-reduction algorithm \cite{LLL} (as 
well as other methods, such as PSLQ \cite{PSLQ}) to guess algebraic numbers that
represent the exact values. Finally, one can use the results from Section
\ref{sec:veri} to verify that the guessed cover is correct; if not, one can go
back and iterate to refine the solution.

\begin{rmk}\label{rem:algdep}
  We may repeat this computation for each representative of the Galois orbit to
  find the full set of conjugates for each putative algebraic number and then
  recognize the symmetric functions of these conjugates as rational numbers
  using continued fractions instead.  For example, one can compute each
  representative in the passport, possibly including several Galois orbits.  The
  use of continued fractions has the potential to significantly reduce the
  precision required to recognize the \Belyi\ map exactly.
\end{rmk}

\begin{exm}
  Consider the permutation triple
  \begin{align*}
    \sigma_0 = (1\ 3\ 2)(4\ 6\ 5), \quad
    \sigma_1 = (1\ 5\ 2)(3\ 4)(6\ 7), \quad
    \sigma_{\infty}=(1\ 3\ 5\ 2\ 6\ 7\ 4).
  \end{align*}
  From the Riemann--Hurwitz formula, we find that the associated \Belyi\ curve
  $X$ has genus $g=1$.  The ramification point of index $7$ on $X$ (over
  $\infty$) is unique, so we take it to be the origin of the group law on $X$.
  Moreover, since there is a unique unramified point above $0$, we can use a
  normal form (due to Tate) of an elliptic curve with a marked point.  This is
  given by an equation
  \begin{align}\label{eq:M11}
    y^2 + p_3 y = q(x) =  x^3 + p_2 x^2 + p_4 x 
  \end{align}
  with marked point $(0,0)$. The equation \eqref{eq:M11} is unique up to scaling
  the coefficients by $u\neq 0$ according to $(p_2 ,
  p_3 , p_4 ) \mapsto (u^2 a_2 , u^3 a_3 , u^4 a_4)$, showing that the moduli
  spaces $\calM_{1,2}$ of genus $1$ curves with two marked points is isomorphic
  to the weighted projective space $\PP (2,3,4)$. 

  Since the origin of the group law of $X$ maps to $\infty$ and $(0,0)$ maps to $0$,
  the \Belyi\ map $f:X \to \PP^1$ of degree $7$ is of the form
  \begin{align*}
    f(x,y) = (a_3x^3+a_2x^2+a_1x) + (b_2x^2+b_1x+b_0)y = a(x)+b(x)y.
  \end{align*}
  The ramification above $f=0$ leads to the equation
  \begin{align*}
    \N_{\C (x,y) / \C (x)} ( f(x,y) ) = a(x) (a(x)- p_3 b(x)) - b(x)^2 q(x) =
    -b_2^2 x c(x)^3
  \end{align*}
  where $c(x)$ is the monic polynomial $x^2+c_1x+c_0$. Consideration of the
  ramification above $1$ yields
  \begin{align*}
    \N_{\C (x,y) / \C (x)} ( f(x,y) - 1 ) = (a(x) - 1) (a(x) - 1 - p_3 b(x)) -
    b(x)^2 q(x) = -b_2^2 d(x)^3 e(x)^2
  \end{align*}
  where $d(x)=x+d_0$ and $e(x)=x^2+e_1x+e_0$.
  
  This yields $13$ equations in $14$ unknowns. The reason for this is that we are
  still free to scale the $p_i$. Here we have to distinguish cases. We first
  suppose that the point $(0,0)$ in (\ref{eq:M11}) is not $2$-torsion, or equivalently,
   that $p_3 \neq 0$: this is the ``generic'' case.  We can then distinguish two further
  cases, namely $p_2 \neq 0$ and $p_4 \neq 0$. 
  Accordingly, we may then ensure $p_2=p_3$ or $p_3=p_4$ by scaling over the ground
  field, so that we do not needlessly
  enlarge the coefficient of the \Belyi\ map.  In either case, plugging in random choices
  for the vector of unknowns $(a,b,c,d,e,p) \in \C^{14}$  
  and applying multivariate Newton iteration fails to yield a solution. 

  To improve the convergence, we now proceed to remove some degenerate cases from
  this set of equations. Applying the trick from Example \ref{exm:largegrob}, we
  impose that $c_0 d_0 e_0 \neq 0$, as we may since the ramification points are
  distinct and $(0,0)$ is a ramification point.  
  (This in fact assumes that none
  of the other ramification points is $(0,-1)$, which leads to a
  subcase that turns out not to yield a solution.)  
  We further insist that $c$ and $e$ do not
  have a double root, so $(c_1^2-c_0)(e_1^2-e_0) \neq 0$. This adds $2$
  more variables and equations.

  Finally, we saturate our equations using the Atkin--Swinnerton-Dyer trick in 
  Lemma \ref{lem:belyiram}. The
  differential $dx/(2 y + p_3)$ is holomorphic and has no zeros or poles, so
  denoting derivation with respect to $x$ by $'$, we see that
  \begin{align*}
    \frac{df}{dx/(2 y + p_3)} = (2 y + p_3) \frac{df}{dx} & = a'(x) (2 y + p_3)
    + b'(x) (2 y + p_3) y + b(x) (2 y + a_3) y' \\
    & = (2 b'(x) q (x) + b (x) q'(x) + p_3 a'(x)) + (2 a'(x) - p_3 b'(x)) y
  \end{align*}
  satisfies
  \begin{align*}
    \N(((2 y + p_3) (df/dx)) = 49 b_2^2 c(x)^2d(x)^2e(x)  .
  \end{align*}
  This differentation trick thus yields another $8$ equations. But even after
  adding these and the nondegeneracy conditions, random choices for an initial
  approximation fail to converge to a solution for the new system of $23$
  equations in $15$ unknowns.

  So we are led to consider the case where $p_3 = 0$, so that the unramified point above
  $0$ is $2$-torsion. (Here, there is some extra ambiguity, since the moduli space $X_0(2)=X_1(2)$ 
  is not a fine moduli space.)  If we write
  $\opdiv(f)=(0,0)+3P_1+3P_2-7\infty$ and $\opdiv(f-1)=2Q_1+2Q_2+3Q_3$, then we
  have
  \begin{align*}
    \opdiv(df)=2P_1+2P_2+Q_1+Q_2+2Q_3-8\infty
  \end{align*}
  and so we obtain the relations 
  \begin{align*}
    Q_1+Q_2=3Q_3=0, \quad 3Q_3=0, \quad (0,0)+(P_1+P_2)=-Q_3, \quad 2(0,0)=0
  \end{align*}
  in the group law of $X$. In particular, $P_1+P_2$ is a $6$-torsion point on
  $X$. Relations such as these can be used to find extra equations for $X$ and
  $f$ by using division polynomials. But again, the new system fails to yield
  any solutions; perhaps one can prove non-existence of solutions directly.
  
  Here, we look ahead to the methods of this section and 
  Section \ref{sec:mod} that allow us to find an approximation
  to the solution. It turns out that we only need $3$ decimal places to get the
  Newton method converging to a real solution with $p_3 \neq 0$ and $p_2 = p_3$,
  approximated by the solution
  \begin{align*}
    &(a,b,c,d,e,p) \approx \\
    &\quad ( 182.7513294, 146.8290694, 29.38993410, -308.3482399, -244.0552479 \\
    &\qquad -48.11742858, 0.7992141684, 0.1613326212, 0.1482181605, 0.9764940118, \\
    &\qquad 0.2561882114, 1.165925608, 0.4430649844, 163.2364906, 3.003693522 )
  \end{align*}
  in $\C^{13}$.  The condition number of the system without the additional
  Atkin--Swinnerton-Dyer relations is approximately $3.3 \cdot 10^7$; but by adding
  some of these relations, this can be decreased to approximately $1.2 \cdot 10^5$.

  Using LLL, we recognize this as a putative 
  solution over $\Q (\alpha)$ with $\alpha^3 - 3 \alpha + 12 = 0$; then
  we verify that the recognized solution is correct using 
  the methods of Section \ref{sec:veri}.  This solution thereby gives rise
  to two more complex (conjugate) solutions. Since there are only three permutation triples
  with the given ramification passport, we see that we have found all dessins of
  the given ramification type, so we need not consider the other cases further.

  As mentioned in Remark \ref{rem:algdep}, the standard algorithms to recognize
  algebraic dependency work better after symmetrizing over these conjugate
  solutions. For the most difficult algebraic number to recognize (which is
  $b_2$) using a single solution requires the knowledge of $161$ digits, whereas
  recognition as an algebraic number needs only $76$ digits.

  If we drop the demand that the unramified point is at $(0,0)$, then we can
  simplify the solution somewhat, as in Section \ref{sec:veri}. In Weierstrass
  form, we can take $X$ to be given by the curve
  \begin{align*}
    y^2 = x^3 & + (-541809 \alpha^2 + 898452 \alpha + 2255040) x \\
    & + (-2929526838 \alpha^2 + 5759667648 \alpha - 11423888784)  .
  \end{align*}
  and the function $f = a(x) + b (x) y$ by
  \begin{align*}
    2^{13} 3^{14} 5^5 a(x) & = (1491 \alpha^2 + 6902 \alpha + 10360) x^3 \\
    & + (1410885 \alpha^2 + 2033262 \alpha - 4313736) x^2 \\
    & + (731506545 \alpha^2 + 15899218650 \alpha + 32119846920) x \\
    & - (7127713852353 \alpha^2 + 3819943520226 \alpha + 62260261739784)
  \end{align*}
  and
  \begin{align*}
    2^{13} 3^{16} 5^5 b(x) & = (-197 \alpha^2 - 240 \alpha + 528) x^2 \\
    & + (906570 \alpha^2 - 546840 \alpha - 8285760) x \\
    & - (715988241 \alpha^2 - 2506621464 \alpha - 1458270864)  .
  \end{align*}
  We thank Marco Streng for his help with reducing these solutions. 
  Applying the methods in Section \ref{sec:mod} already gives
  equations that are better than those in the normalized forms (\ref{eq:M11})
  considered above; at least experimentally, using the modular method also tends to
  give equations of relatively small height.
\end{exm}

As we have seen in the preceding example, 
in order for this procedure to work, one needs a good starting approximation to
the solution.  In the non-trivial examples that we have computed so far, it
seems that often this approximation must be given to reasonably high precision
(at least 30 digits for moderately-sized examples) in order for the convergence
to kick in. The required precision seems difficult to estimate from above or
below.  And indeed the dynamical system arising from Newton's method has quite
delicate fractal-like properties and its study is a subject in itself
\cite{NewtonDynam}.

\begin{ques}
  Is there an explicit sequence of \Belyi\ maps with the property that the
  precision required for Newton iteration to converge tends to infinity?
\end{ques}

One way to find a starting approximation to the solution is explained by
Couveignes--Granboulan
\cite{CouveignesCRF,GranboulanThesis,CouveignesGranboulan}.  They inductively
use the solution obtained from a simpler map: roughly speaking, they replace a
point of multiplicity $\nu$ with two points of multiplicities $\nu_1,\nu_2$ with
$\nu_1+\nu_2=\nu$.  One can use any appropriate base case for the induction,
such as a map having simple ramification.  Couveignes \cite{CouveignesCRF} gives
a detailed treatment of the case of \defi{trees}, corresponding to clean \Belyi\
polynomials $f(t)$, i.e.\ those with $f(t)-1 = g(t)^2$: geometrically, this
means that the corresponding dessin can be interpreted as a tree with oriented
edges.  In this case, after an application of the differentiation trick, one is
led to solve a system of equations where many equations are linear. See
Granboulan \cite[Chapter IV]{GranboulanThesis} for an example with monodromy
group $\Aut(M_{22})$. 

\begin{rmk}
  There is a misprint in the example of Couveignes \cite[\S 3, pg.\
  8]{CouveignesCRF} concerning the discriminant of the field involved, corrected
  by Granboulan \cite[p.\ 64]{GranboulanThesis}.
\end{rmk}

So far, it seems that the inductive numerical method has been limited to genus
$0$ \Belyi\ maps with special features.  A similar method was employed by
Matiyasevich \cite{Matiyasevich} for trees: he recursively transforms the
initial polynomial $2t^n-1$ (corresponding to a star tree) into a polynomial
representing the desired planar tree.

\begin{ques}
  Can an inductive complex analytic method be employed to compute more
  complicated \Belyi\ maps in practice?
\end{ques}

In particular, the iterative method by Couveignes and Granboulan to find a good
starting value seems to rely on intuition involving visual considerations; can
these be made algorithmically precise?

\subsection*{Circle packing}

Another complex analytic approach is to use \defi{circle packing methods}. This
technique was extensively developed in work of Bowers--Stephenson
\cite{BowersStephenson}, with a corresponding Java script \texttt{CirclePack}
available for calculations.

Given a dessin (i.e., the topological data underlying a \Belyi\ map), one
obtains a triangulation of the underlying surface by taking the inverse image of
$\PP^1 (\R) \subset \PP^1 (\C)$ together with the corresponding cell
decomposition. Choosing isomorphisms between these triangle and the standard
equilateral triangle in $\C$ and gluing appropriately, one recovers the Riemann
surface structure and as a result a meromorphic description of the \Belyi\ map.

However, the Riemann surface structure is difficult to determine explicitly,
starting from the dessin.  As an alternative, one can pass to \defi{discrete
\Belyi\ maps} instead.  To motivate this construction, note that a Riemann
surface structure on a compact surface induces a unique metric of constant
curvature $1,0,-1$ (according as $g=0,1,\geq 2$) so that one can then speak
meaningfully about circles on such a surface. In particular, it makes sense to
ask whether or not there exists a \defi{circle packing} associated with the
triangulation, a pattern of circles centered at the vertices of this
triangulation satisfying the tangency condition suggested by the triangulation.
Satisfyingly enough, the \defi{circle packing theorem}, due
Koebe--Andreev--Thurston \cite{Koebe,MardenRodin,Thurston}, states that given a
triangulation of a topological surface, there exists a unique structure of
Riemann surface that leads to a compatible circle packing. This then realizes
the topological map to the Riemann sphere as a smooth function.

In summary, starting with a dessin, one obtains a triangulation and hence a
circle packing.  The corresponding discrete \Belyi\ map will in general
\emph{not} be meromorphic for the Riemann surface structure induced by the
circle packing; but Bowers and Stephenson prove that it does converge to the
correct solution as the triangulation is iteratively hexagonally refined.

The crucial point is now to compute the discrete approximations obtained by
circle packing in an explicit and efficient way. Fortunately, this is indeed
possible; work by Collins--Stephenson \cite{CollinsStephenson} and Mohar
\cite{Mohar} give algorithms for this. The crucial step is to lift the
configuration of circles to the universal cover $H$ (which is either the sphere
$\PP^1 (\C)$, the plane $\C$, or the upper half-plane $\calH$) and perform the
calculation in $H$. In fact, this means that the circle packing method also
explicitly solves the \defi{uniformization problem} for the surface involved;
for theoretical aspects, we refer to Beardon--Stephenson
\cite{BeardonStephenson}.  Upon passing to $H$ and using the appropriate
geometry, one then \emph{first} calculates the radii of the circles involved
from the combinatorics, before fitting the result into $H$, where it gives rise
to a fundamental domain for the corresponding curve as a quotient of $H$.  

An assortment of examples of the circle packing method is given by
Bowers--Stephenson \cite[\S 5]{BowersStephenson}, and numerical approximations
are computed to a few digits of accuracy.  This includes genus $0$ examples of
degree up to $18$, genus $1$ examples of degree up to $24$, and genus $2$
examples of degree up to $14$. For determining the conformal structure, this
approach is therefore much more effective indeed than the naive method from
Section \ref{sec:groeb}. Even better, one can proceed inductively from simpler
dessins by using so-called \defi{dessin moves} \cite[\S 6.1]{BowersStephenson},
which makes this approach quite suitable for calculating large tables of
conformal realizations of dessins.

On the other hand, there are no theoretical results on the number of refinements
needed to obtain given accuracy for the circle packing method \cite[\S
7]{BowersStephenson}.  In examples, it is possible for the insertion of a new
vertex to drastically increase the accuracy needed \cite[Figure
25]{BowersStephenson} and thereby the number of discrete refinements needed,
quite radically increasing the complexity of the calculation \cite[\S
8.2]{BowersStephenson}.  However, the method is quite effective in practice,
particularly in genus $0$.

More problematically, it seems difficult to recover equations over $\Qbar$ for
the \Belyi\ map from the computed fundamental domain if the genus is strictly
positive.  One can compute the periods of the associated Riemann surface to some
accuracy, but one still needs to recover the curve $X$ and transfer the \Belyi\
map $f$ on $X$ accordingly.  Moreover, is also not clear that the accuracy
obtained using this method is enough to jump start Newton iteration and thereby
obtain the high accuracy needed to recognize the map over $\Qbar$.  In Section
\ref{sec:mod}, we circumvent this problem by starting straightaway with an
explicit group $\Gamma$ of isometries of $H$ so that $\Gamma \backslash H \cong
X$ and then finding equations for $X$ by numerically computing modular forms
(i.e., differential forms) on $X$.

\begin{exm}
  In Figure \ref{fig:m23}, we give an example from an alternate implementation
  by Westbury, which is freely available \cite{Westbury} for the case of
  genus $0$. In the figure, an outer polygon is inserted instead of a circle to
  simplify the calculation of the radii. We show the conformal triangulation
  induced by the second barycentric subdivision of the original triangulation
  for one of the exactly $2$ covers in Example \ref{ex:mathest} that descend to
  $\R$.

  \begin{equation} \label{fig:m23} \notag
    \includegraphics[scale=0.8]{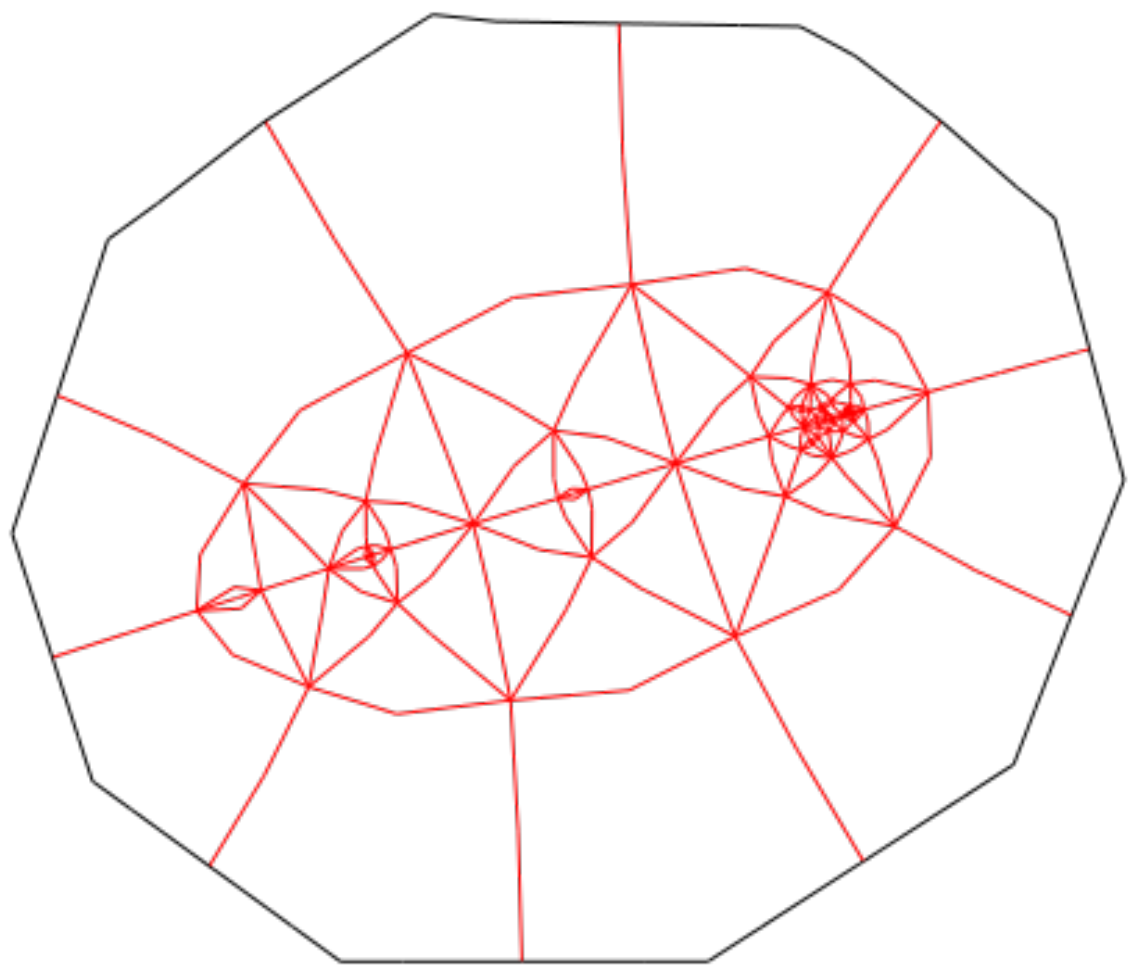}
  \end{equation}
  \begin{center}
  \textbf{Figure \ref{fig:m23}}: A second subdivision for $M_{23}$ \\
  \vspace{2ex}
  \end{center}
  \addtocounter{equation}{1}

  Several more subdivisions would be needed to get the solution close enough to
  apply Newton's method.
\end{exm}

\subsection*{Puiseux series}

Couveignes--Granboulan \cite[\S 6]{CouveignesGranboulan} proposed an alternative
method using Puiseux series expansions to get a good complex approximation to
the solution so that again multidimensional Newton iteration can kick off.

At every regular point $P$ in the curve $X$, the \Belyi\ map has an analytic
expansion as a power series in a uniformizer $z$ at $P$ that converges in a
neighborhood of $P$.  Similarly, at a ramification point $P$, there is an
expansion for $f$ that is a Puiseux series in the uniformizer $z$; more
specifically, it is a power series in $z^{1/e}=\exp(2\pi i \log(z)/e)$ where $e$
is the ramification index of $P$ and $\log$ is taken to be the principal
logarithm.  Now, these series expansions must agree whenever they overlap, and
these relations between the various expansions give conditions on their
coefficients.  More precisely, one chooses tangential base points, called
\defi{standards}, and the implied symbolic relations are then integrated with
respect to a measure with compact support. Collecting the relations, one obtains
a block matrix, the positioning of whose blocks reflects the topology of the
overlaps of the cover used. 

Unfortunately, Couveignes and Granboulan do not give an example of this method
in practice, and the most detail they give concerns iterative ad hoc methods
\cite[\S 7]{CouveignesGranboulan}. 

\begin{ques}
  How effective is the method of Puiseux series in finding a good starting
  approximation?  Can one prove rigorously that this method gives a correct
  answer to a desired precision?
\end{ques}

\subsection*{Homotopy methods}

One idea that has yet to be explored (to the authors' knowledge) is the use of
techniques from numerical algebraic geometry, such as polyhedral homotopy
methods \cite{Bertini,Verschelde}, to compute \Belyi\ maps.  The success of
homotopy methods in solving extremely large systems of equations, including
those with positive-dimensional components, has been dramatic.  In broad
stroke, one deforms the solution of an easier system to the desired ones and
carefully analyzes the behavior of the transition matrix (Jacobian) to ensure
convergence of the final solution.  Because these methods are similar in spirit
to the ones above, but applied for a more general purpose, it is natural to
wonder if these ideas can be specialized and then combined into a refined
technique tailored for \Belyi\ maps.

\begin{ques}
  Can the techniques of numerical algebraic geometry be used to compute \Belyi\
  maps efficiently?  
\end{ques}

A potential place to start in deforming is suggested by the work above and by
Couveignes \cite[\S 6]{CouveignesTools}: begin with a stable curve (separating
the branch points) and degenerate by bringing together the genus 0 components.
The difficulty then becomes understanding the combinatorial geometry of this
stable curve, which is an active area of research.

\subsection*{Zipper method}

Complex analytic techniques can also be brought to bear on \Belyi\ maps of
extremely large degree, at least for the case of trees, using an extension of
the \defi{zipper method} due to Marshall--Rohde \cite{Zipper1,Zipper2}.  The
zipper method finds a numerical approximation of the conformal map of the unit
disk onto any Jordan region \cite{Marshallweb}.  In its extension, this amount
to solving the Dirichlet problem with boundary for the domain of the exterior of
the desired dessin, which can be done quite simply for trees even with thousands
of branches.  For example, Marshall and Rohde have computed the dessins
associated to the \Belyi\ maps $f^n(z)$ where $f(z)=(3z^3-9z-2)/4$, giving a
sequence of \Belyi\ trees (under the preimage of $[-2,1]$), and by extension one
can obtain complex approximation to \Belyi\ maps of extremely large degree:
trees with tens of thousands of edges, far beyond the reach of other methods.

\begin{ques}
  Does the zipper method extend to higher genus?
\end{ques}

In the latter extension, one would need to consider not only the convergence of
the \Belyi\ map but also the associated \Belyi\ curve $X$, so it appears one will
have to do more than simply solve the Dirichlet problem.  See also work by
Larusson and Sadykov \cite{LarussonSadykov}, where the connection with the
classical Riemann-Hilbert problem is discussed in the context of trees.

\section{Modular forms}\label{sec:mod}

In this section we continue with the general strategy of using complex analytic
methods but shift our focus in the direction of geometry and consideration of
the uniformization theorem; we work explicitly with quotients of the upper
half-plane by Fuchsian groups and recast \Belyi\ maps in this language.   This
point of view is already suggested by Grothendieck \cite{Grothendieck}: 
\begin{quote}
  In more erudite terms, could it be true that every projective non-singular
  algebraic curve defined over a number field occurs as a possible ``modular
  curve'' parametrising elliptic curves equipped with a suitable rigidification?
  \ldots [T]he Soviet mathematician \Belyi\ announced exactly that result.
\end{quote}

As in the last section, the method here uses numerical approximations; however,
the use of modular functions adds considerable more number-theoretic flavor to
the analytic techniques in the previous section.

\subsection*{Classical modular forms}

Let $F_2$ be the free group on two generators as in \eqref{eqn:F2}.  Recall that
the map that considers the permutation action of $x,y,z$ on the cosets of a
subgroup yields a bijection
\begin{equation}\label{eq:freecorresp}
  \begin{gathered} 
    \left\{ \text{transitive permutation triples $\sigma = (\sigma_0, \sigma_1,
      \sigma_{\infty}) \in S_d^3$} \right\} /\!\sim
    \\
    \updownarrow \text{\small{1:1}}
    \\
    \left\{ \text{subgroups of $F_2$ of index $d$} \right\} /\!\sim\,;
  \end{gathered}
\end{equation}
here the equivalence relation on triples is again uniform conjugation, and the
equivalence relation on subgroups is conjugation in $F_2$.  In particular, by
Proposition \ref{prop:cateq2}, isomorphism classes of (connected) \Belyi\ maps
are in bijection with the conjugacy classes of subgroups $F_2$ of finite index.

The key observation is now that $F_2$ can be realized as an arithmetic group,
as follows.  
The group $\Gamma(1)=\PSL_2(\Z)=\SL_2(\Z)/\{\pm 1\}$ acts on the completed upper
half-plane $\calH^*=\calH \cup \PP^1(\Q)$ by linear fractional transformations
\begin{align*}
  z \mapsto \frac{az+b}{cz+d}, \quad \text{for}\ \pm \begin{pmatrix} a & b \\ c
    & d \end{pmatrix} \in \PSL_2(\Z).
\end{align*}
The quotient $X(1)=\Gamma(1)\backslash\calH^*$ can be given the structure of a
Riemann surface of genus $0$ by the uniformizing map $j: X(1) \xrightarrow{\sim}
\PP^1(\C)$ (often called the \defi{modular elliptic $j$-function}),
\begin{align*}
  j(q)=\frac{1}{q}+744+196884q+21493760q^2+864299970q^3+\dots
\end{align*}
where $q=\exp(2\pi iz)$.  

For an integer $N$, we define the normal subgroup $\Gamma(N)$ as the kernel of
the reduction map $\PSL_2(\Z) \to \PSL_2(\Z/N\Z)$.  We will be particularly
interested in the subgroup 
\begin{align*}
  \Gamma(2)= \left\{ \pm \begin{pmatrix} a & b \\ c & d \end{pmatrix} \in
  \PSL_2(\Z) : b \equiv c \equiv 0 \psmod{2} \right\}
\end{align*}
of index $6$, with quotient isomorphic to $\Gamma(1)/\Gamma(2) \cong
\PSL_2(\F_2)=\GL_2(\F_2) \cong S_3$.  The group $\Gamma(2)$ is in fact
isomorphic to the free group $F_2 \cong \Gamma(2)$: it is freely generated by
 $\pm \begin{pmatrix} 1 & 2 \\ 0 & 1 \end{pmatrix},\pm
\begin{pmatrix} 1 & 0 \\ 2 & 1 \end{pmatrix}$, which act on $\calH$ by $z
\mapsto z+2$ and $z \mapsto z/(2z+1)$, respectively; the corresponding
 action on the upper half plane is free as well.

The quotient $X(2)=\Gamma(2)\backslash\calH^*$ is again a Riemann surface of
genus $0$; the action of $\Gamma(2)$ on $\PP^1(\Q)$ 
has three orbits, with representatives $0,1,\infty \in
\PP^1 (\Q)$.  We obtain another uniformizing map $\lambda:X(2)
\xrightarrow{\sim} \PP^1(\C)$ with expansion 
\begin{align*}
  \lambda(z)=16q^{1/2} - 128q + 704q^{3/2} - 3072q^2 + 11488q^{5/2} - 38400q^3 +
  \dots.
\end{align*}
As a uniformizer for a congruence subgroup of $\PSL_2(\Z)$, the function
$\lambda (z)$ has a modular interpretation: there is a family of elliptic curves
over $X(2)$ equipped with extra structure.  Specifically, given $\lambda \in
\PP^1(\C) \setminus \{0,1,\infty\}$, the corresponding elliptic curve with extra
structure is given by the \defi{Legendre curve}
\begin{align*}
  E : y^2 = x(x-1)(x-\lambda)  ,
\end{align*}
equipped with the isomorphism $(\Z / 2 \Z)^2 \xrightarrow{\sim} E[2]$ determined
by sending the standard generators to the $2$-torsion points $(0,0)$ and
$(1,0)$.

There is a forgetful map that forgets this additional torsion structure on a
Legendre curve and remembers only isomorphism class; on the algebraic level,
this corresponds to an expression of $j$ in terms of $\lambda$, which is given
by
\begin{equation} \label{eqn:jlambda}
  j(\lambda) = 256\frac{(\lambda^2-\lambda+1)^3}{\lambda^2(\lambda-1)^2};
\end{equation}
indeed, the map $X(2) \to X(1)$ given by $j/1728$ is a Galois \Belyi\ map of
degree $6$ with monodromy group $S_3$, given explicitly by \eqref{eqn:jlambda}.
This map is the Galois closure of the map computed in Example \ref{exm:firstex}.

The cusp $\infty$ plays a special role in the theory of modular forms, and
marking it in our correspondence will allow a suitable rigidification. With this
modification, the correspondence \eqref{eq:freecorresp} becomes a bijection
\begin{equation}\label{eq:G2corresp}
  \begin{gathered} 
    \left\{
      \begin{array}{c}
        \text{transitive permutation triples $\sigma \in S_d$}
        \\
        \text{with a marked cycle of $\sigma_{\infty}$}
      \end{array}
    \right\} /\!\sim
    \\
    \updownarrow \text{\small{1:1}}
        \\
    \left\{ \text{subgroups of $F_2 \cong \Gamma(2)$ of index $d$} \right\}
    /\!\sim
  \end{gathered}
\end{equation}
with equivalence relations as follows: given $\Gamma , \Gamma' \leq \Gamma(2)$,
we have $\Gamma \sim \Gamma'$ if and only if $g \Gamma g^{-1} = \Gamma'$ for
$g$ an element of the subgroups of translations generated by $z \mapsto z + 2$;
and two triples $\sigma,\sigma' \in S_d^3$ with marked cycles $c,c'$ in
$\sigma_{\infty},\sigma'_{\infty}$ are equivalent if and only if they are
simultaneously conjugate by an element $\tau$ with $\tau c \tau^{-1} = c'$.

It is a marvelous consequence of either of the bijections \eqref{eq:freecorresp}
and \eqref{eq:G2corresp}, combined with \Belyi{}'s theorem, that any curve $X$
defined over a number field is uniformized by a subgroup $\Gamma \leq \Gamma(2)
< \PSL_2(\Z)$, so that there is a uniformizing map $\Gamma \backslash \calH^*
\xrightarrow{\sim} X(\C)$.  This is the meaning of Grothendieck's comment: the
rigidification here corresponds to the subgroup $\Gamma$.  In general, the group
$\Gamma$ is \defi{noncongruence}, meaning that it does not contain a subgroup
$\Gamma(N)$, so membership in the group cannot be determined by congruences on
the coordinate entries of the matrices.  This perspective of modular forms is
taken by Atkin--Swinnerton-Dyer \cite{ASD} and Birch \cite[Theorem 1]{Birch} in
their exposition of this subject: they discuss the relationship between modular
forms, the Atkin--Swinnerton-Dyer congruences for noncongruence modular forms,
and Galois representations in the context of \Belyi\ maps.  For more on the
arithmetic aspects of this subject, we refer to the survey by Li--Long--Yang
\cite{LLY} and the references therein.  

The description \eqref{eq:G2corresp} means that one can work quite explicitly
with the Riemann surface associated to a permutation triple.  Given a triple
$\sigma$, the uniformizing group $\Gamma$ is given as the stabilizer of $1$ in
the permutation representation $\Gamma(2) \to S_d$ given by $x,y,z \mapsto
\sigma_0,\sigma_1,\sigma_\infty$ as in \eqref{eq:G2corresp}.  A fundamental
domain for $\Gamma$ is given by \defi{Farey symbols} \cite{KurthLong}, including
a reduction algorithm to this domain and a presentation for the group $\Gamma$
together with a solution to the word problem in $\Gamma$.  These algorithms have
been implemented in the computer algebra systems \texttt{Sage} \cite{Sage} (in a
package for \emph{arithmetic subgroups defined by permutations}, by Kurth,
Loeffler, and Monien) and \texttt{Magma} \cite{Magma} (by Verrill).

Once the group $\Gamma$ has been computed, and the curve $X=\Gamma \backslash
\calH^*$ is thereby described, the \Belyi\ map is then simply given by the
function
\begin{align*}
  \lambda: X \to X(2) \cong \PP^1 ,
\end{align*}
so one immediately obtains an analytic description of \Belyi\ map.  In order to
obtain explicit equations, one needs meromorphic functions on $X$, which is to
say, meromorphic functions on $\calH$ that are invariant under $\Gamma$.  

We are led to the following definition.  Let 
$\Gamma \leq \PSL_2(\Z)$ be a subgroup of finite index.  
A \defi{modular form} for $\Gamma \leq \PSL_2(\Z)$ 
of \defi{weight} $k \in 2\Z$ is a holomorphic function $f: \calH \to \C$ such that 
\begin{equation} \label{eqn:automorphy}
  f(\gamma z)=(cz+d)^k f(z) \quad \text{ for all $\gamma=\pm \begin{pmatrix} a &
    b \\ c & d \end{pmatrix} \in \Gamma$} 
\end{equation}
and such that the limit $\lim_{z \to c} f(z)=f(c)$ exists for all \defi{cusps}
$c \in \Q \cup \{\infty\}=\PP^1(\Q)$ (with the further technical condition that
as $z \to \infty$, we take only those paths that remain in a bounded vertical
strip).  A \defi{cusp form} is a modular form where $f(c)=0$ for each cusp $c$.
The space $S_k(\Gamma)$ of cusp forms for $\Gamma$ of weight $k$ is a
finite-dimensional $\C$-vector space.  If $\Gamma$ is torsion-free or $k=2$,
then there is an isomorphism
\begin{equation} \label{eqn:SKOmega}
  \begin{aligned}
    S_k(\Gamma) &\xrightarrow{\sim} \Omega^{k/2}(X) \\
    f(z) &\mapsto f(z)\,(dz)^{\otimes k/2}
  \end{aligned}
\end{equation}
where $\Omega^{k/2}(X)$ is the space of holomorphic differential $(k/2)$-forms
on $X$.  In any case, evaluation on a basis for $S_k(\Gamma)$ defines a
holomorphic map $\varphi:X \to \PP^{r-1}$ where $r=\dim_\C S_k(\Gamma)$,
whenever $r \geq 1$.  Classical theory of curves yields a complete description
of the map $\varphi$; for example, for generic $X$ of genus $g \geq 3$, taking
$k=2$ (i.e., a basis of holomorphic $1$-forms) gives a \defi{canonical
embedding} of $X$ as an algebraic curve of degree $2g-2$ in $\PP^{g-1}$, by the
theorem of Max Noether.

Selander--Str\"ombergsson \cite{SelanderStrombergsson} use this analytic method
of modular forms to compute \Belyi\ maps; this idea was already present in the
original work of Atkin--Swinnerton-Dyer \cite{ASD} and was developed further by
Hejhal \cite{Hejhal} in the context of Maass forms.   Starting with the analytic
description of a subgroup $\Gamma \leq \Gamma(2)$, they compute a hyperelliptic
model of a curve of genus $2$ from the knowledge of the space $S_2(\Gamma)$ of
holomorphic cusp forms of weight 2 for $\Gamma$.  These cusp forms are
approximated to a given precision by truncated $q$-expansions 
\begin{equation} \label{eqn:fzanqn}
  f(z)=\sum_{n=0}^{N} a_n q^n, 
\end{equation}
one for each equivalence class of cusp $c$ and corresponding local parameter $q$
under the action of $\Gamma$.  These expansions \eqref{eqn:fzanqn} have
undetermined coefficients $a_n \in \C$, and the equation \eqref{eqn:automorphy}
implies an approximate \emph{linear} condition on these coefficients for any
pair of $\Gamma$-equivalent points $z,z'$.  These linear equations can then be
solved using the methods of numerical linear algebra.  This seems to work well
in practice, and once complex approximations for the cusp forms are known, the
approximate algebraic equations that they satisfy can be computed, so that after
a further Newton iteration and then lattice reduction one obtains an exact
solution.  Atkin--Swinnerton-Dyer say of this method \cite[p. 8]{ASD}:
\begin{quote}
  From the viewpoint of numerical analysis, these equations are of course very
  ill-conditioned.  The power series converge so rapidly that one must be
  careful not to take too many terms, and the equality conditions at adjacent
  points in a subdivision of the sides are nearly equivalent. However, by
  judicious choice of the number of terms in the power series and the number of
  subdivision points, for which we can give no universal prescription, we have
  been able to determine the first 8 or so coefficients [...]\ with 7
  significant figures in many cases.
\end{quote}

\begin{ques}
  Does this method give rise to an \emph{algorithm} to compute \Belyi\ maps?  In
  particular, is there an explicit estimate on the numerical stability of this
  method?
\end{ques}

For \Belyi\ maps such that the corresponding subgroup $\Gamma$ is congruence,
methods of \defi{modular symbols} \cite{Cremona,Stein} can be used to determine
the $q$-expansions of modular forms using exact methods.  The Galois groups of
congruence covers are all subgroups of $\PGL_2 (\Z / N \Z)$ for some integer
$N$, though conversely not all such covers arise in this way; as we will see in
the next subsection, since $\PSL_2 (\Z)$ has elliptic points of order $2$ and
$3$, a compatibility on the orders of the ramification types is required.
Indeed, ``most'' subgroups of finite index in $\PSL_2(\Z)$ (in a precise sense)
are noncongruence \cite{JonesStAndrews}. 

\begin{exm} \label{exm:noncong}
  To give a simple example, we consider one of the two (conjugacy classes of)
  noncongruence subgroups of index $7$ of $\PSL_2 (\Z)$, the smallest possible
  index for a noncongruence subgroup by Wohlfarht \cite{Wohlfarht}.  The cusp
  widths of this subgroup are $1$ and $6$.  The information on the cusps tells
  us that the ramification type of the \Belyi\ map above $\infty$ is given by
  $(6,1)$, whereas the indices above $0$ (resp.\ $1$) have to divide $3$ (resp.\
  $2$).  This forces the genus of the dessin to equal $0$, with ramification
  triple $(6^1 1^1,3^2 1^1, 2^3 1^1)$.

  There are exactly two transitive covers with this ramification type, both with
  passport $(0,G,(2^3 1^1,3^2 1^1,6^1 1^1))$. Here the monodromy group $G$ is
  the Frobenius group of order $42$; the two covers correspond
  to two choices of conjugacy classes of order $6$ in $G$.
  For one such choice, we obtain the following unique solution up to conjugacy:
  \begin{align*}
    \sigma_0=(1\ 2)(3\ 4)(6\ 7), \quad \sigma_1=(1\ 2\ 3)(4\ 5\ 6), \quad
    \sigma_{\infty}=(1\ 4\ 7\ 6\ 5\ 3).
  \end{align*}

  A fundamental domain for the action of $\Gamma=\Gamma_7$ is as follows.

  \begin{equation} \label{fig:gamma7} \notag
    \includegraphics[scale=1.2]{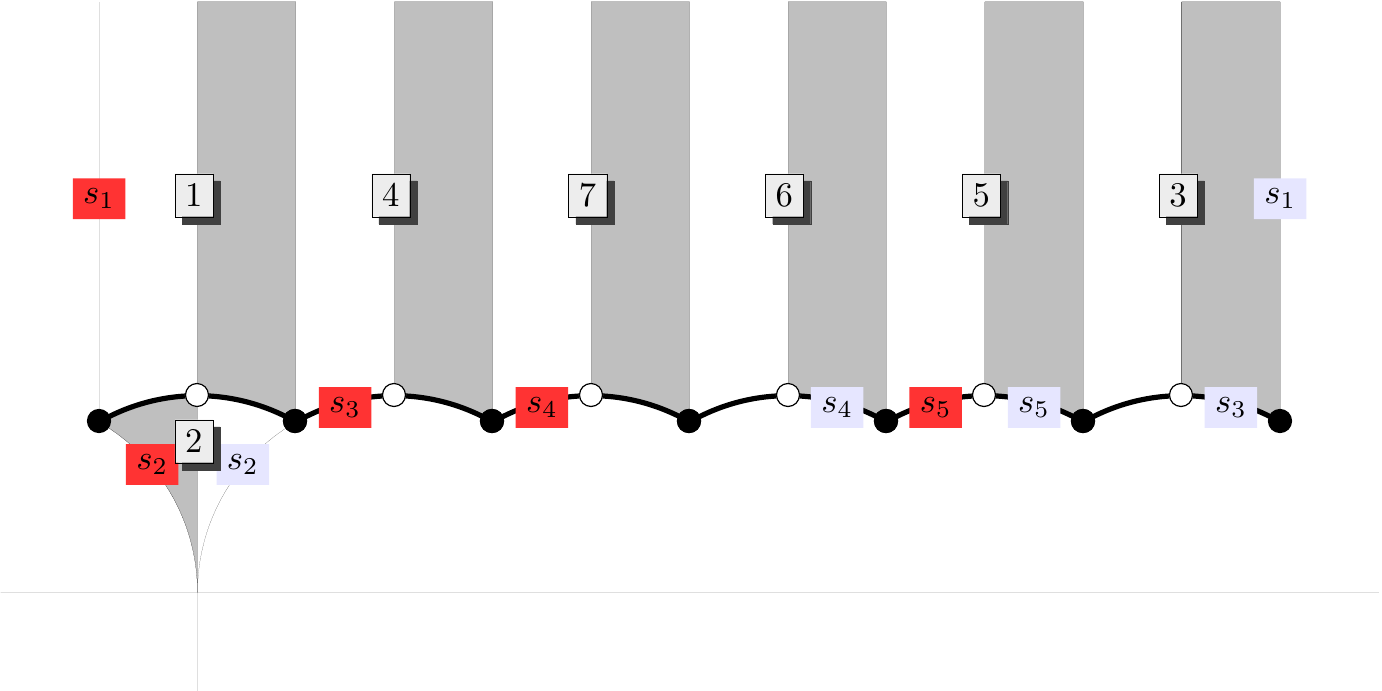}
  \end{equation}

 \begin{center}
    \begin{tabular}{ll}
      \toprule
      Label & Coset Representative\\
      \midrule
      $1$ & $\begin{pmatrix} 1 & 0 \\ 0 & 1 \end{pmatrix}$ \\
      $2$ & $\begin{pmatrix} 0 & -1 \\ 1 & 0 \end{pmatrix}$ \\
      $3$ & $\begin{pmatrix} 1 & 5 \\ 0 & 1 \end{pmatrix}$ \\
      $4$ & $\begin{pmatrix} 1 & 1 \\ 0 & 1 \end{pmatrix}$ \\
      $5$ & $\begin{pmatrix} 1 & 4 \\ 0 & 1 \end{pmatrix}$ \\
      $6$ & $\begin{pmatrix} 1 & 3 \\ 0 & 1 \end{pmatrix}$ \\
      $7$ & $\begin{pmatrix} 1 & 2 \\ 0 & 1 \end{pmatrix}$ \\
      \bottomrule
      \end{tabular}
      \hfill
      \begin{tabular}{ll}
      \toprule
      Label & Side Pairing Element\\
      \midrule
      $s_{1}$ & $\begin{pmatrix} 1 & 6 \\ 0 & 1 \end{pmatrix}$ \\
      $s_{2}$ & $\begin{pmatrix} 1 & 0 \\ 1 & 1 \end{pmatrix}$ \\
      $s_{3}$ & $\begin{pmatrix} 5 & -6 \\ 1 & -1 \end{pmatrix}$ \\
      $s_{4}$ & $\begin{pmatrix} 3 & -7 \\ 1 & -2 \end{pmatrix}$ \\
      $s_{5}$ & $\begin{pmatrix} 4 & -17 \\ 1 & -4 \end{pmatrix}$ \\
      \bottomrule
    \end{tabular}
  \end{center}

  \begin{center}
    \textbf{Figure \ref{fig:gamma7}}: A fundamental domain and side pairing
    for $\Gamma_7 \leq \Gamma(1)$ of index $7$ \\
    \vspace{2ex}
  \end{center}
  \addtocounter{equation}{1}

  We put the cusp of $\Gamma(1)$ at $t=\infty$ and the elliptic point of order
  $3$ (resp.\ $2$) at $t=0$ (resp.z $t=1$).  After this normalization, the $q$-expansion for the
  Hauptmodul $t$ for $\Gamma$ is given by
  \begin{align*}
    t(q)=\frac{1}{\zeta} + 0 + \frac{9+\sqrt{-3}}{2^1 3^4} \zeta +
    \frac{-3-5\sqrt{-3}}{2^2 3^5} \zeta^2 + \frac{1-3\sqrt{-3}}{2^1
    3^7} \zeta^3 + \dots
  \end{align*}
  where $\zeta=\eta q^{1/6}$ and
  \begin{align*}
    \eta^6 = \frac{3^{10}}{7^7}(-1494+3526\sqrt{-3}).
  \end{align*}
  From this, we compute using linear algebra the algebraic relationship between
  $t(q)$ and $j(q)$, expressing $j(q)$ as a rational function in $t(q)$ of
  degree $7$:
  \begin{align*}
    j = -\frac{2^6(1+\sqrt{-3})}{(5-\sqrt{-3})^7}\frac{(54\sqrt{-3}t^2 +
      18\sqrt{-3}t+(5-3\sqrt{-3}))^3(6\sqrt{-3}t -
      (1+3\sqrt{-3}))}{(6\sqrt{-3}t-(1+3\sqrt{-3}))}.
  \end{align*}
  We will compute this example again using $p$-adic methods in the next section 
  (Example \ref{ex:noncong-padic}).
\end{exm}

\subsection*{Modular forms on subgroups of triangle groups}

There is related method that works with a \emph{cocompact} discrete group
$\Gamma \leq \PSL_2(\R)$, reflecting different features of \Belyi\ maps.
Instead of taking the free group on two generators, corresponding to the
fundamental group of $\PP^1 \setminus \{0,1,\infty\}$, we instead consider
orbifold covers arising from triangle groups, a subject of classical interest
(see e.g.\ Magnus \cite{Magnus}).  For an introduction to triangle groups,
including their relationship to \Belyi\ maps and dessins, see the surveys of
Wolfart \cite{WolfartDessins,WolfartObvious}.

Let $a,b,c \in \Z_{\geq 2} \cup \{\infty\}$.  We define the \defi{triangle
group} 
\begin{align*}
  \Delta(a,b,c) = \la \delta_0,\delta_1,\delta_{\infty} \mid
  \delta_0^a=\delta_1^b=\delta_{\infty}^c=\delta_0\delta_1\delta_{\infty}=1 \ra
\end{align*}
where infinite exponents $a,b,c$ are ignored in the relations. Let
$\chi(a,b,c)=1/a+1/b+1/c-1\in \Q$.  For example, we have $\Delta(2,3,\infty)
\cong \PSL_2(\Z)$ and $\Delta(\infty,\infty,\infty) \cong F_2 \cong \Gamma(2)$,
so this construction generalizes the previous section.  The triangle group
$\Delta(a,b,c)$ is the index $2$ orientation-preserving subgroup of the group
generated by the reflections in the sides of a triangle $T(a,b,c)$ with angles
$\pi/a,\pi/b,\pi/c$ drawn in the geometry $H$, where $H=\PP^1,\C,\calH$
according as $\chi(a,b,c)$ is positive, zero, or negative.

Associated to a transitive permutation triple $\sigma$ from $S_d$ is a
homomorphism
\begin{align*} 
  \Delta(a,b,c) &\to S_d \\
  \delta_0,\delta_1,\delta_{\infty} &\mapsto \sigma_0,\sigma_1,\sigma_\infty 
\end{align*}
where $a,b,c \in \Z_{\geq 2}$ are the orders of
$\sigma_0,\sigma_1,\sigma_\infty$, respectively. (Here we have no index
$\infty$, so $\Delta(a,b,c)$ is cocompact, which is where this method diverges
from that using classical modular forms.) The stabilizer of a point $\Gamma \leq
\Delta(a,b,c)$ has index $d$, and the above homomorphism is recovered by the
action of $\Delta$ on the cosets of $\Gamma$.  The quotient map 
\begin{align*}
  \varphi: X = \Gamma \backslash H \to \Delta \backslash H
\end{align*}
then realizes the \Belyi\ map with monodromy $\sigma$, so from this description
we have a way of constructing the \Belyi\ map associated to $\sigma$. More
precisely, as in \eqref{eq:freecorresp}, the bijection \eqref{eq:corresp}
generalizes to
\begin{equation}\label{eq:tricorresp}
  \begin{gathered} 
    \left\{
      \begin{array}{c}
        \text{permutation triples $\sigma = (\sigma_0, \sigma_1,
          \sigma_{\infty}) \in S_d^3$}
        \\
        \text{such that $a,b,c$ are multiples of the orders of
          $\sigma_0,\sigma_1,\sigma_\infty$}
      \end{array}
    \right\} /\!\sim
    \\
    \stackrel{1:1}{\longleftrightarrow}
    \\
    \left\{ \text{subgroups of $\Delta(a,b,c)$ of index $n$} \right\} /\!\sim  ,
  \end{gathered}
\end{equation}
where the equivalences are as usual: conjugacy in the group $\Delta (a,b,c)$ and
simultaneous conjugacy of triples $(\sigma_0 , \sigma_1 , \sigma_{\infty})$.
(In particular, these triples are not marked, as by contrast they are in
\eqref{eq:G2corresp}, though certainly our construction could be modified in
this way if so desired.)

Explicitly, one obtains the Riemann surfaces corresponding to a subgroup $\Gamma
<\Delta(a,b,c)$ under the bijection \eqref{eq:tricorresp} by 
gluing together triangles $T(a,b,c)$ and making
identifications. This gives a conformally correct way to draw dessins and a
method for computing the covers themselves numerically.  

This method has been developed in recent work of Klug--Musty--Schiavone--Voight
\cite{KMSV}.  Algorithms are provided for working with the corresponding
triangle group $\Delta$, determining explicitly the associated finite index
subgroup $\Gamma$, and then drawing the dessin on $H$ together with the gluing
relations that define the quotient $X=\Gamma \backslash H$.  From this explicit
description of the Riemann surface (or more precisely, Riemann $2$-orbifold) $X$
one obtains equations for the \Belyi\ map $f$ numerically.  The main algorithmic
tool for this purpose is a generalization of Hejhal's method replacing
$q$-expansions with power series expansions, due to Voight--Willis
\cite{VoightWillis}.  This method works quite well in practice; as an
application, a \Belyi\ map of degree $50$ of genus $0$ regularly realizing the
group $\PSU_3(\F_5)$ over $\Q(\sqrt{-7})$ is computed.  

\begin{exm}
  Consider the permutation triple $\sigma = (\sigma_0 , \sigma_1 ,
  \sigma_{\infty})$, where
  \begin{align*}
    \sigma_0 &= (1\ 7\ 4\ 2\ 8\ 5\ 9\ 6\ 3) \\
    \sigma_1 &=(1\ 4\ 6\ 2\ 5\ 7\ 9\ 3\ 8) \\
    \sigma_\infty &= (1\ 9\ 2)(3\ 4\ 5)(6\ 7\ 8) .
  \end{align*}
  Then $\sigma_0\sigma_1\sigma_\infty=1$ and these permutations generate a
  transitive subgroup \[ G \cong \Z/3\Z ~\wr~\Z/3\Z \leq S_{9} \] of order $81$ and
  give rise to a \Belyi\ map with passport $(0,G,(9^1,9^1,3^3))$.  The corresponding
  group $\Gamma \leq \Delta(9,9,3)=\Delta$ of index $9$ arising from
  \eqref{eq:tricorresp} has signature $(3;-)$, i.e., the quotient $\Gamma
  \backslash \calH$ is a (compact) Riemann surface of genus $3$.  The map
  $X(\Gamma) = \Gamma \backslash \calH \to X(\Delta) = \Delta \backslash \calH
  \cong \PP^1$ gives a \Belyi\ map of degree $9$, which we now compute.

  First, we compute a \defi{coset graph}, the quotient of the Cayley graph for
  $\Delta$ on the generators $\delta_0^{\pm}, \delta_1^{\pm}$ by $\Gamma$ with
  vertices labelled with coset representatives $\Gamma \alpha_i$ for $\Gamma
  \backslash \Delta$.  Given a choice of fundamental domain $D_\Delta$ for
  $\Delta$ (a fundamental triangle and its mirror, as above), such a coset graph
  yields a fundamental domain $D_\Gamma = \bigcup_{i=1}^n \alpha_i D_\Delta$
  equipped with a \defi{side pairing}, indicating how the resulting Riemann
  orbifold is to be glued.  We consider this setup in the unit disc $\calD$,
  identifying $\calH$ conformally with $\calD$ taking a vertex to the center
  $w=0$; the result is Figure \ref{fig:trianglefd}.  We obtain in this way a
  reduction algorithm that takes a point in $z \in \calH$ (or $\calD$) and
  produces a representative $z' \in D_\Gamma$ and $\gamma \in \Gamma$ such that
  $z'=\gamma z$.  

  We consider the space $S_2(\Gamma)$ of cusp forms of weight $2$ for $\Gamma$,
  defined as in \eqref{eqn:automorphy} (but note that since no cusps are present
  we can omit the corresponding extra conditions). As in \eqref{eqn:SKOmega}, we
  have an isomorphism $S_2(\Gamma) \cong \Omega^1(X)$ of $\C$-vector spaces with
  the space of holomorphic $1$-forms on $X$.  Since $X$ has genus $3$, we have
  $\dim_\C S_2(\Gamma)=3$.  We compute a basis of forms by considering power
  series expansions
  \begin{align*}
    f(w) = (1-w)^2 \sum_{n=0}^{\infty} b_n w^n
  \end{align*}
  for $f \in S_2(\Gamma)$ around $w=0$ in the unit disc $\calD$.  (The presence
  of the factor $(1-w)^2$ makes for nicer expansions, as below.)  We compute
  with precision $\epsilon=10^{-30}$, and so $f(w) \approx (1-w)^2
  \sum_{n=0}^{N} b_n w^n$ with $N=815$.  We use the Cauchy integral formula to
  isolate each coefficient $b_n$, integrating around a circle of radius
  $\rho=0.918711$ encircling the fundamental domain. This integral is approximated by summing the evaluations at $O(N)$ points on this circle, which can be explictly represented by elements in the fundamental domain $D_\Gamma$ after using the reduction algorithm.

  \begin{equation} \label{fig:trianglefd} \notag
    \includegraphics[scale=0.88]{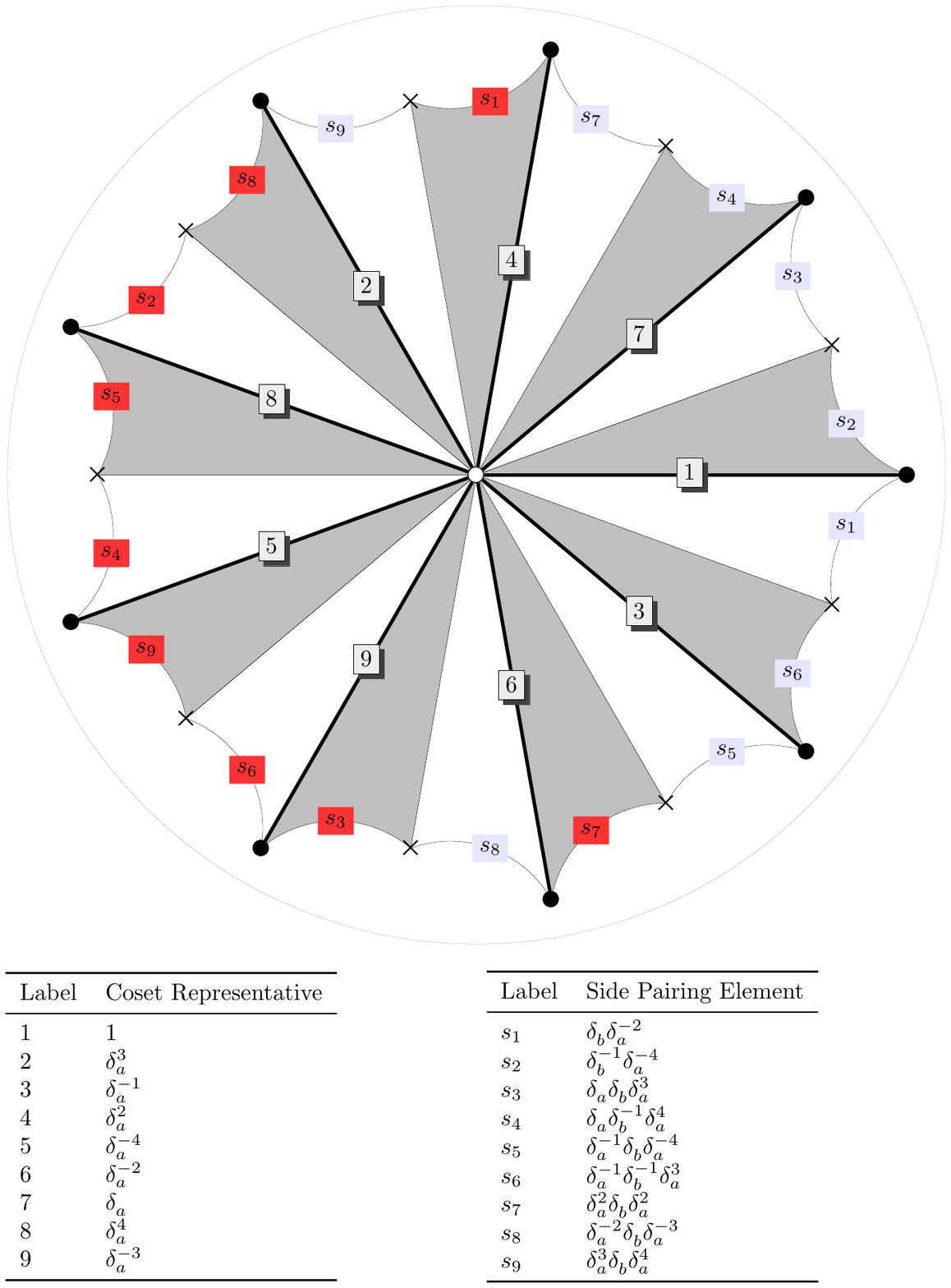}
  \end{equation}

  \begin{center}
    \begin{tabular}{ll}
      \toprule
      Label & Coset Representative\\
      \midrule
      $1$ & 1 \\
      $2$ & $\delta_0^{3}$ \\
      $3$ & $\delta_0^{-1}$ \\
      $4$ & $\delta_0^{2}$ \\
      $5$ & $\delta_0^{-4}$ \\
      $6$ & $\delta_0^{-2}$ \\
      $7$ & $\delta_0^{}$ \\
      $8$ & $\delta_0^{4}$ \\
      $9$ & $\delta_0^{-3}$ \\
      \bottomrule
      \end{tabular}
      \hfill
      \begin{tabular}{ll}
      \toprule
      Label & Side Pairing Element\\
      \midrule
      $s_{1}$ & $\delta_1^{}\delta_0^{-2}$ \\
      $s_{2}$ & $\delta_1^{-1}\delta_0^{-4}$ \\
      $s_{3}$ & $\delta_0^{}\delta_1^{}\delta_0^{3}$ \\
      $s_{4}$ & $\delta_0^{}\delta_1^{-1}\delta_0^{4}$ \\
      $s_{5}$ & $\delta_0^{-1}\delta_1^{}\delta_0^{-4}$ \\
      $s_{6}$ & $\delta_0^{-1}\delta_1^{-1}\delta_0^{3}$ \\
      $s_{7}$ & $\delta_0^{2}\delta_1^{}\delta_0^{2}$ \\
      $s_{8}$ & $\delta_0^{-2}\delta_1^{}\delta_0^{-3}$ \\
      $s_{9}$ & $\delta_0^{3}\delta_1^{}\delta_0^{4}$ \\
      \bottomrule
    \end{tabular}
  \end{center}

  \begin{center}
    \textbf{Figure \ref{fig:trianglefd}}: A fundamental domain and side pairing
    for $\Gamma \leq \Delta(9,9,3)$ of index $9$ \\
    \vspace{2ex}
  \end{center}
  \addtocounter{equation}{1}

  We find the \defi{echelonized} basis
  \begin{align*}
    x(w) = (1-w)^2 &\left( 1 - \frac{40}{6!}(\Theta w)^6 +
      \frac{3080}{9!}(\Theta w)^9 - \frac{1848000}{12!}(\Theta w)^{12} +
      O(w^{15})\right) \\
    y(w) = (1-w)^2 &\left( (\Theta w) + \frac{4}{4!}(\Theta w)^4 +
      \frac{280}{7!}(\Theta w)^{7} - \frac{19880}{10!}(\Theta w)^{10} +
      O(w^{13})\right) \\
    z(w) = (1-w)^2 &\left( (\Theta w)^3 - \frac{120}{6!}(\Theta w)^6 -
      \frac{10080}{9!}(\Theta w)^{9} - \frac{2698080}{12!}(\Theta w)^{12} +
      O(w^{15})\right)
  \end{align*}
  where $\Theta = 1.73179\ldots + 0.6303208\ldots \sqrt{-1}$.  The algebraicity
  and near integrality of these coefficients are conjectural \cite{KMSV}, so
  this expansion is only numerically correct, to the computed precision.

  We now compute the image of the canonical map 
  \begin{align*}
  X(\Gamma) = \Gamma \backslash \calH &\to \PP^2 \\
  w &\mapsto (x(w):y(w):z(w)); 
  \end{align*}
  we find a unique quartic relation
  \begin{align*}
    216 x^3 z - 216 xy^3 + 36 xz^3 + 144y^3 z - 7z^4 = 0
  \end{align*}
  so at least numerically the curve $X$ is nonhyperelliptic.  Evaluating these
  power series at the ramification points, we find that the unique point above
  $f=0$ is $(1: 0: 0)$, the point above $f=1$ is $(1/6: 0: 1)$, and the three
  points above $f=\infty$ are $(0:1:0)$ and $((-1\pm 3\sqrt{-3})/12 : 0 : 1)$.  

  The uniformizing map $f:X(\Gamma) \to X(\Delta) \cong \PP^1$ is given by the
  reversion of an explicit ratio of hypergeometric functions:
  \begin{align*}
    f(w)= -\frac{1}{8}(\Theta w)^9 - \frac{11}{1280}(\Theta w)^{18} -
      \frac{29543}{66150400}(\Theta w)^{27} + O(w^{36}).
  \end{align*}
  Using linear algebra, we find the expression for $f$ in terms of $x,y,z$: 
  \begin{align*}
    f(w) = \frac{-27z^3}{216x^3 - 108x^2z + 18xz^2 - 28z^3}.
  \end{align*}
  Having performed this numerical calculation, we then verify on the curve
  $X(\Gamma)$ that this rational function defines a three-point cover with the
  above ramification points, as in Section \ref{sec:veri}.
\end{exm}

An important feature of methods using modular forms is that it allows a much
more direct algebraic approach to determining the algebraic structure on the
target Riemann surface. There are no ``parasitic'' solutions to discard, just as
when using the more advanced analytic method of Section \ref{sec:coan}.
Moreover, the equation for the source surface are much easier to find than with
the analytic method, where one typically needs to compute period matrices to
high precision.

\begin{ques}
  What are the advantages of the \emph{noncocompact} ($q$-expansions)
  and \emph{cocompact} (power series expansions) approaches relative to one another?  How far (degree,
  genus) can these methods be pushed?  Can either of these methods be made rigorous?
\end{ques}

\section{$p$-adic methods}\label{sec:padic} 

As an alternative to complex analytic methods, we can use $p$-adic methods to
find a solution; in this section we survey this method, and give a rather
elaborate example of how this works in practice. It is simply the $p$-adic
version of the complex analytic method, with the big distinction that finding a
suitable approximation and then Hensel lifting can be much easier; usually
finding a solution over a finite field suffices to guarantee convergence of
Newton approximation.

\subsection*{Basic idea}

The $p$-adic method begins by finding a solution in a finite field of small
cardinality, typically by exhaustive methods, and then lifts this solution using
$p$-adic Newton iteration.  Again, lattice methods can be then employed to
recognize the solution over $\Qbar$.  \emph{Turning the `$p$-adic crank'}, as it
is called, has been a popular method, rediscovered many times and employed in a
number of contexts.  Malle \cite{MalleM22} used this method to compute
polynomials with Galois groups $M_{22}$, $\Aut(M_{22})$, and $\PSL_3(\F_4):2$
over $\Q$.  Elkies \cite{Elkies237} computed a degree $28$ cover $f:X \to \PP^1$
with group $G=\PSL_2(\F_{27})$ via its action on $\PP^1(\F_{27})$ modulo $29$,
and other work of Elkies \cite{ElkiesM23}, Watkins \cite{Watkins} and
Elkies--Watkins \cite{ElkiesWatkins} have also successfully used $p$-adic
methods to compute \Belyi\ maps.  Elkin--Siksek \cite{ElkinSiksek} used this
method and tabulated \Belyi\ maps of small degree. Van Hoeij--Vidunas
\cite{vHV2} used this approach to compute a list of examples whose branching is
nearly regular, before extending the direct method \cite{vHV} as explained in
Section \ref{sec:groeb}. More recently, Bartholdi--Buff--von Bothmer--Kr\"oker
\cite{Bartholdi} computed a \Belyi\ map in genus $0$ that is of degree $13$ and
which arises in a problem of Cui in dynamical systems; they give a relatively
complete description of each of the steps involved.

A foundational result by Beckmann indicates which primes are primes of good
reduction for the \Belyi\ map; which primes, therefore, can be used in the
procedure above.

\begin{thm}[Beckmann \cite{Beckmann}]\label{thm:Beckmann}
  Let $f:X \to \PP^1$ be a \Belyi\ map and let $G$ be the monodromy group of $f$.  
  Suppose that $p \nmid \# G$.  Then there exists a number field $L$ such that
  $p$ is unramified in $L$ and $f$ is defined over $L$ with good reduction at 
  all primes $\frakp$ of $L$ lying over $p$.
\end{thm}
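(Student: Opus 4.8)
The plan is to reduce to the case of a Galois cover and then invoke Grothendieck's theory of the tame fundamental group to spread the cover out over a base that is étale, hence unramified, at $p$. First I would pass to the Galois closure: let $\widetilde{f}:\widetilde{X}\to\PP^1$ be the Galois closure of $f$, a $G$-Galois cover (with $G=\Mon(f)$) branched only over $\{0,1,\infty\}$, and let $H\leq G$ be a point-stabilizer, so that $X=\widetilde{X}/H$ and $f$ is the induced quotient map. Since $\#H\mid\#G$ is prime to $p$, the quotient construction commutes with reduction modulo a prime $\frakp$ and preserves smoothness of the special fibre; thus good reduction of $\widetilde{f}$ over a number field $L$ yields good reduction of $f$ over $L$. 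It therefore suffices to produce $L$, unramified at $p$, over which $\widetilde{f}$ has good reduction.

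Next I would check that the situation is tame and spreads out. The three points $0,1,\infty$ remain pairwise distinct modulo every prime, because their mutual differences are units in $\Z$; hence $\{0,1,\infty\}$ extends to a relative étale divisor $D\subset\PP^1_{\Z}$. The inertia groups of $\widetilde{f}$ are cyclic of order dividing $\#G$, hence of order prime to $p$, so $\widetilde{f}$ is tamely ramified along $D$. Setting $N=\#G$ and $U=\PP^1_{\Z[1/N]}\setminus D$, Grothendieck's specialization theorem for the tame fundamental group (SGA~1) shows that the prime-to-$p$ tame fundamental groups of the geometric fibres of $U\to\Spec\Z[1/N]$ are identified compatibly, so the surjection $\pi_1(\PP^1_{\Qbar}\setminus\{0,1,\infty\})\twoheadrightarrow G$ defining $\widetilde{f}$ is the generic fibre of a finite tame $G$-cover over $\PP^1_{\Z[1/N]}$ whose normalization is smooth over $\Z[1/N]$ along the fibre at $p$. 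This smooth model is precisely good reduction at $p$.

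To control the field of definition, I would package this moduli-theoretically. The $\Qbar$-covers with the prescribed monodromy group and inertia conjugacy classes form the geometric points of a Hurwitz scheme $\calH$ which, by the theory of tame admissible covers, is finite étale over $\Spec\Z[1/N]$. A connected finite étale cover of $\Spec\Z[1/N]$ has the form $\Spec\calO_L[1/N]$ for a number field $L$ unramified outside $N$; taking $L$ to be the residue field of the point of $\calH$ corresponding to $\widetilde{f}$ makes $p$ unramified in $L$, and the pullback of the universal cover along this $L$-point furnishes a model of $\widetilde{f}$, and hence of $f$, over $L$ with good reduction at every $\frakp\mid p$.

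The main obstacle is the passage in the second step from a statement about fundamental groups to an actual smooth integral model: one must know not merely that some cover with matching combinatorial (inertia) data exists in characteristic $p$, but that the characteristic-zero cover genuinely \emph{reduces} to a smooth one, with no unexpected degeneration or extra ramification appearing over $\Z[1/N]$. This rigidity is exactly the deformation-theoretic content of tameness, and it is the same phenomenon that makes $\calH$ finite étale over $\Z[1/N]$ in the third step; that étaleness is what simultaneously supplies good reduction and forces $L$ to be unramified at $p$.
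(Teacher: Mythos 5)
The paper does not actually prove this theorem: it is quoted from Beckmann \cite{Beckmann}, with only a remark recording the stronger conclusion that $p$ is unramified in the field of moduli. So there is no in-text argument to compare against; judged on its own, your outline --- Galois closure, tameness, Grothendieck's specialization of the tame fundamental group, and \'etaleness of a Hurwitz scheme over $\Z[1/\#G]$ --- is the standard route to this result, and its first two steps are sound: the quotient by $H$ behaves well under reduction because $\#H$ is invertible on the base, and tame specialization does produce a smooth model over the strict henselization at $p$.

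The genuine gap is in your third step. The Hurwitz scheme $\calH$ of $G$-covers is in general only a \emph{coarse} moduli space: its residue field at the point corresponding to $\widetilde{f}$ is the field of moduli, not automatically a field of definition, and there is no ``universal cover'' over $\calH$ to pull back unless the parametrized objects are rigid. When $G$ has nontrivial center (for $G$-covers), or when $\Mon(f)$ has nontrivial centralizer in $S_d$ (for mere covers), descent from the field of moduli to a field of definition can be obstructed --- this is precisely the delicacy treated in Section \ref{sec:fomfod}. As written, your argument establishes only the field-of-moduli statement of the Remark, not the existence of the field $L$ in the theorem. The repair is to rigidify before invoking \'etaleness: mark a point of the fibre over a fourth, unbranched, rational base point. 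The automorphism group of a connected cover acts freely on such a fibre, so the pointed Hurwitz scheme is a fine moduli space; it is finite \'etale of degree $d$ over $\calH$, hence still finite \'etale over $\Z[1/\#G]$, and its residue field at the chosen point is then an honest field of definition $L$, unramified at $p$ and carrying a smooth model. With that modification the proof closes.
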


\begin{rmk}
  In fact, Beckmann proves as a consequence that under the hypotheses of the
  theorem, the prime $p$ is unramified in the field of moduli $K$ of $f$.  (For
  the definition of the \defi{field of moduli}, see Section \ref{sec:fomfod}.)
\end{rmk}

If one works with a pointed cover instead, then the statement of Beckmann's
theorem is simpler \cite[Theorem 3]{Birch}.  In the notation of this theorem, if
$p$ divides the order of one of the permutations $\sigma$ then $f$ has bad
reduction at $\frakp$ \cite[Theorem 4]{Birch}. But for those $p$ that divide $\#
G$ but not any of the ramification indices, it is much harder to find methods
(beyond explicit calculation) to decide whether or not a model of $f$ with good
reduction over $\frakp$ exists.  Important work in this direction is due to
Raynaud \cite{Raynaud} and Obus \cite{Obus}.  

\begin{ques}
  Can one perform a similar lifting procedure by determining solutions modulo
  primes where $f$ has bad reduction?
\end{ques}

As the matrix of derivatives of the equations used is almost always of full rank
(see Section \ref{sec:groeb}), the most time-consuming part is usually the
search for a solution over a finite field. In order for this method to be
efficient, one must do better than simply running over the potential solutions
over $\F_q$.  Bartholdi--Buff--van Bothmer--Kr\"oker describe \cite[Algorithm
4.7]{Bartholdi} a more careful method for genus $0$, working directly with
univariate polynomials (and rational functions) with coefficients in $\F_q$. In
the example below, we show an approach that is similar in spirit to theirs and
that works for hyperelliptic curves as well.

When the field of definition is ``generic'' in some sense, then there is often a
split prime of small norm, so this method is often efficient in practice. The
following question still merits closer investigation.

\begin{ques}
  How efficiently can a \Belyi\ map be computed modulo a prime $p$? How far can
  one reduce the dimension of the affine space employed in the enumeration? 
\end{ques}

In particular, can a ``partial projection'' (partial Gr\"obner basis) be
computed efficiently to reduce the number of looping variables?

\begin{exm} \label{ex:noncong-padic}
  We return to the \Belyi\ maps with ramification type $(6^1 1^1, 3^2 1^1, 2^3
  1^1)$ considered in Example \ref{exm:noncong}.  

  Theorem \ref{thm:Beckmann} suggests to reduce modulo $5$ first. We put the
  ramification type $(6,1)$ over $\infty$ and the corresponding points at
  $\infty$ and $0$; we can do this without risking an extension of the field of
  definition since these points are unique. In the same way, we put the type
  $(3^2,1)$ over $0$ and the single point in this fiber at $1$. This defines a
  reasonably small system over $\F_5$ of dimension $7$, which could even be
  checked by enumeration. We get the solutions
  \begin{align*}
    f(t) = \frac{\alpha^{8} (t - 2)^3 (t + \alpha)^3 (t - 1)}{t}
  \end{align*}
  and its conjugate, where $\alpha$ is a root of the Conway polynomial defining
  $\F_{5^2}$ over $\F_5$, i.e., $\alpha^2-\alpha+2=0$. At the prime $13$, we
  get two solutions defined over $\F_{13}$:
  \begin{align*}
    f(t) = \frac{-3 (t^2 + 3t + 8)^3 (t - 1)}{t}, \quad f(t)= \frac{2 (t^2 +
      6)^3 (t - 1)}{t}.
  \end{align*}

  In both cases, the derivative matrices of the equations (with or without ASD)
  are non-singular, so we can lift to the corresponding unramified $p$-adic
  fields. After a few iterations of the second pair of solutions, we get
  the $13$-adic approximations
  \begin{align*}
    f(t) &= (-3 -5 \cdot 13 - 13^2 + \dots )(t - 1)t^{-1}  \\
    &\qquad \cdot (t^2 + (3+8\cdot 13 - 2\cdot 13^2+\dots)t + (8 - 3\cdot 13 -
      6\cdot 13^2 + \dots))^3 \\
    f(t) &= (2 -3 \cdot 13 + 3 \cdot 13^2 + \dots)(t - 1)t^{-1}  \\
    &\qquad \cdot (t^2 + (-4 \cdot 13 + 6 \cdot 13^2 + \dots) t + (6 - 3 \cdot
      13^2 + \dots))^3.
  \end{align*}

  We continue, with quadratically growing accuracy, in order to use LLL in the
  end. This suggests a pair of solutions over $\Q (\sqrt{-3})$ given by
  \begin{align*}
    f(t) = \frac{-1+\sqrt{-3}}{4\sqrt{-3}^3 (\sqrt{-3} + 2)^7} \frac{(162 t^2 +
      18 (-\sqrt{-3} - 6) t + (\sqrt{-3} + 3))^3 (t - 1)}{t}
   \end{align*}
  and its conjugate. One verifies as in Section \ref{sec:veri} that this yields
  a solution over $\Q(\sqrt{-3})$ to the given equations and that they are the
  requested \Belyi\ maps.  Though we stop here, one could further simplify the
  equation even further by suitable scalar multiplications in $t$, or even
  better, the general methods described in Section \ref{sec:veri}.
\end{exm}

\begin{exm}
We now illustrate the complexities involved in employing the above method in an
example. It arose during a study of Galois \Belyi\ maps with monodromy group
$\PSL_2(\F_q)$ or $\PGL_2(\F_q)$, undertaken by Clark--Voight
\cite{ClarkVoight}. 

Consider the passport with uniform ramification orders $3,5,6$ and monodromy
group $G = \PSL_2(\F_{11}) \leq S_{11}$. Here the embedding of $G$ in $S_{11}$
results from its conjugation action on the cosets of its exceptional subgroup $A_5$
(and indeed $\# G /\#A_5 = 660/60 = 11$).

Let $f : E \to \PP^1$ be the degree $11$ \Belyi\ map defined by the above data,
and let $\varphi: X \to \PP^1$ be its Galois closure, with Galois group $G$.  We
anticipate \cite{ClarkVoight} that $\varphi$ with its Galois action is defined
over an at most quadratic extension of $\Q(\sqrt{3},\sqrt{5})$, in which case by
the Galois correspondence the quotient map $f$ will be defined over the same field. We confirm this
by direct computation.

Using the representation of $G$ above, we find that $f$ has 
passport 
\[ (1,\PSL_2(\F_{11}),( 3^3 1^2 , 5^2 1^1 , 6^1 3^1 2^1)); \] 
in accordance with the construction above, the ramification orders are
divisors of $3,5,6$, and $E$ has genus $1$.  

We distinguish the point of ramification degree $6$ above $\infty$ and obtain a
corresponding group law on $E$. We fix two more points by taking the other
points above $\infty$ (with ramification $3$ and $2$, respectively) to be
$(0,1)$ and $(1,y_1)$. We write the equation
\begin{align*}
  y^2 = \pi_3 x^3 + \pi_2 x^2 + (y_1^2-\pi_3-\pi_2-1)x+1 = \pi(x) 
\end{align*}
for the curve $E$. The \Belyi\ function $f$ has the form
\begin{align*}
  f (x,y) = \frac{q(x)+r(x)y}{(x-1)^2 x^3} 
\end{align*}
where $q(x)=q_8 x^8 +\dots+q_0$ and $r(x)=r_6x^6+\dots+r_0$ have degree $8,6$
respectively and the numerator $f\sbnum(x,y)=q(x)+r(x)y$ vanishes to degree $3$
at $(0,-1)$ and $2$ at $(0,-y_1)$.  

By the ramification description above $0$, we must have
\begin{equation}\label{phinum}
  \begin{gathered}
    \begin{aligned}
      \N_{\Qbar (x,y) / \Qbar (x)} ( f\sbnum(x,y) ) & = q(x)^2 - r(x)^2 \pi(x) \\
      & = q_8^2 x^3 (x-1)^2 s(x)^3 t(x)
    \end{aligned}
  \end{gathered}
\end{equation}
where $s(x)=x^3+s_2x^2+s_1x+s_0$ and $t(x) = x^2 + t_1 x + t_0$, and similarly
above $1$ we should have
\begin{equation}\label{phinum1}
  \begin{gathered}
    \begin{aligned}
      \N_{\Qbar (x,y) / \Qbar (x)} ((f(x,y)-1)\sbnum) & = (q(x)-(x-1)^2x^3)^2 -
      r(x)^2 \pi(x) \\
      & = q_8^2 x^3 (x-1)^2 u(x) v(x)
    \end{aligned}
  \end{gathered}
\end{equation}
where $u(x) = x^2 + u_1 x + u_0$ and $v(x) = x + v_0$.

An approach using Gr\"obner basis techniques utterly fails here, given the
number of variables involved.  This calculation is also made more difficult by
the possibility that other \Belyi\ covers will intervene: the Mathieu group
$M_{11} \hookrightarrow S_{11}$ also has a $(3,5,6)$ triple of genus $1$, and it
is a priori conceivable that $S_{11}$ occurs as well.  Discarding these
parasitic solutions is a nontrivial task until one has already computed all of
them along with the correct ones, just as in Section \ref{sec:groeb}.

As explained above, we search for a solution in a finite field $\F_q$, lift such
a solution using Hensel's lemma (if it applies), and then attempt to recognize
the solution $p$-adically as an algebraic number using the LLL lattice reduction
algorithm.  The primes of smallest norm in the field $\Q(\sqrt{3},\sqrt{5})$
that are relatively prime to $\# \PSL_2(\F_{11})$ have norm $q=49,59$, so there
is no hope of simply running over all the $\F_q$-rational values in the affine
space in $y_1 , \pi , q , r , s , t , u , v$, which is $28$-dimensional.

We speed up the search with a few tricks.  Subtracting the two equations
(\ref{phinum})--(\ref{phinum1}), we have
\begin{align*}
  q_8^2 s(x)^3 t(x) - 2q(x) + (x-1)^2 x^3 = r_8^2 u(x)^5v(x). 
\end{align*}
Comparing coefficients on both sides, by degree we see that the coefficients of
$x^9$ and $x^{10}$ of $s(x)^3t(x)$ and $u(x)^5v(x)$ must agree.  So we
precompute a table of the possible polynomials of the form $u(x)^5 v(x)$; there
are $O(q^3)$ such, and we sort them for easy table lookup.  Then, for each of
the possible polynomials of the form $s(x)^3 t(x)$, of which there are
$O(q^5)$, we match the above coefficients. Typically there are few matches.
Then for each $q_8^2 \in \F_q^{\times 2}$, we compute $q(x)$ as 
\begin{align*}
  q(x) = \frac{1}{2} \left(q_8^2 s(x)^3 t(x) - q_8^2 u(x)^5v(x) - (x-1)^2
  x^3\right). 
\end{align*}
From equation (\ref{phinum}) we have
\begin{align*}
  q(x)^2 - q_8^2(x-1)^2x^3 s(x)^3 t(x) = \pi(x)r(x)^2  ,
\end{align*}
so we compute the polynomial on the right and factor it into squarefree parts.
If the corresponding $\pi(x)$ has degree $3$, then we find $r(x)$ as well,
whence also our solution.

Putting this on a cluster at the Vermont Advanced Computing Center (VACC) using
\textsf{Magma} \cite{Magma}, after a few days we have our answer.  We find
several solutions in $\F_{49}$ but only one solution lifts $p$-adically without
additional effort; it turns out the Jacobian of the corresponding system of
equations is not of full rank. After some effort (see also Section
\ref{sec:veri}), we recognize this cover as an $M_{11}$-cover with ramification
$(3,5,6)$, defined over the number field $\Q(\alpha)$ where 
\begin{align*}
  \alpha^7 - \alpha^6 - 8\alpha^5 + 21\alpha^4 + 6\alpha^3 - 90\alpha^2 +
  60\alpha + 60 = 0. 
\end{align*}

We find $62$ solutions in $\F_{59}$. Note that the $M_{11}$-covers above do not
reappear since there is no prime of norm $59$ in $\Q(\alpha)$. Only $8$ of
these solutions yield covers with the correct ramification data; our above
conditions are necessary, but not sufficient, as we have only considered the
$x$-coordinates and not the $y$-coordinates. These $8$ covers lift to a single
Galois orbit of curves defined over the field $\Q(\sqrt{3},\sqrt{5},\sqrt{b})$
where
\begin{align*}
  b=4\sqrt{3} + \frac{11+\sqrt{5}}{2}; 
\end{align*}
with $N(b)=11^2$; more elegantly, the extension of $\Q(\sqrt{3},\sqrt{5})$ is
given by a root $\beta$ of the equation
\begin{align*}
  T^2 - \frac{1+\sqrt{5}}{2}T - (\sqrt{3}+1) = 0. 
\end{align*}

The elliptic curve $E$ has minimal model
\begin{align*}
  &y^2 + ((\textstyle{\frac{1}{2}}(13\sqrt{5} + 33)\sqrt{3} +
    \textstyle{\frac{1}{2}}(25\sqrt{5} + 65))\beta +
    (\textstyle{\frac{1}{2}}(15\sqrt{5} + 37)\sqrt{3} + (12\sqrt{5} + 30)))xy \\
  &\quad + (((8\sqrt{5} + 15)\sqrt{3} + \textstyle{\frac{1}{2}}(31\sqrt{5} +
    59))\beta + (\textstyle{\frac{1}{2}}(13\sqrt{5} + 47)\sqrt{3} +
    \textstyle{\frac{1}{2}}(21\sqrt{5} + 77)))y  \\
  &\quad = x^3 + ((\textstyle{\frac{1}{2}}(5\sqrt{5} + 7)\sqrt{3} +
    \textstyle{\frac{1}{2}}(11\sqrt{5} + 19))\beta +
    (\textstyle{\frac{1}{2}}(3\sqrt{5} + 17)\sqrt{3} + (2\sqrt{5} + 15)))x^2  \\
  &\quad + ((\textstyle{\frac{1}{2}}(20828483\sqrt{5} + 46584927)\sqrt{3} +
    \textstyle{\frac{1}{2}}(36075985\sqrt{5} + 80687449))\beta \\
  &\qquad    +(\textstyle{\frac{1}{2}}(21480319\sqrt{5} + 48017585)\sqrt{3} +
    \textstyle{\frac{1}{2}}(37205009\sqrt{5} + 83168909)))x  \\
  &\quad + (((43904530993\sqrt{5} + 98173054995)\sqrt{3} +
    \textstyle{\frac{1}{2}}(152089756713\sqrt{5} + 340081438345))\beta \\
  &\qquad +((45275857298\sqrt{5} + 101240533364)\sqrt{3} + (78420085205\sqrt{5}
    + 175353747591))).
\end{align*}
The $j$-invariant of $E$ generates the field $\Q(\sqrt{3},\sqrt{5},\beta)$, so
this is its minimal field of definition.  This confirms that $\varphi:X \to
\PP^1$ as a $G$-cover is defined over an at most quadratic extension of
$\Q(\sqrt{3},\sqrt{5})$ contained in the ray class field of conductor
$11\infty$, as predicted by the results of Clark--Voight \cite{ClarkVoight}.  

\end{exm}

\section{Galois \Belyi\ maps}\label{sec:galois}

In this short section we sketch some approaches for calculating Galois \Belyi\
maps, i.e., those \Belyi\ maps $f:X \to \PP^1$ corresponding to Galois
extensions of function fields.  The flavor of these computations is completely
different from those in the other sections, as the representation-theoretic
properties of the Galois group involved are used heavily.  In light of the
Galois correspondence, all \Belyi\ maps are essentially known once the Galois
\Belyi\ maps are known; however, the growth in degree between the degree of the
\Belyi\ map and that of its Galois closure makes it very difficult in general to
make this remark a feasible approach to computing general \Belyi\ maps. We
therefore consider the subject only in itself, and even here we limit ourselves
to the general idea: exploiting representations and finding invariant functions.

The Galois \Belyi\ maps in genus $0$ correspond to the regular solids, and can
be computed using the direct method (see the end of Section
\ref{sec:directcalc}).  The most difficult case, that of the icosahedron, was
calculated first by Klein \cite{Klein}.  The Galois \Belyi\ maps in genus $1$
only occur on curves with CM by either $\Q(\sqrt{-3})$ or $\Q(\sqrt{-1})$, and
can therefore be calculated by using explicit formulas for isogenies; see work
of Singerman--Syddall \cite{SingermanSyddall}.

So it remains to consider the case of genus $\geq 2$, where \Belyi\ maps are
related with hyperbolic triangle groups (see Section \ref{sec:mod}).  In genus
$\geq 2$, Wolfart \cite{WolfartObvious} has shown that Galois \Belyi\ maps can
be identified with quotient maps of curves \defi{with many automorphisms}, that
is, those curves that do not allow nontrivial deformations that leave the
automorphism group intact and whose automorphism group therefore defines a
zero-dimensional subscheme of the moduli space of curves $\calM_g$ of genus $g
\geq 2$. Wolfart \cite{WolfartCM} compares these \Belyi\ maps with the related
phenomenon of \defi{Jacobians of CM type}, which define zero-dimensional
subschemes of the moduli space of principally polarized abelian varieties
$\calA_g$. In particular, the CM factors of the Jacobians of the Galois
\Belyi\ curves are essentially known; they come from Fermat curves \cite[\S
4]{WolfartCM}.

A fundamental technique for proving these theorems is to determine the
representation of the automorphism group on the space of differentials, first
considered by Chevalley and Weil \cite{ChevalleyWeil}; this is
elaborated by Berry--Tretkoff \cite{BT} and Streit \cite{StreitHBC}. Once
this is done, one typically recovers the curve by determining the shape of its
canonical embedding, often an intersection of quadrics.  (When the canonical
embedding is not injective, the situation is even simpler; since the
hyperelliptic involution is central in the automorphism group, this reduce to
the calculations in genus $0$ mentioned above.)  The particular form of the
equations is then determined by being fixed under the action of the automorphism
group, which acts by linear transformations.

\begin{ques}
  Can the representation of the automorphism group $G$ on the space of
  differentials be used to give a rigorous algorithm for the computation of
  $G$-Galois \Belyi\ maps (with a bound on the running time)?
\end{ques}

Put another way, computing a Galois \Belyi\ map amounts to determining
$G$-invariant polynomials of a given degree; in some cases, there is a unique
such polynomial with given degree and number of variables, and so it can be
found without any computation.

\begin{exm}\label{exm:gal1}
  We illustrate the invariant theory involved by giving an example of a
  calculation of a quotient map $X \to X / \Aut(X) \cong \PP^1$ that turns out
  to be a \Belyi\ map; the example was suggested to us by Elkies.

  Consider the genus $9$ curve $X$ defined by the following variant of the Bring
  equations:
  \begin{align*}
    v + w  + x  + y  + z  &= 0  ,  \\
    v^2 + w^2  + x^2  + y^2  + z^2 &= 0  ,  \\
    v^4 + w^4  + x^4  + y^4  + z^4 &= 0  .
  \end{align*}
  This curve is known as \defi{Fricke's octavic curve}, and it was studied
  extensively by Edge \cite{Edge}.  There is an obvious linear action by $S_5$
  on this curve by permutation of coordinates. To find coordinates on the
  quotient $X / S_5$ it therefore suffices to look at the symmetric functions in
  the variables $v,w,x,y,z$. We see that the power sums with exponents $1,2,4$
  vanish on $X$.  Since the ring of invariants function for $S_5$ is generated
  by the power sums of degree at most $5$, this suggests that we cook up a
  function from the power sums $p_3$ and $p_5$ of degree $3$ and $5$.  These
  functions do not have the same (homogeneous) degree; to get a well-defined
  function, we consider their quotient $f = (p_3^5 : p_5^3)$ as a morphism from
  $f:X \to \PP^1$.

  The intersection of the hyperplanes defined by $p_3 = 0$ and $p_5 = 0$ with
  $X$ are finite; indeed, this is obvious since the corresponding functions do
  not vanish indentically on $X$. By B\'ezout's theorem, these zero loci are of
  degrees $24,40$.  But whereas in the former case one indeed obtains
  $24$ distinct geometric points in the intersection, one obtains only $20$
  geometric points in the latter case. This shows that the ramification indices
  over $0$ and $\infty$ of the degree $120$ morphism $f$ are $6$ and $5$.

  This is in fact already enough to conclude that there is only one other branch
  point for $q$. Indeed, the orbifold $X / \Aut (X)$ is uniformized by the upper
  half plane $\calH$ since the genus $9$ curve $X$ is, so $X / \Aut (X)$ is a
  projective line with at least $3$ branch points for the quotient by the action
  of $S_5$. On the other hand, the Riemann--Hurwitz formula shows that adding a
  single minimal contribution of $2$ outside the contributions $5$ and $6$
  already known from $\infty$ and $0$ already makes the genus grow to $9$, so
  additional ramification is impossible.  The additional branch point of $f$ can
  be found by considering the divisor of $d f$ on $X$; this point turns out to
  be $- (15/2)^2$. So the morphism $f : X \to \PP^1$ defined by
  \begin{align*}
    f (v,w,x,y,z) = \frac{-2^2 p_3^5}{15^2 p_5^3} = -
    \left(\frac{2}{15}\right)^2 \frac{(v^3 + w^3 + x^3 + y^3 + z^3)^5}{(v^5 +
    w^5 + x^5 + y^5 + z^5)^3}
  \end{align*}
  realizes the quotient $X \to X / S_5$ as a \Belyi\ map. Moreover, we see that
  the Galois action is defined over $\Q$, since it is given by permuting the
  given coordinate functions on $X$.

  In fact we have an isomorphism $\Aut(X) \cong S_5$ since $\Aut (X)$ cannot be
  bigger than $S_5$; such a proper inclusion would give rise to a Fuchsian group
  properly containing the triangle group $\Delta(2,5,6)$, whereas on the other
  hand this group is maximal (by work of Takeuchi \cite{Takeuchi}, or more
  generally see Singerman \cite{Singerman} or Greenberg \cite[Theorem
  3B]{Greenberg}).

  We therefore have found a Galois cover realizing $S_5$ with ramification
  indices $2,5,6$. It turns out that this is the only such cover up to
  isomorphism. Considering the exceptional isomorphism $\PGL_2(\F_5) \cong S_5$,
  we see that our calculation also yields a Galois cover realizing a projective
  linear group.
\end{exm}

\section{Field of moduli and field of definition} \label{sec:fomfod}

Considering Grothendieck's original motivation for studying dessins, it is
important to consider the rather delicate issue of fields of definition of
\Belyi\ maps. In fact this is not only an engaging question on a theoretic
level, but it is also interesting from a practical point of view.  Indeed, as we
have seen in our calculations above, it is often necessary to determine
equations for \Belyi\ maps by recognizing complex numbers as algebraic numbers.
A bound on the degree $K$ is an important part of the input to the LLL algorithm
that is typically used for this. Moreover, having an estimate for the degree of
$K$ is a good indication of how computable a given cover will be---if the
estimate for the size is enormous, we are very unlikely to succeed in practice!

\subsection*{Field of moduli}

For a curve $X$
defined over $\Qbar$, the \defi{field of moduli} $M(X)$ of $X$ is the fixed
field of the group $\{\tau \in \Gal(\Qbar / \Q) : X^{\tau} \cong X\}$ on
$\Qbar$, where as before $X^{\tau}$ is the base change of $X$ by the
automorphism $\tau \in \Gal(\Qbar / \Q)$ (obtained by applying the automorphism
$\tau$ to the defining equations of an algebraic model of $X$ over $\Qbar$). One
similarly defines the field of moduli of a \Belyi\ map: $M(f)$ is the fixed
field of $\{\tau \in \Gal(\Qbar / \Q) : f^{\tau} \cong f\}$ with isomorphisms as
defined in Section 1. 

Now let $f:X \to \PP^1$ be a \Belyi\ map with monodromy representation $\sigma :
F_2 \to S_d$ and monodromy group $G$. By Theorem \ref{thm:galinv}, the monodromy
group $G$ of $f$, considered as a conjugacy class of subgroups of $S_d$, is
invariant under the Galois action. Therefore, given $\tau \in \Gal(\Qbar / \Q)$,
the conjugated morphism $f^\tau:X^\tau \to \PP^1$ is a \Belyi\ map, and its
monodromy representation $\sigma^\tau :F_2 \to S_d$, which is well-defined up to
conjugation, can be taken to have image $G$.  Because the Galois action
preserves the monodromy group up to conjugation and the ramification indices
\cite{JonesStreit},  the \Belyi\ map $f^{\tau}$ has the same passport $P$ as
$f$. We therefore get an action of $\Gal (\Qbar / \Q)$ on the set $S$ of \Belyi\ maps
with passport $P$. Since the stabilizer of an element of $S$ under this action
has index at most $\# S$ in $\Gal (\Qbar / \Q)$, we get the following result.

\begin{prop}\label{prop:merebound}
  Let $f$ be a \Belyi\ map with passport $P$ and field of moduli $K$. Then the
  degree $[K : \Q]$ is bounded above by the size of $P$.
\end{prop}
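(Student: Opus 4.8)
The plan is to read off the result from the setup assembled in the paragraph immediately preceding the statement, combining the Galois-invariance of the passport (Theorem~\ref{thm:galinv}) with the orbit--stabilizer theorem and the Galois correspondence for the infinite extension $\Qbar/\Q$. The crucial structural fact is that $\Gal(\Qbar/\Q)$ acts on the \emph{finite} set $S$ of isomorphism classes of \Belyi\ maps with passport $P$, and that by definition the size of $P$ is precisely $\#S$.

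First I would set $H = \{\tau \in \Gal(\Qbar/\Q) : f^{\tau} \cong f\}$, the stabilizer of the class of $f$ under this action, so that by the definition of field of moduli we have $K = M(f) = \Qbar^H$. Next, I would note that $H$ is an open (hence closed) subgroup of finite index: since $f$ is defined over $\Qbar$, it is in fact defined over some number field $L$, and then $\Gal(\Qbar/L)$ fixes the defining equations of $f$ and so lies in $H$; as $\Gal(\Qbar/L)$ is open, so is $H$. The orbit--stabilizer theorem then gives that the index $[\Gal(\Qbar/\Q):H]$ equals the cardinality of the orbit of $f$ in $S$, which is at most $\#S$.

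Finally, I would invoke the fundamental theorem of Galois theory for $\Qbar/\Q$: for a closed subgroup of finite index one has $[K:\Q] = [\Qbar^H:\Q] = [\Gal(\Qbar/\Q):H]$. Chaining these together yields
\begin{align*}
  [K:\Q] = [\Gal(\Qbar/\Q):H] \leq \#S,
\end{align*}
which is the size of $P$, as claimed. There is no serious obstacle here; the argument is essentially bookkeeping once Theorem~\ref{thm:galinv} is available. The only point that demands a little care, and which I would make explicit, is the passage through infinite Galois theory: one must verify that $H$ is closed of finite index before equating the field degree with the subgroup index, and this is exactly what the reduction to a number field of definition of $f$ supplies.
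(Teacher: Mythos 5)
Your proof is correct and follows essentially the same route as the paper: the text immediately preceding the proposition uses Theorem \ref{thm:galinv} to get an action of $\Gal(\Qbar/\Q)$ on the finite set of \Belyi\ maps with passport $P$ and then applies orbit--stabilizer to the stabilizer of $f$, whose fixed field is $K$. Your only addition is to make explicit the point from infinite Galois theory (that the stabilizer is closed of finite index, via a number field of definition of $f$), which the paper leaves implicit.
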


As mentioned at the end of Section \ref{sec:backg}, finding better bounds than
in Proposition \ref{prop:merebound} is far from trivial and a subject of ongoing
research.  Experimentally, the bound is often an equality for generic
(non-Galois) \Belyi\ maps.

\subsection*{Rigidified categories}

Working with Galois \Belyi\ maps and the additional structure coming from their
automorphism group naturally leads one to consider a new, more rigidified
category \cite{DebesDouai}. A \defi{$G$-\Belyi\ map} is a pair $(f,i)$, where $f
: X \to \PP^1$ is a Galois \Belyi\ map and $i : G \xrightarrow{\sim} \Mon (f)$
is an isomorphism of the monodromy group of $f$ with $G$. A \defi{morphism} of
$G$-\Belyi\ maps from $(f,i)$ to $(f',i')$ is an isomorphism of \Belyi\ maps
$h:X \xrightarrow{\sim} X'$ that identifies $i$ with $i'$, i.e., such that 
\begin{equation} \label{eqn:higx}
h( i(g) x) = i'(g) h(x) \text{ for all $g \in G$ and $x \in X$}.
\end{equation}

A \defi{$G$-permutation triple} is a triple of permutations $(\sigma_0 ,
\sigma_1 , \sigma_{\infty})$ in $G$ such that $\sigma_0 \sigma_1 \sigma_{\infty}
= 1$ and such that $\sigma_0 , \sigma_1 , \sigma_{\infty}$ generate $G$. A
\defi{morphism} of $G$-permutation triples is an isomorphism of permutation
triples induced by simultaneous conjugation by an element in $G$.  The main
equivalence is now as follows.

\begin{prop}\label{prop:cateqG}
  The following categories are equivalent:
  \begin{enumroman}
    \item $G$-\Belyi\ maps;
    \item $G$-permutation triples;
    \item surjective homomorphisms $F_2 \to G$.
  \end{enumroman}
\end{prop}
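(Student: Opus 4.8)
The plan is to derive Proposition \ref{prop:cateqG} from the equivalence of connected \Belyi\ maps with transitive permutation triples already recorded in Proposition \ref{prop:cateq2}, by carrying along the rigidifying data in each category. I would first dispatch the equivalence (ii) $\Leftrightarrow$ (iii), which is essentially the presentation $F_2 = \langle x,y,z \mid xyz = 1\rangle$: a homomorphism $\phi: F_2 \to G$ is the same datum as the triple $(\phi(x),\phi(y),\phi(z)) \in G^3$, the relation $\sigma_0\sigma_1\sigma_{\infty}=1$ being automatic and surjectivity of $\phi$ being exactly the demand that the triple generate $G$. On morphisms, simultaneous conjugation of a $G$-permutation triple by $g \in G$ corresponds to post-composition of $\phi$ with the inner automorphism $c_g$, so the two categories are identified once one takes morphisms in (iii) to be conjugations by elements of $G$; in particular the automorphism group of a fixed object on either side is the set of elements centralizing the image, namely $Z(G)$.

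The substance is then (i) $\Leftrightarrow$ (iii). In one direction, given a surjection $\phi: F_2 \to G$ I would compose with the left regular representation $\lambda: G \hookrightarrow \Sym(G) \cong S_{\#G}$ to obtain a transitive permutation triple $\lambda \circ \phi$ of degree $d=\#G$; Proposition \ref{prop:cateq2} produces from it a connected \Belyi\ map $f:X \to \PP^1$ of degree $d$ with $\Mon(f)=\lambda(G)$. Since $\lambda(G)$ is a regular subgroup we have $\#\Mon(f)=d$, which by the criterion recalled in Section \ref{sec:backg} is exactly the condition that $f$ be Galois, and $i:=\lambda : G \xrightarrow{\sim} \Mon(f)$ supplies the rigidification, yielding a $G$-\Belyi\ map $(f,i)$. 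Conversely, given $(f,i)$ I would take the monodromy representation $\rho:F_2 \to \Mon(f)$ and set $\phi := i^{-1}\circ\rho : F_2 \to G$, which is surjective because $\rho$ surjects onto $\Mon(f)$. These assignments are visibly inverse to one another on isomorphism classes of objects.

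The delicate point, which I expect to require the most care, is promoting this objectwise bijection to an equivalence of categories, and here the correct reading of the compatibility condition (\ref{eqn:higx}) is essential. Because $f$ is Galois it coincides with its own Galois closure, so $\Mon(f)$ is canonically identified with the deck group $\Aut(X/\PP^1)$ acting on $X$; via $i$ this makes $G$ act on $X$, and the symbol $i(g)x$ in (\ref{eqn:higx}) denotes this action on \emph{points} of $X$ rather than the permutation action on a single fibre. With this understood, a morphism of $G$-\Belyi\ maps is precisely a $G$-equivariant isomorphism of covers, and the equivalence (i) $\Leftrightarrow$ (iii) becomes the principal-bundle (i.e.\ $G$-equivariant) refinement of Proposition \ref{prop:cateq2}: connected covers carrying a simply transitive $G$-action correspond to surjections $F_2 \to G$, with $G$-equivariant isomorphisms matching conjugation by $G$. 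The step to verify is full faithfulness, namely that the ambiguity in $\rho$ coming from relabelling a fibre is absorbed exactly by conjugating $\phi = i^{-1}\circ\rho$ by an element of $G$, with no residual morphisms unaccounted for. A clean consistency check here is that the automorphism group of $(f,i)$ is the centralizer of $i(G)=\Aut(X/\PP^1)$ inside the deck group, namely $Z(G)$, in agreement with the automorphism group $Z(G)$ computed in (ii)–(iii). Essential surjectivity is immediate from the regular-representation construction, and combining this with the tautological equivalence (ii) $\Leftrightarrow$ (iii) yields Proposition \ref{prop:cateqG}.
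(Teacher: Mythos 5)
The paper gives no proof of Proposition \ref{prop:cateqG} at all: it is stated as the evident rigidification of Propositions \ref{prop:cateq1} and \ref{prop:cateq2} (whose own proofs are delegated to Lenstra and the Riemann existence theorem), and the text moves on immediately. Your proposal therefore cannot diverge from ``the paper's route''; what it does is supply the missing argument, and it does so correctly. The (ii) $\Leftrightarrow$ (iii) identification via the presentation $F_2 = \langle x,y,z \mid xyz=1\rangle$ is exactly right, as is the computation that both sides have automorphism group $Z(G)$. For (i) $\Leftrightarrow$ (iii), passing through the left regular representation and invoking the criterion $\#\Mon(f)=d$ for Galois covers is the standard and correct mechanism, and you have put your finger on the genuinely delicate point: the condition \eqref{eqn:higx} only makes sense once $\Mon(f)$ is identified with the deck group $\Aut(X/\PP^1)$ acting on points of $X$, and for a regular subgroup this identification depends on a choice of base point in a fibre.

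The one place where you announce a step rather than carry it out is full faithfulness of (i) $\Leftrightarrow$ (iii). To close it, note that changing the base point $x_0$ to $g_0\cdot x_0$ replaces the bijection $\Mon(f)\to f^{-1}(\ast)$ by its right translate, which conjugates $\rho$ (hence $\phi=i^{-1}\circ\rho$) by the corresponding element of $G$ and nothing more; conversely every morphism of covers commuting with the $G$-action up to the structure maps arises this way by the usual Galois theory of covering spaces. Your $Z(G)$ consistency check confirms that no morphisms are lost or gained. With that sentence added, the argument is complete and is precisely the proof the authors intended the reader to reconstruct.
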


We leave it to the reader to similarly rigidify the notion of dessins; it will
not be needed in what follows.

We will need a slight weakening of this notion in the following section. A
\defi{weak $G$-\Belyi\ map} is a pair $(f,i)$, where $f : X \to \PP^1$ is a
Galois \Belyi\ map and $i : H \hookrightarrow \Mon (f)$ is an isomorphism of the
monodromy group of $f$ with a subgroup $H$ of $G$. A \defi{morphism} of weak
$G$-\Belyi\ maps from $(f,i)$ to $(f',i')$ is an isomorphism of \Belyi\ maps $h:
X \xrightarrow{\sim} X'$ such that \eqref{eqn:higx} holds up to conjugation,
i.e., such that there exists a $t \in G$ such that $h(i(g) x)= i' (t g t^{-1})
h(x)$ for all $g \in G$ and $x \in X$.

A \defi{weak $G$-permutation triple} is a triple of permutations $(\sigma_0 ,
\sigma_1 , \sigma_{\infty})$ in $G$ such that $\sigma_0 \sigma_1 \sigma_{\infty}
= 1$. A \defi{morphism} of weak $G$-permutation triples is an isomorphism of
permutation triples induced by simultaneous conjugation by an element in $G$.
The equivalence of Proposition \ref{prop:cateqG} now generalizes to the
following result.

\begin{prop}\label{prop:cateqGweak}
  The following categories are equivalent:
  \begin{enumroman}
    \item weak $G$-\Belyi\ maps;
    \item weak $G$-permutation triples;
    \item homomorphisms $F_2 \to G$.
  \end{enumroman}
\end{prop}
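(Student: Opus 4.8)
**The plan is to establish the equivalences in Proposition~\ref{prop:cateqGweak} by adapting the proof of the rigidified Proposition~\ref{prop:cateqG}, carefully tracking where the surjectivity (generation) hypothesis was used and replacing it by the weaker morphism relation.**

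First I would set up the functors between the three categories. Given a homomorphism $\rho : F_2 \to G$, mapping the standard generators $x,y,z$ of $F_2$ (as in \eqref{eqn:F2}) to $\sigma_0 = \rho(x), \sigma_1 = \rho(y), \sigma_\infty = \rho(z)$ produces a weak $G$-permutation triple: the relation $\sigma_0\sigma_1\sigma_\infty = 1$ is automatic from $xyz = 1$, and crucially we now \emph{drop} the demand that these generate $G$, matching the definition of weak $G$-permutation triple. Conversely, a weak $G$-permutation triple determines $\rho$ by the universal property of the free group. This gives the equivalence (ii)$\Leftrightarrow$(iii) on objects. For morphisms, a morphism of weak $G$-permutation triples is simultaneous conjugation by some $t \in G$, and on the homomorphism side this corresponds to post-composing $\rho$ with the inner automorphism $\operatorname{Inn}(t)$ of $G$; I would check this is a functorial bijection.

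Next I would treat (i)$\Leftrightarrow$(ii). Starting from a weak $G$-\Belyi\ map $(f,i)$ with $i : H \hookrightarrow \operatorname{Mon}(f)$, the underlying \Belyi\ map $f$ corresponds via Proposition~\ref{prop:cateq1} to a permutation triple, well-defined up to simultaneous conjugacy in $S_d$; transporting this through $i^{-1}$ lands the triple inside $H \leq G$, yielding a weak $G$-permutation triple. The key point distinguishing this from the strong version is the morphism relation: in Proposition~\ref{prop:cateqG} a morphism of $G$-\Belyi\ maps satisfied \eqref{eqn:higx} \emph{on the nose}, and the functoriality forced conjugation by the identity, whereas here \eqref{eqn:higx} is only required to hold \emph{up to conjugation by some $t \in G$}. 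This is exactly what corresponds, under the dictionary, to allowing the conjugating element $t$ in the definition of morphism of weak $G$-permutation triples. I would verify that the two notions of morphism match up precisely, so that the functor (i)$\to$(ii) is fully faithful and essentially surjective.

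The main obstacle I anticipate is bookkeeping the compatibility of the \emph{two} sources of conjugation ambiguity and showing they collapse correctly. On the \Belyi-map side, the monodromy representation is only well-defined up to conjugacy in $S_d$ (as noted after Proposition~\ref{prop:cateq1}); on the rigidified side, the isomorphism $i : H \hookrightarrow \operatorname{Mon}(f)$ together with the permitted conjugation by $t \in G$ introduces a second ambiguity. The delicate step is to confirm that the relation ``$h(i(g)x) = i'(tgt^{-1})h(x)$ for some fixed $t \in G$'' translates exactly into ``simultaneous conjugation by $t$'' on triples, without either over- or under-counting morphisms. Once this single translation is pinned down, the three equivalences follow formally by composing the functors already constructed and invoking Proposition~\ref{prop:cateq1}; no genuinely new geometric input beyond the Riemann existence theorem is needed, since the passage from covers to triples is already furnished by the earlier equivalences.
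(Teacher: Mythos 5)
Your proposal is correct and follows exactly the route the paper intends: the paper states this proposition without proof, presenting it as the generalization of Proposition \ref{prop:cateqG} obtained by dropping the generation hypothesis and relaxing the morphism condition \eqref{eqn:higx} to hold up to conjugation by some $t \in G$, with the underlying cover-to-triple dictionary supplied by Proposition \ref{prop:cateq1}. Your identification of the one delicate point --- that the conjugating element $t$ in the definition of morphisms of weak $G$-\Belyi\ maps corresponds precisely to simultaneous conjugation of triples by $t$ --- is the right place to focus the verification.
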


The set of \Belyi\ maps of degree $d$ can be identified with the set of weak
$S_d$-\Belyi\ maps. In particular, whereas $G$-\Belyi\ maps are always
connected, weak $G$-\Belyi\ maps need not be.  The absolute Galois group
$\Gal(\Qbar/\Q)$ acts on the set of (weak) $G$-\Belyi\ maps, so we can again
define the field of moduli of these rigidified \Belyi\ maps.

Having introduced weak $G$-\Belyi\ maps, it makes sense to consider passports up
to the action of the monodromy group $G \subset S_d$ instead of the full group
$S_d$. We accordingly define the \defi{refined passport} of a (not necessarily
Galois) \Belyi\ map $f:X \to \PP^1$ to be the triple $(g,G,C)$, where $g$ is the
genus of $X$, the group $G$ is the monodromy group of $f$, and $C = (C_0 , C_1 ,
C_{\infty})$ are the conjugacy classes of $\sigma_0,\sigma_1,\sigma_{\infty}$ in
$G$, not the conjugacy classes in $S_d$ included in the (usual) passport. 

Fried \cite{Fried} shows how the conjugacy classes $C_i$ change under the Galois
action. Let $\sigma \in \Gal (\Qbar / \Q)$, let $n = \#G$, and let $\zeta_n
\in \Qbar$ be a primitive $n$-th root of unity. Then $\sigma$ sends $\zeta_n$ to
$\zeta_n^a$ for some $a \in (\Z / n \Z)^{\times}$. We obtain new 
conjugacy classes $C_i^{a}$ by raising a representative of
$C_i$ to the $a$th power. Then for any character $\chi$ of $G$ we have
\begin{align}
  \sigma ( \chi (C_i) ) = \chi (C_i^{a})  .
\end{align}
Let $\Q (\chi (C_i))$ be the field generated by the character values of the
conjugacy classes $C_i$. We have $\Q (\chi (C_i)) = \Q$ if and only if all conjugacy
classes of $G$ are rational, as for instance in the case $G = S_d$.  

\begin{prop}
  Let $(f,i)$ be a weak $G$-\Belyi\ map with refined passport $R$ and field of
  moduli $K$ as a weak $G$-\Belyi\ map. Then the degree $[K : \Q (\chi (C_i))]$
  is bounded above by the size of $R$.
\end{prop}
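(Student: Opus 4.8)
The plan is to mimic the argument sketched for Proposition \ref{prop:merebound}, but to work over the base field $\Q(\chi(C_i))$ rather than $\Q$ and to use the refined passport $R$ in place of the ordinary passport. The crucial new input is Fried's transformation law quoted above: if $\sigma \in \Gal(\Qbar/\Q)$ satisfies $\sigma(\zeta_n) = \zeta_n^{a}$ with $n = \#G$, then the refined passport of the conjugate $(f,i)^{\sigma}$ has conjugacy classes $C_i^{a}$, and $\sigma(\chi(C_i)) = \chi(C_i^{a})$ for every character $\chi$ of $G$. All three $C_i$ transform simultaneously by the same exponent $a$.

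First I would set $H = \Gal(\Qbar/\Q(\chi(C_i)))$, the subgroup fixing the field of character values, so that by Galois theory $\Q(\chi(C_i))$ is the fixed field of $H$. For $\sigma \in H$ we have $\chi(C_i^{a}) = \sigma(\chi(C_i)) = \chi(C_i)$ for every character $\chi$; since the irreducible characters separate conjugacy classes, this forces $C_i^{a} = C_i$ for each $i$. Hence $(f,i)^{\sigma}$ has the same refined passport $R$ as $(f,i)$, so $H$ acts on the finite set $S_R$ of weak $G$-\Belyi\ maps with refined passport $R$, whose cardinality $\#S_R$ is the size of $R$.

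Next I would verify the inclusion of stabilizers that compares the two base fields. Write $\operatorname{Stab}(f,i) = \{\sigma : (f,i)^{\sigma} \cong (f,i)\}$, whose fixed field is $K$. If $\sigma \in \operatorname{Stab}(f,i)$, then in particular the refined passport is preserved, so $C_i^{a} = C_i$, and Fried's relation gives $\sigma(\chi(C_i)) = \chi(C_i^{a}) = \chi(C_i)$; thus $\sigma \in H$. Therefore $\operatorname{Stab}(f,i) \subseteq H$, and by the Galois correspondence for the (possibly infinite) extension $\Qbar/\Q$ the field $K$ contains $\Q(\chi(C_i))$, with
\begin{align*}
  [K : \Q(\chi(C_i))] = [H : \operatorname{Stab}(f,i)].
\end{align*}
Because $\operatorname{Stab}(f,i)$ is contained in $H$, it coincides with the stabilizer of $(f,i)$ for the $H$-action on $S_R$.

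Finally, the orbit-stabilizer theorem applied to the action of $H$ on $S_R$ identifies this index $[H : \operatorname{Stab}(f,i)]$ with the size of the $H$-orbit of $(f,i)$, which is at most $\#S_R$, the size of $R$. This yields the asserted bound. The only delicate step is establishing that conjugation by $\sigma$ raises each $C_i$ to the uniform power $a$ (equivalently, that the refined passport of $(f,i)^{\sigma}$ is the power-map image of $R$); granting Fried's result exactly as quoted in the excerpt, the remainder is a routine application of orbit-stabilizer together with infinite Galois theory, strictly parallel to the proof of Proposition \ref{prop:merebound}.
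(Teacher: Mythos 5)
Your proof is correct and follows exactly the route the paper intends: the paper leaves this proposition unproved, but it is set up by Fried's transformation law and the orbit--stabilizer argument already used for Proposition \ref{prop:merebound}, now run over the base field $\Q(\chi(C_i))$ (whose absolute Galois group preserves the refined passport because irreducible characters separate conjugacy classes). The inclusion $\operatorname{Stab}(f,i) \subseteq \Gal(\Qbar/\Q(\chi(C_i)))$ and the resulting identification of $[K:\Q(\chi(C_i))]$ with an orbit size are precisely the implicit content of the paper's argument.
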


Calculating in the category of weak $G$-\Belyi\ maps can be useful even when
considering \Belyi\ maps without this additional structure. More precisely, this
is useful when using formulas that approximate the size of a passport. To this
end, let $G$ be a finite group and let $C_0 , C_1 , C_{\infty}$ be conjugacy
classes in $G$. Let $S$ be the set of isomorphism classes of weak $G$-\Belyi\
maps $\sigma = (\sigma_0 , \sigma_1 , \sigma_{\infty})$ with the property that
$\sigma_i \in C_i$ for $i \in \left\{ 0,1,\infty \right\}$. Then a formula that
goes back to Frobenius (see Serre \cite[Theorem 7.2.1]{Serre}) shows that
\begin{align}\label{eq:serre}
  \sum_{(f,i) \in S} \frac{1}{\Aut_G (f,i)} = \frac{\#C_0 \#C_1 
  \# C_{\infty}}{(\#G)^2} \sum_{\chi} \frac{\chi (C_0) \chi (C_1) \chi
  (C_{\infty})}{\chi (1)}.
\end{align}
Here the automorphism group $\Aut_G (f,i)$ is the group of automorphisms of
$(f,i)$ as a weak $G$-\Belyi\ map. The sum on the left of (\ref{eq:serre}) runs
over all weak \Belyi\ maps with the aforementioned property; in particular, one
often obtains non-transitive solutions that one does not care about in practice.

When working with mere \Belyi\ maps (without rigidification as a weak
$G$-\Belyi\ map), it can still be useful to consider the estimate
(\ref{eq:serre}) when the monodromy group of the \Belyi\ map is question is
included in $G$.  We illustrate this by a few concrete examples.

\begin{exm}\label{ex:serreest}
  We start by taking $G$ to be a full symmetric group and give the
  above-mentioned estimate for the number of genus $0$ \Belyi\ maps with
  ramification passport 
  \begin{align*}
    (0, (3^2 2^3 , 5^1 4^1 2^1 1^1 , 6^1 4^1 2^1 )),
  \end{align*}
  Before giving it, we calculate the possible permutation triples up to
  conjugacy directly using Lemma \ref{lem:doubcos}. This shows that the number
  of solutions is $583$, of which $560$ are transitive. The transitive solutions
  all have monodromy group $S_{23}$ and hence trivial automorphism group. On the
  other hand, the Serre estimate (\ref{eq:serre}) equals $567 \frac{1}{4}$,
  which more precisely decomposes as
  \begin{align*}
    567 \frac{1}{4} = \frac{560}{1} + \frac{1}{1} + \frac{3}{2} +
    \frac{19}{4};
  \end{align*}
  of the 23 nontransitive solutions, there is only one with trivial automorphism
  group, whereas there are $3$ (resp.\ $19$) with automorphism group of
  cardinality $2$ (resp.\ $4$).  For each of the nontransitive solutions, the
  associated \Belyi\ maps are disjoint unions of curves of genus $1$, such as
  those corresponding to the products of the genus $1$ \Belyi\ maps with
  ramification types $(2^3,5^1 1^1,6^1)$ (which always have trivial automorphism
  group) with those with ramification types $(3^2,4^1 2^1 ,4^1 2^1 )$ (which
  have either $1$ or $2$ automorphisms, depending on the solution).
\end{exm}

\begin{exm}\label{ex:mathest}
  Another example is the case $(0, H, (4^4 2^2 1^3 , 4^4 2^2 1^3 , 5^4 1^3))$
  with $H \leq M_{23}$. We can identify $M_{23}$-conjugacy classes with
  $S_{23}$-conjugacy classes for these groups, as the conjugacy classes of
  $S_{23}$ do not split upon passing to $M_{23}$.
  
  The calculations are much more rapid working with $M_{23}$ than for the full
  group $S_{23}$. We obtain the estimate $909$, which fortunately enough equals
  the exact number of solutions because the corresponding subgroups of $M_{23}$
  all have trivial centralizer; this is not the case when they are considered as
  subgroups of $S_{23}$. Of these many solutions, it turns out that only $104$
  are transitive.  

  As mentioned before, this estimate only gives the number of weak
  $M_{23}$-\Belyi\ maps; accordingly, permutation triples are only considered
  isomorphic if they are conjugated by an element of $M_{23}$ rather than
  $S_{23}$. However, since $M_{23}$ coincides with its own normalizer in
  $S_{23}$, this coincides with the number of solutions under the usual
  equivalence. Directly working with the group $M_{23}$ indeed saves a great
  deal of computational overhead in this case.
\end{exm}

An explicit (but complicated) formula, using M\"obius inversion to deal with the
disconnected \Belyi\ maps, was given by Mednykh \cite{Mednykh} this vein; in
fact, his formula can be used to count covers with specified ramification type
of an arbitrary Riemann surface.

Finally, we mention that in the Galois case, the situation sometimes simplifies:
there are criteria \cite{ConderJonesStreitWolfart, JonesStreitWolfart} for
Galois \Belyi\ maps to have cyclotomic fields of moduli, in which case the
Galois action is described by a simple powering process known as \defi{Wilson's
operations}. Additionally, Streit--Wolfart \cite{StreitWolfart} have calculated
the field of moduli of an infinite family of \Belyi\ maps whose Galois group is
a semidirect product $\Z_p \rtimes \Z_q$ of cyclic groups of prime order.

\subsection*{Field of moduli versus field of definition}

We have seen that in all the categories of objects over $\Qbar$ considered so
far (curves, \Belyi\ maps, etc.)\ there is a field of moduli for the action of
$\Gal(\Qbar/\Q)$.  Given an object $Y$ of such a category with field of moduli
$M$, it is reasonable to ask whether $Y$ is \defi{defined over $M$}, i.e., if
there exists an object $Y_M$ in the appropriate category over $M$ that is
isomorphic with $Y$ over $\Qbar$, in which case $M$ is said to be a \defi{field
of definition} of $Y$. For example, if $Y = (X,f)$ is a \Belyi\ map over
$\Qbar$, this means that there should exist a curve $X_M$ over $M$ and a \Belyi\
map $f_M : X_M \rightarrow \PP_M^1$ such that $(X,f)$ can be obtained from
$(X_M,f_M)$ by extending scalars to $\Qbar$.

We first consider the case of curves. Curves of genus at most $1$ are defined
over their field of moduli. But this ceases to be the case for curves of larger
genus in general, as was already observed by Earle \cite{Earle} and Shimura
\cite{Shimura}. The same is true for \Belyi\ maps and $G$-\Belyi\ maps. This
issue is a delicate one, and for more information, we refer to work of
Coombes--Harbater \cite{CoombesHarbater}, D\`{e}bes--Ensalem
\cite{DebesEmsalem}, D\`{e}bes--Douai, \cite{DebesDouai}, and K\"ock
\cite{Kock}.

The obstruction can be characterized as a lack of rigidification. For example, a
curve furnished with an embedding into projective space is trivially defined
over its field of moduli (as a projectively embedded curve).  Additionally,
marking a point on the source $X$ of a \Belyi\ map and passing to the
appropriate category, \cite[Theorem 2]{Birch} states that the field of moduli is
a field of definition for the \defi{pointed \Belyi\ curve} \cite[Theorem
2]{Birch}; however, this issue seems quite subtle, and in \cite{Kock} only
auxiliary points with trivial stabilizer in $\Aut (X)$ are used. Note that the
more inclusive version of this rigidification (with possibly non-trivial
stabilizer) was considered in Section \ref{sec:mod} (see e.g.\
\eqref{eq:G2corresp}).  As mentioned at the beginning of the previous
subsection, this implication can then be applied to give an upper bound on the
degree of the field of definition of a \Belyi\ map, an important bit of
information needed when for example applying LLL to recognize coefficients
algebraically.

Note that for a \Belyi\ map $f : X \rightarrow \PP^1$, the curve $X$ may descend
to its field of moduli in the category of curves while $f$ does not descend to
this same field of moduli in the category of \Belyi\ maps. Indeed, this can be
seen already for the example $X = \PP^1$, as $\Gal(\Qbar/\Q)$ acts faithfully on
the set of genus $0$ dessins.  In general, the problem requires careful
consideration of obstructions that lie in certain Galois cohomology groups
\cite{DebesDouai}.

\begin{rmk}
  Although in general we will have to contend with arbitrarily delicate
  automorphism groups, Couveignes \cite{CouveignesAPropos} proved that every
  curve defined over a number field $K$ admits a \Belyi\ map without
  automorphisms defined over $K$.  This map will then necessarily not be
  isomorphic to any of its proper conjugates.
\end{rmk}

On top of all this, a \Belyi\ map may descend to its field of moduli \defi{in
the weak sense}, i.e., as a cover of a possibly non-trivial conic ramified above
a Galois-stable set of three points, rather than \defi{in the strong sense}, as
a cover of $\PP^1 \backslash \left\{ 0,1,\infty \right\}$ (i.e., in the category
of \Belyi\ maps over the field of moduli).  This distinction also measures the
descent obstruction for hyperelliptic curves, as in work of
Lercier--Ritzenthaler--Sijsling \cite{LRS}. For \Belyi\ maps, a deep study of
this problem in genus $0$, beyond the general theory, was undertaken by
Couveignes \cite[\S\S 4--7]{CouveignesCRF}: he shows that for the \defi{clean
trees}, those \Belyi\ maps with a single point over $\infty$ and only
ramification index $2$ over $1$, on the set of which $\Gal (\Qbar / \Q)$ acts
faithfully, the field of moduli is always a field of definition in the strong
sense. Moreover, he shows that in genus $0$, the field of moduli is always a
field of definition in the weak sense as long as the automorphism group of the
\Belyi\ map is not cyclic of even order, and in the strong sense as long as the
automorphism group is not cyclic.

These considerations have practical value in the context of computations. For
example, Couveignes \cite[\S 10]{CouveignesCRF} first exhibits a genus $0$
\Belyi\ map that descends explicitly to $\Q$ in the strong sense.  Then, due to
the presence of non-trivial automorphisms of this \Belyi\ map, one can realize
it as a morphism $f : C \to \PP^1$ for infinitely many mutually non-isomorphic
conics $C$ over $\Q$. And by choosing $C$ appropriately (not isomorphic to
$\PP^1$ over $\Q$), Couveignes manages to condense his equations from half a
page to a few lines.  Further simplification techniques will be considered in
the next section.

We mention some results on the field of moduli as a field of definition that are
most useful for generic ($G$-)\Belyi\ maps.
\begin{enumerate}
  \item If a curve or ($G$-)\Belyi\ map has trivial automorphism group, then it
    can be defined over its field of moduli, by Weil's criterion for descent
    \cite{Weil}.
  \item If the center of the monodromy group of a Galois \Belyi\ map is trivial, then
    it can be defined over its field of moduli by the main result in the article
    by D\`ebes--Douai \cite{DebesDouai}.
  \item A $G$-\Belyi\ map, when considered in the category of \Belyi\ maps
    (without extra structure) is defined over its field of moduli as a \Belyi\
    map \cite{CoombesHarbater}.
\end{enumerate}

To give an impression of the subtleties involved, we further elaborate on
Example \ref{exm:gal1} from the previous section. Along the way, we will
illustrate some of the subtleties that arise when considering fields of moduli.
As we will see, these subtleties correspond with very natural questions on the
level of computation.

\begin{exm}
  Since $\Delta(3,5,5)$ is a subgroup of $\Delta(2,5,6)$ of index $2$, we also obtain
  from this example a \Belyi\ map with indices $3,5,5$ for the group $A_5$ by
  taking the corresponding quotient.  Indeed, ramification can only occur over
  the points of order $2$ and $6$, which means that in fact the cover is a
  cyclic degree $2$ map of conics ramifying of order $2$ over these points and
  under which the point of order $5$ has two preimages.  An equation for this
  cover (which is a \Belyi\ map) can now be found by drawing an appropriate
  square root of the function $(s_3^5 / s_5^3) + (15/2)^2$ (which indeed
  ramifies of order $6$ over $\infty$ and of order $2$ over $0$) and sending the
  resulting preimages $\pm 15/2$ of the point of order $5$ to $0$ and $1$,
  respectively.

  Alternatively, we can calculate as follows.  The full ring of invariant
  homogeneous polynomials for $A_5$ (acting linearly by permutation of
  coordinates) is generated by the power sums
  $p_1 , \dots , p_5$ and the \defi{Vandermonde polynomial}
  \begin{align*}
    a = (v - w) (v - x) (v - y) (v - z) (w - x) (w - y) (w - z) (x - y) (x - z)
    (y - z)  .
  \end{align*}
  One easily determines the expression for $a^2$ in terms of the $p_i$; setting
  $p_1 = p_2 = p_4 = 0$, we get the relation
  \begin{align*}
    a^2 = \frac{4}{45} s_3^5 s_5 + 5 s_5^3  .
  \end{align*}
  This suggests that to get a function realizing the quotient $X \to X / A_5$,
  we take the map $g : X \rightarrow C$, where $C$ is the conic
  \begin{align*}
    C : 45y^2 = 4x z + 225 z^2
  \end{align*}
  and $g$ is given by
  \begin{align*}
    g(v,w,x,y,z) = (s_3^5 : a s_5 : s_5^3)  .
  \end{align*}
  Note that $Q$ admits the rational point $(1:0:0)$.

  This result is not as strong as one would like. As we have seen when
  calculating the full quotient $f$, the branch points of $g$ of order $5$ on
  $C$ satisfy $(x : z) = (0 : 1)$. But the corresponding points are only defined
  over $\Q (\sqrt{5})$, so this is a descent of a \Belyi\ map in the weak sense.
  We explain at the group-theoretical level what other kinds of descent can be
  expected.

  There are actually two Galois covers with ramification indices $(3,5,5)$ for
  $A_5$ up to isomorphism. The other cover is not found as a subcover of $f$;
  when composing with the same quadratic map, we instead get a Galois \Belyi\
  map whose Galois group is the direct product of $A_5$ and $\Z / 2 \Z$. The
  corresponding curve is given by taking the hyperelliptic cover ramified over
  the vertices of an icosahedron, leading to the equation
  \begin{align*}
    t^2 = s^{20} + 228 s^{15} + 494 s^{10} - 228 s^5 + 1  .
  \end{align*}

  In particular, this means that the Galois orbit of these covers consists of a
  single isomorphism class, as their monodromy groups upon composition differ
  \cite{Wood}. As mentioned above, an $A_5$-\Belyi\ map, considered as a mere
  \Belyi\ map, is defined over its field of moduli as a \Belyi\ map, so our
  equations above can be twisted to a \Belyi\ map over $\Q$, that is, with
  ramification at three rational points.

  However, the Galois cover does \emph{not} descend as an $A_5$-\Belyi\ map (so
  in the strong sense, as a Galois cover unramified outside $\left\{ 0,1,\infty
  \right\}$). Indeed, the character table of $A_5$ is only defined over $\Q
  (\sqrt{5})$.  Twisting may therefore give a cover defined over $\Q$, but the
  Galois action will then only be defined over $\Q (\sqrt{5})$ and be
  accordingly more complicated. We therefore forgo this calculation and content
  ourselves with the symmetric form above.
\end{exm}

For more on the questions considered in this section, see also further work by
Couveignes \cite{CouveignesQuelques}, and in a similar vein, the work of van
Hoeij--Vidunas on covers of conics \cite[\S\S 3.3--3.4]{vHV}, \cite[\S 4]{vHV2}.
We again refer to the fundamental paper of D\`ebes--Douai \cite{DebesDouai}, in
which strong results are given for both \Belyi\ maps and $G$-\Belyi\ maps that
suffice in many concrete situations.  Admittedly, this subject is a delicate one,
and we hope that computations will help to further clarify these nuances.

\section{Simplification and verification}\label{sec:veri} 

Once a potential model for a \Belyi\ map has been computed, it often remains to
simplify the model as much as possible and to verify its correctness
(independently of the method used to compute it). The former problem is still
open in general; the latter has been solved to a satisfactory extent.

\subsection*{Simplification}

By \defi{simplifying} a \Belyi\ map $f : X \to \PP^1$, we mean to reduce the
total (bit) size of the model.  Lacking a general method for doing this, we
focus on the following:
\begin{enumerate}
  \item If $X$ is of genus $0$, we mean to find a coordinate on $X$ that decreases
    the (bit) size of the defining coefficients of $f$.
  \item If $X$ is of strictly positive genus, we mean to simplify the defining
    equations for $X$. (In practice, this will also lead to simpler coefficients
    of the \Belyi\ map $f$.)
\end{enumerate}

Problem (1) was considered by van Hoeij--Vidunas \cite[\S 4.2]{vHV} under
the hypotheses that one of the ramification points has a minimal polynomial of
degree at most $4$; one tries to find a smaller polynomial defining the
associated number field and changes the coordinate accordingly, which typically
yields one a simpler expression of the \Belyi\ map.

Problem (1) is directly related with Problem (2) for hyperelliptic curves, since
simplifying the equations for hyperelliptic curves over a field $K$ boils down
to finding a small representative of the $\GL_2 (K)$-orbit of a binary form.
Typically one also requires the defining equation to have integral coefficients.
For the case $K = \Q$, this leads one to consider the problem of finding simpler
representations for binary forms under the action of the group of integral
matrices $\SL_2 (\Z)$.  This is considered by Cremona--Stoll
\cite{CremonaStoll}, using results from Julia \cite{Julia} to find a binary
quadratic covariant, to which classical reduction algorithms are then applied.
The resulting algorithms substantially reduce the height of the coefficients of
the binary form in practice, typically at least halving the bit size of already
good approximations in the applications \cite{CremonaStoll}. A generalization
to, and implementation for, totally real fields is given in Bouyer--Streng
\cite{BouyerStreng}. 

In fact, corresponding results for the simplification of \Belyi\ maps can be
obtained by taking the binary form to be the product of the numerator and
denominator of the \Belyi\ map. That the resulting binary form may have double
roots and hence may not correspond to hyperelliptic curves is no problem; see
the discussion by Cremona--Stoll \cite[after Proposition 4.5]{CremonaStoll}.

This problem of reduction is intimately related with the problem of finding a
good model of a \Belyi\ map or hyperelliptic curve over $\Z$. Note that even for
the case $K = \Q$ we have not yet used the full group $\GL_2 (\Q)$; the
transformations in $\SL_2 (\Z)$ considered by Cremona and Stoll preserve the
discriminant, but it could be possible that a suitable rational transformation
decreases this quantity while still preserving integrality of the binary form.
An approach to this problem is given by Bouyer--Streng \cite[\S
3.3]{BouyerStreng}.

In general, Problem (2) is much harder, if only because curves of high genus
become more difficult to write down. 

\begin{ques}
  Are there general methods to simplifying equations of curves defined over a
  number field in practice?
\end{ques}

\subsection*{Verification}

Let $f:X \to \PP^1$ be a map defined over a number field $K$ of degree $d$ that
we suspect to be a \Belyi\ map of monodromy group $G$, or more precisely to
correspond to a given permutation triple $\sigma$ or a given dessin $D$. To show
that this is in fact the case, we have to verify that
\begin{enumroman}
  \item $f$ is indeed a \Belyi\ map;
  \item $f$ has monodromy group $G$; and
  \item $f$ (or its monodromy representation) corresponds to the permutation
    triple $\sigma$; or
  \item[(iii)'] the pullback under $f$ of the closed interval $[0,1]$ is
    isomorphic (as a dessin) to $D$.
\end{enumroman}
This verification step is necessary for all known methods, and especially when
using the direct method from Section \ref{sec:groeb}; the presence of parasitic
solutions means that not even all solutions of the corresponding system of
equations will be \Belyi\ maps, let alone \Belyi\ maps with correct monodromy
group or pullback.

Point (i) can be computationally expensive, but it can be accomplished, by
using the methods of computational algebraic geometry.  Not even if $X = \PP^1$ is this
point trivial, since although verifying that a \Belyi\ map is returned is
easy for dessins of small degree, we need better methods than direct
factorization of the polynomials involved as the degree mounts.

As for point (ii), one simple sanity check is to take a field of definition $K$
for $f$ and then to substitute different $K$-rational values of $t \not\in
\{0,1,\infty\}$. One obtains an algebra that is again an extension of $K$ of
degree $d$ and whose Galois group $H$ must be a subgroup of the monodromy group
$G$ by an elementary specialization argument.  So if we are given a finite
number of covers, only one of which has the desired monodromy group $G$, then to
eliminate a cover in the given list it suffices to show that specializing this
cover gives a set of cycle type in $H$ that is not contained in the given
monodromy group $G$ when considered as a subgroup of $S_d$. Such cycle types can
be obtained by factoring the polynomial modulo a small prime of $K$. 

There are many methods to compute Galois groups effectively in this way; a
general method is given by Fieker--Kl\"uners \cite{FiekerKlueners}. This method
proceeds by computing the maximal subgroups of $S_d$ and checking if the Galois
group lies in one of these subgroups by evaluating explicit invariants.  This
method works well if $G$ has small index in $S_d$.  Iterating, this allows on to
compute the monodromy group of a \Belyi\ map explicitly instead of merely giving
the maximal groups in which is it included. To this end, one may work modulo a
prime $\frakp$ of good reduction, and in light of Beckmann's Theorem, we may
still reasonably expect a small prime of the ring of integers of $K$ that is
coprime with the cardinality $\#G$ of the monodromy group to do the job.

Second, one can compute the monodromy by using numerical approximation.  This
has been implemented by van Hoeij \cite{vanHoeij}, though one must be very
careful to do this with rigorous error bounds.  This idea was used by Granboulan
\cite{Granboulan} in the computation of a cover with Galois group $M_{24}$,
first realized (without explicit equation) by Malle--Matzat
\cite[III.7.5]{MalleMatzat}.  In particular, Schneps \cite[\S III.1]{Schneps}
describes a numerical method to draw the dessin itself, from which one can read
off the mondromy.  This method is further developed by Bartholdi--Buff--von
Bothmer--Kr\"oker \cite{Bartholdi}, who lift a Delaunay triangulation
numerically and read off the permutations by traversing the sequence of edges
counterclockwise around a basepoint. In particular this solves (iii): if we
express each of the complex solutions obtained by embedding $K \hookrightarrow
\C$, we may also want to know which cover corresponds to which permutation
triple up to conjugation.

A third and final method is due to Elkies \cite{ElkiesM23}, who uses an
effective version (due to Weil's proof of the Riemann hypothesis for curves over
finite fields) of the Chebotarev density theorem in the function field setting.
This was applied to distinguish whether the Galois group of a given cover was
equal to $M_{23}$ or $A_{23}$. More precisely, one relies on reduction modulo a
prime whose residue field is prime of sufficiently large characteristic (in his
case, $>10^9$) and uses the resulting distribution of cycle structures to deduce
that the cover was actually $M_{23}$.  This method has the advantage of using
exact arithmetic and seems particularly well-suited to verify monodromy of large
index in $S_d$.

\section{Further topics and generalizations}\label{sec:gens}

This section discuss some subjects that are generalizations of or otherwise
closely related with \Belyi\ maps. At the end, we briefly discuss the theoretical
complexity of calculating \Belyi\ maps.

\subsection*{Generalizations}

Over $\overline{\F}_p$, one can consider the reduction of \Belyi\ maps from
characteristic $0$; this is considered in Section \ref{sec:padic} above.
Switching instead to global function fields might be interesting, especially if
one restricts to tame ramification and compares with the situation in
characteristic $0$.  As a generalization of \Belyi's theorem, over a perfect
field of characteristic $p>0$, every curve $X$ has a map to $\PP^1$ that is
ramified only at $\infty$ by work of Katz \cite{Katz}. But this map is
necessarily wildly ramified at $\infty$ if $g(X) > 0$, so the corresponding
theory will differ essentially from that of \Belyi\ maps over $\Qbar$.

If we view \Belyi's theorem as the assertion that every curve over a number
field is an \'etale cover of $\PP^1 \setminus \{0,1,\infty\} \cong \calM_{0,4}$,
the moduli space of genus $0$ curves with $4$ marked points, then \Belyi's
result generalizes to a question by Braungardt \cite{Braungardt}: is every
connected, quasi-projective variety $X$ over $\Qbar$ birational to a finite
\'etale cover of some moduli space of curves $\calM_{g,n}$?  Easton and Vakil
also have proven that the absolute Galois group acts faithfully on the
irreducible components of the moduli space of surfaces \cite{EastonVakil}.
Surely some computations in small dimensions and degree will be just as
appealing as in the case of \Belyi\ maps.  

As mentioned on a naive level in Remarks \ref{rmk:naive1} and \ref{rmk:naive2},
another more general way to look at \Belyi\ maps is through the theory of
\defi{Hurwitz schemes}, which give a geometric structure to the set
$\calH_{n,r}(\Qbar)$ parametrizing degree $n$ morphisms to $\PP^1$ over $\Q$
that are ramified above $r$ points.  The theorem of \Belyi\ then amounts to
saying that by taking the curve associated to a morphism, one obtains a
surjective map from the union of the $\overline{\Q}$-rational points of the spaces $\calH_{n,3}$ to the union of the
$\overline{\Q}$-rational points of the moduli spaces of curves $\calM_g$ of
genus $g$. We refer to work of Romagny--Wewers \cite{RomagnyWewers} for a more
complete account. 

\subsection*{Origamis}

One generalization of \Belyi\ maps is given by covers called \defi{origamis}: covers
of elliptic curves that are unramified away from the origin.  For a more
complete account on origamis, see Herrlich--Schmith\"usen
\cite{HerrlichSchmithuesen}; \Belyi\ maps can be obtained from origamis by a
degeneration process \cite[\S 8]{HerrlichSchmithuesen}.

The reasons for considering origamis are many. First, the fundamental group of
an elliptic curve minus a point is analogous to that of the Riemann sphere, in
that it is again free on two generators. The ramification type above the origin
is now given by the image of the commutator of these two generators. The local
information at this single point of ramification reflects less information about
the cover than in the case of \Belyi\ maps. Additionally, the base curve can be
varied, which makes the subject more subtle, as Teichm\"uller theory makes its
appearance.

An exciting family of special origamis was considered by Anema--Top
\cite{AnemaTop}: they consider the elliptic curve $E : y^2 = x^3 + a x + b$ over
the scheme $B : 4 a^3 + 27 b^2 = 1$ defined by the constant non-vanishing
discriminant $1$ of $E$.  Considering the torsion subschemes $E[n]$ over $B$,
one obtains a family of covers over the base elliptic curve $B$ of $j$-invariant
$0$ that is only ramified above the point at infinity and whose Galois groups
are subgroups of special linear groups. It would be very interesting to deform
this family to treat the case of arbitrary base curves, though it is not clear
how to achieve this.

\begin{ques}
  How does one explicitly deform special origamis to families with arbitrary
  base curves?
\end{ques}

Explicit examples of actual families of origamis were found by
Rubinstein-Salzedo \cite{RS1,RS2}. In particular, by using a deformation
argument starting from a nodal cubic, he obtains a family of hyperelliptic
origamis that are totally ramified at the origin. For the case of degree $3$,
this gives a unique cover of genus $2$. More precisely, starting with an
elliptic curve $E$ with full $2$-torsion in Legendre form
\begin{align*}
  y^2 = x (x - 1) (x - \lambda)  ,
\end{align*}
the hyperelliptic curve
\begin{align*}
  y^2 = \frac{1}{2}\left(-4 x^5 + 7 x^3 - (2 \lambda - 1) x^2 - 3 x + (2 \lambda
  - 1) \right)  
\end{align*}
admits a morphism to $E$ given by
\begin{align*}
  \left( x,y\right) \longmapsto \left( \frac{1}{2} \left( -4 x^3 + 3 x + 1
  \right) , \frac{y}{2} \left(-4 x^2 + 1 \right)  \right)
\end{align*}
that is only ramified at the points at infinity of these curves.

It is important to note here that the field of moduli of these covers is an
extension of the field of moduli of the base elliptic curve; more precisely, as
suggested by the formulas above, this field of moduli is exactly the field
obtained by adjoint the $2$-torsion of the curve. This is a variation on a
result in Rubinstein-Salzedo \cite{RS1}, where simpler expressions for similar
covers are found in every degree. Amusingly enough, adjoining the full
$2$-torsion of the base curves always suffices to define these covers.  This
result is appealing and quite different from the corresponding situation for
\Belyi\ maps, and therefore we ask the following question.

\begin{ques}
  Which extension of the field of moduli is needed to define similar covers
  totally ramified above a single point for general curves?
\end{ques}

\subsection*{Specialization}

Covering maps of the projective line with more than 3 ramification points
specialize to \Belyi\ maps by having the ramification points coincide.  In many
cases, the covers in the original spaces are easier to compute, and this
limiting process will then lead to some non-trivial \Belyi\ maps.  This also
works in reverse, and provides another application of computing \Belyi\ maps.
Hallouin--Riboulet-Deyris \cite{HallouinModuli} explicitly computed
polynomials with Galois group $A_n$ and $S_n$ over $\Q(t)$ with four branch
points for small values of $n$; starting from a relatively simple ``degenerate''
three-point branched cover, the four-point branched cover is obtained by complex
approximation (using Puiseux expansions). These methods were considerably
augmented by Hallouin in \cite{HallouinStudy} to find another such family with
group $\PSL_2 (\F_8)$. More recently, K\"onig \cite{Konig} similarly computed
such an extension of $\Q (t)$ with Galois group $\PSL_5(\F_2)$, using a $p$-adic
approximation to calculate the initial three-point degeneration. In all
aforementioned cases, the resulting covers can be specialized to find explicit
solution to the inverse Galois problem for the groups involved, and as mentioned
at the end of Section \ref{sec:backg}, the results from \cite{HallouinStudy}
have also found an application in the determination of equations for Shimura
curves \cite{HallouinComputation}.


As mentioned in Section \ref{sec:coan}, Couveignes \cite{CouveignesTools} has
used a patching method to describe more generally the computation of families of
ramified branch covers, using a degeneration to the situation of three-point
covers. More extensive algorithmic methods to deal with this question should
therefore be in reach of the techniques of numerical algebraic geometry.

\subsection*{Complexity}

In this article, we have been primarily concerned with practical methods for
computing \Belyi\ maps; but we conclude this section by posing a question
concerning the theoretical complexity of this task.

\begin{ques}
  Is there an algorithm that takes as input a permutation triple and produces as
  output a model for the corresponding \Belyi\ map over $\Qbar$ that runs in
  time doubly exponential in the degree $n$?  
\end{ques}

There is an algorithm (without a bound on the running time) to accomplish this
task, but it is one that no one would ever implement: there are only countably
many \Belyi\ maps, so one can enumerate them one at a time in some order and use
any one of the methods to check if the cover has the desired monodromy.  It
seems feasible that the Gr\"obner method would provide an answer to the above
problem, but this remains an open question.  Javanpeykar \cite{Javanpeykar} has
given explicit bounds on the Faltings height of a curve in terms of the degree
of a \Belyi\ map; in principle, this could be used to compute the needed
precision to recover the equations over a number field.

\end{document}